\newcommand\blfootnote[1]{%
  \begingroup
  \renewcommand\thefootnote{}\footnote{#1}%
  \addtocounter{footnote}{-1}%
  \endgroup
}
\newcommand{\red}{\textcolor{black}}
\newcommand{\cB}{{\mathcal B}}
\newcommand{\cP}{{\mathcal P}}
\newcommand{\cT}{\mathcal T_1}
\newcommand{\cD}{{\mathcal D}}
\newcommand{\0}{\mathit 0_\mathcal B}
\theoremstyle{definition}
\newtheorem{definition}{Definition}
\newtheorem{example}{Example}
\newtheorem{remark}{Remark}
\theoremstyle{plain}
\newtheorem{lemma}{Lemma}
\newtheorem{theorem}{Theorem}
\newtheorem{proposition}{Proposition}
\newtheorem{corollary}{Corollary}
\title{Combinatorial perspectives on Dollo-$k$ characters in phylogenetics}
\author[1]{Remco Bouckaert}
\author[2,$\ast$]{Mareike Fischer}
\author[3]{Kristina Wicke}
\affil[1]{School of Computer Science, The University of Auckland, Auckland, New Zealand}
\affil[2]{Institute of Mathematics and Computer Science, University of Greifswald, Greifswald, Germany}
\affil[3]{Department of Mathematics and Mathematical Biosciences Institute, The Ohio State University, Columbus OH, USA}
\date{}
\begin{document}
\maketitle

\begin{abstract} 
Recently, the perfect phylogeny model with persistent characters has attracted great attention in the literature. It is based on the assumption that complex traits or characters can only be gained once and lost once in the course of evolution. Here, we consider a generalization of this model, namely Dollo parsimony, that allows for multiple character losses. More precisely, we take a combinatorial perspective on the notion of Dollo-$k$ characters, i.e. traits that are gained at most once and lost precisely $k$ times throughout evolution. We first introduce an algorithm based on the notion of spanning subtrees for finding a Dollo-$k$ labeling for a given character and a given tree in linear time. We then compare persistent characters (consisting of the union of Dollo-0 and Dollo-1 characters) and general Dollo-$k$ characters. While it is known that there is a strong connection between Fitch parsimony and persistent characters, we show that Dollo parsimony and Fitch parsimony are in general very different. Moreover, while it is known that there is a direct relationship between the number of persistent characters and the Sackin index of a tree, a popular index of tree balance, we show that this relationship does not generalize to Dollo-$k$ characters. In fact, determining the number of Dollo-$k$ characters for a given tree is much more involved than counting persistent characters, and we end this manuscript by introducing a recursive approach for the former. This approach leads to a polynomial time algorithm for counting the number of Dollo-$k$ characters, and both this algorithm as well as the algorithm for computing Dollo-$k$ labelings are publicly available in the Babel package for BEAST 2.
\end{abstract}

\textit{Keywords:}
Dollo parsimony, persistent characters, spanning trees, Fitch algorithm, Sackin index

\blfootnote{$^\ast$Corresponding author\\ \textit{Email address:} \texttt{email@mareikefischer.de} (Mareike Fischer)}

\section{Introduction}
Based on Dollo's law of irreversibility \citep{Dollo1893}, also referred to as Dollo's law or Dollo's principle, \emph{Dollo parsimony} \citep{Farris1977} is a well-known model in character-based phylogeny reconstruction. It is based on the assumption that a trait that has been lost throughout the course of evolution cannot be regained (i.e. there is no parallel evolution). More precisely, Dollo parsimony assumes that a trait  may only be gained once but may be lost multiple times. It is less restrictive than the so-called \emph{perfect phylogeny model with persistent characters} \citep{Bonizzoni2012} that has recently attracted great attention in the literature \citep{Bonizzoni2012} (see also \citet{Bonizzoni2014,Bonizzoni2016,Bonizzoni2017,Wicke2018}). In this model, a trait can both be gained and lost at most once. Dollo parsimony and variants of it have thus been proven useful in various fields, e.g. in the analysis of intron conservation patterns and the evolution of alternatively spliced exons \citep{Alekseyenko2008}, the evolution of tumor cells in cancer phylogenetics (see, e.g., \citet{Bonizzoni2017a, Bonizzoni2019, Ciccolella2018, El-Kebir2018}), or the evolution of multidomain proteins \citep{Przytycka2006}, but also in the evolution of languages (see, e.g., \citet{Nicholls2008,Bouckaert2017}).

While the general Dollo parsimony model allows for multiple character losses, the Dollo-$k$ parsimony model restricts the number of losses to at most $k$.  For $k=1$, this model thus corresponds to the perfect phylogeny model with persistent characters \citep{Bonizzoni2012}. 

Here, we focus on the notion of binary Dollo-$k$ characters, i.e. binary characters (taking the values 0 and 1, usually interpreted as the presence or absence of some complex trait in a species) that can be realized on a rooted phylogenetic tree $T$ by at most one gain and exactly $k$ losses along the edges of $T$. Note that in contrast to the Dollo-$k$ parsimony model we assume \emph{exactly} $k$ losses instead of \emph{at most} $k$ losses in order to obtain concise mathematical results and characterizations. However, our results for `exactly' $k$ can easily be generalized to `at most' $k$ by considering the union of Dollo-0, Dollo-1, $\ldots$, Dollo-$(k-1)$, and Dollo-$k$ characters.

The main aim of this paper is to provide a new combinatorial perspective on Dollo-$k$ characters. In particular, we link and contrast general Dollo-$k$ characters with the family of persistent characters (i.e. binary characters that can be realized on a rooted phylogenetic tree $T$ by at most one gain and one loss along the edges of $T$), for which a thorough combinatorial analysis was recently performed \citep{Wicke2018}.

We begin by describing a linear time algorithm for finding a Dollo-$k$ labeling for a given binary character and a given tree (i.e. a labeling of the internal nodes of the tree such that there is at most one $0 \rightarrow 1$ transition followed by exactly $k$ $1 \rightarrow 0$ transitions) based on the calculation of a certain spanning tree, which we call the `1-tree'. While this algorithm can also be used to decide whether a given character is persistent, we then show that Dollo-$k$ characters and persistent characters are in general very different. First, while it was shown in \citep{Wicke2018} that there is a close relationship between the so-called Fitch algorithm \citep{Fitch1971} and persistent characters, Dollo-$k$ parsimony and Fitch parsimony are in general very different concepts. Second, while \citep{Wicke2018} also showed that there is a one-to-one correspondence between the balance of a tree (in terms of its Sackin index \citep{Sackin1972}) and the number of persistent characters it induces, this correspondence does not hold for general Dollo-$k$ characters. In fact, counting the number of Dollo-$k$ characters for a given tree and fixed $k$ is much more involved than counting the number of persistent characters,  but we provide a recursive approach for it. We end by discussing our results and indicating some directions for future research.

\section{Preliminaries}
Before we can present our results, we need to introduce some definitions and notations.

\subsection*{Phylogenetic trees and related concepts}
Throughout this manuscript, let $X$ denote a finite set (e.g., of species or taxa) of size $n$, where we assume without loss of generality that $X=\{1, \ldots, n\}$.
\red{\begin{definition}[Rooted binary phylogenetic $X$-tree]
A \emph{rooted binary phylogenetic $X$-tree $T$} is a rooted tree $T=(V(T),E(T))$ (or, whenever there is no ambiguity,  $T=(V,E)$ for short) with root $\rho$, root edge $(\rho', \rho)$, node set $V(T)$ (or $V$), and edge set $E(T)$ (or $E$), whose leaves except for $\rho'$ are identified with $X$ (i.e. $T$ is a \emph{leaf-labeled} tree), and where each node $v \in V \setminus (X \cup \{\rho'\})$ has degree 3. 
\end{definition}}
Note that even though $\rho'$ is mathematically a leaf (i.e. a degree-1 vertex), it is not identified with one of the taxa in $X$ but is considered as an ancestral species instead. Biologically, while $\rho$ can be interpreted as the most recent common ancestor of the taxa in $X$, $\rho'$ can be considered as some past ancestor of $\rho$ itself (Figure \ref{Fig_Tree}).\footnote{While for most phylogenetic studies, considering $\rho$ suffices, concerning the Dollo model we need to take $\rho'$ into account, too, for technical reasons.} Moreover, we refer to the set of degree-3 nodes of $T$ as \emph{internal nodes} and denote it by $\mathring{V}$ (i.e. $\mathring{V} = V \setminus (X \cup \{\rho'\})$).
If $n=1$, $T$ consists only of the root edge $(\rho',\rho)$. For technical reasons, $\rho$ is in this case at the same time defined to be the root and the only leaf in $T$.

We implicitly assume that all edges in $T$ are directed away from $\rho'$. Given two nodes $u, v \in V$, we call $u$ an \emph{ancestor} of $v$ and $v$ a \emph{descendant} of $u$ if there exists a directed path from $u$ to $v$. Note that for technical reasons, we also assume a node $v$ to be its own ancestor and descendant. Moreover, if $u$ and $v$ are connected by an edge $(u,v)$, we say that $u$ is the \emph{direct ancestor} or \emph{parent} of $v$, and $v$ is the \emph{direct descendant} or \emph{child} of $u$. Furthermore, two nodes that have the same parent are called \emph{siblings}. Two leaves, say $x$ and $y$, that have the same parent are also called a \emph{cherry}, and we denote this by $[x,y]$. 
Finally, given a set $\widetilde{V} = \{v_1, v_2, \ldots \} \subseteq V$ of nodes of $T$, we call the last node that lies on all directed paths from $\rho'$ to an element of $\widetilde{V}$, the \emph{most recent common ancestor} (MRCA) of the nodes in $\widetilde{V}$.

Furthermore, given a node $u$ of $T$ (which may be a leaf), we denote the subtree of $T$ rooted at $u$ by $T_u$, and use $v_u$ and $n_u$ to denote the number of nodes, respectively leaves, of $T_u$. Moreover, given a rooted binary phylogenetic tree $T$ on $n \geq 2$ leaves, we often consider the \emph{standard decomposition} of $T$ into its two maximal pending subtrees rooted at the children, say $a$ and $b$, of the root, and denote this by $T=(T_a, T_b)$ (note that in this case, edges $(\rho, a)$ and $(\rho,b)$ are the root edges of $T_a$ and $T_b$, respectively).

\red{Three particular types of phylogenetic trees appearing in this manuscript are the caterpillar, semi-caterpillar, and fully balanaced tree.}
\red{\begin{definition}[(Semi-)caterpillar tree]
A rooted binary phylogenetic tree that has precisely one cherry is called a \emph{(rooted) caterpillar tree}. For technical reasons, if $n=1$ we also consider the unique rooted binary phylogenetic tree with one leaf as a rooted caterpillar tree.
Moreover, a tree $T=(T_a, T_b)$ such that both $T_a$ and $T_b$ are rooted caterpillar trees is called a \emph{semi-caterpillar tree}. 
\end{definition}}

Note that every caterpillar tree is also a semi-caterpillar tree, whereas the converse holds only for the case where  either $T_a$ or $T_b$ (or both if $n=2$) have precisely one leaf.
\red{\begin{definition}[Fully balanced tree of height $h$]
A rooted binary phylogenetic tree with $n=2^h$ leaves with $h \in \mathbb{N}_{\geq 0}$ is called a \emph{fully balanced tree of height} $h$, denoted by $T_h^\mathit{fb}$, if and only if all of its leaves have depth exactly $h$, where the depth of a node $v$ is the number of edges on the path from $\rho$ to $v$.
\end{definition}}

Note that for $n \geq 2$, both maximal pending subtrees of a fully balanced tree are again fully balanced trees, and we have $T_h^{\mathit{fb}}=(T^{\mathit{fb}}_{h-1}, T^{\mathit{fb}}_{h-1})$.

\subsection*{Dollo-$k$ characters and labelings}
Having introduced the concept of a phylogenetic $X$-tree $T$, we now turn to the data that we will map onto the leaves of $T$. \red{We begin by introducing the concept of binary characters.}
\red{\begin{definition}[Binary character]
A \emph{(binary) character} $f$ is a function $f: X \rightarrow \{0,1\}$ from the leaf set $X$ to the state set $\{0,1\}$.
\end{definition}}

We often abbreviate a character  $f$ by denoting it as $f = f(1)f(2) \ldots f(n)$. As an example, for the rooted phylogenetic tree on 6 leaves depicted in Figure \ref{Fig_Tree}(a), character $f=100001$ assigns state 1 to leaves 1 and 6, and state 0 to leaves 2, 3, 4 and 5. 
Moreover, given a binary character $f$ we use $\bar{f}$ to denote its inverted character (where we replace all zeros by ones and vice versa), e.g. for $f=100001$, we have $\bar{f}=011110$.

\red{Further important concepts related to characters are extensions and the changing number.}
\red{\begin{definition}[Extension, change edge, and changing number]
An \emph{extension} of a binary character $f$ is a map $g: V \rightarrow \{0,1\}$ such that $g(x)=f(x)$ for all $x \in X$. 
An edge $e=(u,v) \in E$ with $g(u) \neq g(v)$ is called a \emph{change edge} of $g$ on $T$, and $ch(g) \coloneqq |\{e \in E: \mbox{ $e$ is a change edge}\}|$ denotes the \emph{changing number} of $g$ on $T$.
When $g(u)=0$ and $g(v)=1$ we often loosely refer to $e=(u,v)$ as a $0 \rightarrow 1$ transition, and analogously, if $g(u)=1$ and $g(v)=0$, we speak of a $1 \rightarrow 0$ transition.
\end{definition}}

Based on this we can now turn to Dollo-$k$ characters. Throughout this manuscript, given a rooted phylogenetic tree $T$ with root edge $(\rho',\rho)$, we assume that the state of $\rho'$ is 0. 
\red{\begin{definition}[Dollo-$k(f,T)$ character and Dollo score]
A binary character $f$ is called a \emph{Dollo-$k(f,T)$ character} if it can be realized on $T$ by at most one $0 \rightarrow 1$ transition (representing a `gain' of some trait) followed by exactly $k(f,T)$ $1 \rightarrow 0$ transitions (representing $k$ `losses' of this trait), and such that $k(f,T)$ is minimal \emph{minimal} in the sense that $f$ cannot be realized by fewer $1 \rightarrow 0$ transitions. Whenever there is no ambiguity concerning $T$, we just refer to $f$ as a Dollo-$k$ character. More formally, $f$ is called a Dollo-$k$ character if there exists an extension $g$ of $f$ that realizes $f$ with at most one $0 \rightarrow 1$ transition followed by exactly $k$ $1 \rightarrow 0$ transitions and such that $k$ is minimal. $k(f,T)$, i.e. the minimal number of $1 \rightarrow 0$ transitions required to realize $f$ on $T$, is also often referred to as the \emph{Dollo score of $f$ on $T$}.
\end{definition}}

Note that only the constant character $f=0 \ldots 0$ can be realized on $T$ without a $0 \rightarrow 1$ transition (in fact, it can be realized without any transition at all). In all other cases, there must be precisely one $0 \rightarrow 1$ transition preceding the $k$ $1 \rightarrow 0$ transitions. 

\red{\begin{definition}[Dollo-$k$ labeling]
Given a rooted binary phylogenetic $X$-tree $T$ and a binary character $f$ on $X$, a minimal extension that realizes $f$ (minimal in the sense that it minimizes the number of $1 \rightarrow 0$ transitions) is called a \emph{Dollo-$k$ labeling} for $f$ on $T$.
\end{definition}
We will show in Theorem \ref{thm_dollo_labeling} that this Dollo-$k$ labeling is unique.} Moreover, we often call the $0 \rightarrow 1$ transition a \emph{birth event} and refer to the $1 \rightarrow 0$ transitions as \emph{death events} or \emph{losses}. 
As an example, consider tree $T$ and character $f$ depicted in Figure \ref{Fig_Tree}(a). $f$ can be realized by a birth event on edge $(\rho', \rho)$ and three death events (on the edges incident to leaves 2, 3, and 4, respectively). In particular, $f$ cannot be realized by fewer than three $1 \rightarrow 0$ transitions, and thus $f$ is a Dollo-$3$ character on $T$. In other words, $k(f,T)=3$.\\

Finally, note that the union of Dollo-0 and Dollo-1 characters corresponds to the set of so-called \emph{persistent} characters, i.e. binary characters that can be realized on a rooted phylogenetic tree $T$ by at most one $0 \rightarrow 1$ transition followed by at most one $1 \rightarrow 0$ transition \citep{Bonizzoni2012, Wicke2018}. Our results for general Dollo-$k$ characters will thus also shed new light on persistent characters.

\subsection*{The Fitch algorithm}
In Section \ref{Sec_Fitch} we will relate Dollo-$k$ labelings to the well-known \emph{Fitch algorithm} \citep{Fitch1971} that can be used to calculate the so-called \emph{parsimony score} of a character $f$ on a rooted binary phylogenetic tree $T$ with root edge $(\rho',\rho)$, where $g(\rho')=0$. 
\red{\begin{definition}[Parsimony score and most parsimonious extension]
The parsimony score $l(f,T)$ is defined as 
$$l(f,T) \coloneqq \begin{cases}
\min_g ch(g,T) &\quad \text{if } g(\rho)=0, \\
\min_g ch(g,T) -1 &\quad \text{if } g(\rho)=1,
\end{cases}$$
where the minimum is taken over all possible extensions $g$ of $f$ but a change on the root edge $(\rho',\rho)$ does not increase the score.\footnote{Note that the usual definition of $l(f,T)$ as found in the literature (which states $\min_g ch(g,T)$ regardless of $g(\rho)$) only refers to trees without a root edge. On such trees, our definition coincides with this notion. The second case here is only needed to account for a possible change on the root edge.} An extension $g$ that minimizes the changing number is called a \emph{most parsimonious} extension.
\end{definition}}

In case of rooted binary phylogenetic trees, both the parsimony score as well as a most parsimonious extension can be computed with the Fitch algorithm, which we will now introduce. Formally, this algorithm consists of multiple phases, but here we will only consider the first two phases. 

The first phase is based on \emph{Fitch's parsimony operation} $\ast$, which is defined as follows. Let $\mathcal{R}$ be a set of states (in our case $\mathcal{R}=\{0,1\}$), and let $A, B \subseteq \mathcal{R}$. Then,
$$ A \ast B \coloneqq \begin{cases}
A \cap B &\text{ if } A \cap B \neq \emptyset, \\
A \cup B &\text{ otherwise.} 
\end{cases}$$
In principle, the first phase (a `bottom-up' phase) of the Fitch algorithm traverses $T$ from the leaves to $\rho$ and assigns each parental node a state set (also called `ancestral state set') based on the state sets of its children. First, each leaf is assigned the set consisting of the state assigned to it by $f$ and then all other nodes of $T$ (except for $\rho'$), whose children have already been assigned state sets $A$ and $B$, are assigned set $A \ast B$. Then, the parsimony score $l(f,T)$ corresponds to the number of times the union is taken \citep{Fitch1971}. 
Note, however, that the Fitch algorithm does usually not consider trees with root edges and a potential change on the root edge $(\rho',\rho)$ (recall that we always assume that $\rho'$ is in state 0) is not taken into account when calculating the parsimony score of $f$ on $T$.
Moreover, note that given a binary character $f$ and its inverted version $\bar{f}$, we have $l(f,T)=l(\bar{f},T)$ as Fitch's parsimony operation is a set operation and the roles of zeros and ones are interchangeable.

The second phase of the Fitch algorithm (a `top-down' phase) then traverses $T$ from the root to the leaves to compute a most parsimonious extension. First, the root $\rho$ is arbitrarily assigned one state of its state set that was computed during the first phase of the algorithm. Then, for every internal node $v$ that is a child of a node $u$ that has already been assigned a state, say $g(u)$, we set $g(v)=g(u)$ if $g(u)$ is contained in the state set of $v$. Otherwise, we arbitrarily assign any state from the state set of $v$ to $v$. As the word `arbitrarily' suggests, the most parsimonious extension is not necessarily unique. Moreover, there might be most parsimonious extensions that cannot be found by the second phase of the Fitch algorithm as described here (as we omitted the correction phase that adds more states to the state sets reconstructed by the first phase when appropriate \citep{Fitch1971, Felsenstein2004}), but this is not relevant for our purposes.

As an example, consider tree $T$ and character $f$ depicted in Figure \ref{Fig_Tree}(a). Part (b) of Figure \ref{Fig_Tree} depicts the state sets assigned to the nodes of $T$ (except for $\rho'$) by the first phase of the Fitch algorithm. The $\{0,1\}$ sets assigned to the parents of leaves 1 and 2, respectively leaves 5 and 6, correspond to a union being taken, and thus the parsimony score of $f$ on $T$ equals 2. It is easily verified that applying the second phase of the Fitch algorithm then results in a unique most parsimonious extension for $f$ on $T$ that assigns state 0 to all internal nodes. Note that this unique most parsimonious extension for $f$ is different from the unique Dollo-$3$ labeling for $f$ depicted in Figure \ref{Fig_Tree}(a). We will elaborate on the relationship between most parsimonious extensions and Dollo-$k$ labelings in Section \ref{Sec_Fitch}, where we show in particular that the difference between the parsimony score $l(f,T)$ and the Dollo score $k(f,T)$ can be made arbitrarily large.

\begin{figure}[htbp]
	\centering
	\includegraphics[scale=0.19]{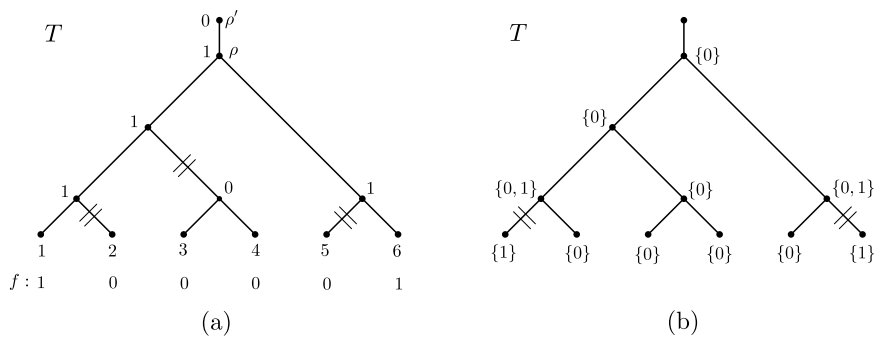}
	\caption{(a) Rooted binary phylogenetic tree $T$ with leaf set $\{1, \ldots,5\}$, root $\rho$, and root edge $(\rho', \rho)$. The binary character $f=100001$ is a Dollo-$3$ character as it requires one $0 \rightarrow 1$ transition on the root edge and three $1 \rightarrow 0$ transitions on the edges leading to leaves 2, 5, and the parent of leaves 3 and 4 (it can easily be verified that $f$ cannot be realized by fewer $1 \rightarrow 0$ transitions). The corresponding unique Dollo-$3$ labeling is given by the extension of $f$ depicted. (b) State sets resulting from applying the first phase of the Fitch algorithm for character $f=100001$. Note that the Fitch algorithm does not recover an ancestral state set for node $\rho'$. Moreover, note that the unique most parsimonious extension of $f$ would assign state $0$ to all internal nodes (yielding a parsimony score of 2; the two edges requiring a change are highlighted). Thus, the Dollo-$3$ labeling for $f$ and the most parsimonious extension of $f$ do not coincide.}
	\label{Fig_Tree}
\end{figure}

\section{Computing and characterizing Dollo-$k$ labelings}
The main aim of this section is to fully characterize Dollo-$k$ labelings and present an algorithm for finding a Dollo-$k$ labeling and determining $k$ for a given tree $T$ and binary character $f$ in linear time. 

However, we start by analyzing a basic property of Dollo-$k$ characters, namely the range of values $k$ can take for a rooted binary phylogenetic tree $T$ on $n$ leaves. 

\subsection{Basic properties of Dollo-$k$ characters}

We now state the intuitive fact that given a rooted binary phylogenetic tree $T$ on $n$ leaves and a binary character $f$, the Dollo score of $f$ on $T$ lies between 0 and $n-2$.  

\begin{proposition} \label{dollo_0_n-2}
Let $T$ be a rooted binary phylogenetic tree with $n$ leaves and let $f$ be a binary character. Then, 
$$ 0 \leq k(f,T) \leq n-2.$$
\end{proposition}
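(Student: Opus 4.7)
The lower bound $k(f,T) \geq 0$ is immediate from the definition, since $k(f,T)$ is a count of $1 \to 0$ transitions. For the upper bound, the plan is to exhibit, for every character $f$, an explicit extension of $f$ whose number of $1 \to 0$ transitions is at most $n-2$; by minimality of $k(f,T)$ this will yield $k(f,T) \leq n-2$. I would split the argument on the size of $X_1 \coloneqq f^{-1}(1)$.

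If $|X_1| \leq 1$, I would set $g \equiv 0$ on all internal nodes and, when $X_1 = \{x\}$, place the lone $0 \to 1$ transition on the pendant edge entering $x$; in either subcase no losses are needed, so $k(f,T) = 0 \leq n-2$ (assuming $n \geq 2$). If $|X_1| \geq 2$, let $v$ be the MRCA of $X_1$, place the unique $0 \to 1$ transition on the edge entering $v$, set $g \equiv 0$ outside $T_v$, and on $T_v$ define $g(u) = 1$ iff $u$ has some descendant in $X_1$. This $g$ agrees with $f$ on every leaf, and its $1 \to 0$ transitions occur precisely at the roots of the maximal subtrees of $T_v$ all of whose leaves lie in $X \setminus X_1$. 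These maximal $0$-subtrees are pairwise vertex-disjoint and each contains at least one leaf of $X \setminus X_1$, so their number is bounded by the number of $0$-labeled leaves inside $T_v$, which equals $n_v - |X_1 \cap T_v| \leq n_v - 2 \leq n-2$.

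The only delicate point, and hence the main obstacle, is the counting estimate in the second case: one must verify both that the chosen extension really incurs exactly one loss per maximal $0$-subtree below $v$, and that the choice of $v$ as MRCA forces at least two leaves of $T_v$ to be $1$-labeled — namely at least one in each of the two maximal pending subtrees at $v$ — so that the naive bound $n_v - 1$ on the loss count can be sharpened to the $n_v - 2$ needed for the proposition.
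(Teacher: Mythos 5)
Your proof is correct, but it takes a genuinely different route from the paper's. The paper argues by exclusion: since a lost trait cannot be regained, no two loss edges can lie on a common root-to-leaf path, so $k(f,T)\leq n$; it then rules out $k(f,T)=n$ and $k(f,T)=n-1$ by showing that any placement of that many pairwise non-nested losses forces $f$ to be the all-zero character or a character with a single $1$, both of which are in fact Dollo-$0$. You instead exhibit an explicit witness extension --- which happens to be the unique Dollo-$k$ labeling of Theorem~\ref{thm_dollo_labeling}, though for your purposes it only needs to be \emph{some} admissible extension --- and bound its loss count by the number of pairwise disjoint maximal $0$-subtrees below the MRCA $v$ of the $1$-leaves, each of which consumes a distinct $0$-leaf of $T_v$, giving $n_v-|X_1|\leq n_v-2\leq n-2$ once $|X_1|\geq 2$. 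The ``delicate point'' you flag is less delicate than you fear: the fact that each maximal pending subtree of $v$ contains a $1$-leaf is true but not needed, since $|X_1|\geq 2$ alone already sharpens the naive count, and the disjointness and one-loss-per-maximal-$0$-subtree claims follow immediately from your definition of $g$ (a node is labeled $0$ exactly when none of its descendants lies in $X_1$, so $1\rightarrow 0$ transitions occur precisely at the roots of the maximal $0$-subtrees). Your construction buys a sharper intermediate bound (the number of $0$-leaves below the birth node) and anticipates the $1$-tree and maximal $\0$-node machinery of Theorem~\ref{prop_dollo_score_1tree} and Corollary~\ref{Dollo_count_lemma}; the paper's argument is construction-free but leans on a somewhat informal case analysis for $k=n-1$. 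Note that the paper additionally verifies sharpness of both bounds via the caterpillar example, which is not part of the statement and which you omit.
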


\begin{proof}
First, as $k(f,T)$ is the minimum number of losses required to realize $f$ on $T$, $k(f,T)$ is clearly non-negative. However, for every tree $T$, there are characters that can be realized by zero losses (e.g., $f=0 \ldots 0$ and $f=1\ldots 1$), and thus 0 is a sharp lower bound for $k(f,T)$.

Now, for the upper bound on $k(f,T)$, notice that we cannot have more than one loss on any of the $n$ unique paths from $\rho'$ to the elements of $X$. Thus, clearly $k(f,T) \leq n$. Now suppose that $k(f,T)=n$. This implies that all elements of $X$ are in state 0, i.e. $f=0 \ldots 0$. However, in this case clearly $k(f,T)=0$, because we require neither a gain nor a loss to realize $f$ on $T$.
Similarly, suppose that $k(f,T)=n-1$. As in the Dollo model a trait cannot be regained once it is lost, a loss edge cannot be ancestral to any other loss edge. This necessarily implies one of the following:
\begin{enumerate}
\item [(i)] there are $n-1$ losses on $n-1$ distinct edges incident to leaves (i.e., there are $n-1$ leaves in state 0 and 1 leaf in state 1).
\item [(ii)] there is 1 loss on an edge incident to a cherry and $n-2$ losses on distinct edges incident to leaves (which implies that all leaves are in state 0). 
\end{enumerate}
However, in both cases, $k(f,T)=0$ (in case (i), $f$ could be realized by a birth event on the edge incident to the single leaf in state 1; in case (ii), no birth or death events are required at all). 
Thus, we can conclude that $k(f,T) \leq n-2$.
This bound is again sharp: Let $T$ be a caterpillar tree with $n$ leaves and let $f$ be such that it assigns state 1 to one leaf of the cherry of $T$ and to the leaf that is a child of the root, whereas $f$ assigns state 0 to all other leaves (see Figure \ref{Fig_DolloPars}). Then, $k(f,T)=n-2$ as $f$ requires the birth event to be placed on the root edge $(\rho',\rho)$ and induces $n-2$ losses on the edges incident to the $n-2$ leaves in state 0.
This completes the proof.
\end{proof}

\begin{figure}[htbp]
	\centering
	\includegraphics[scale=0.25]{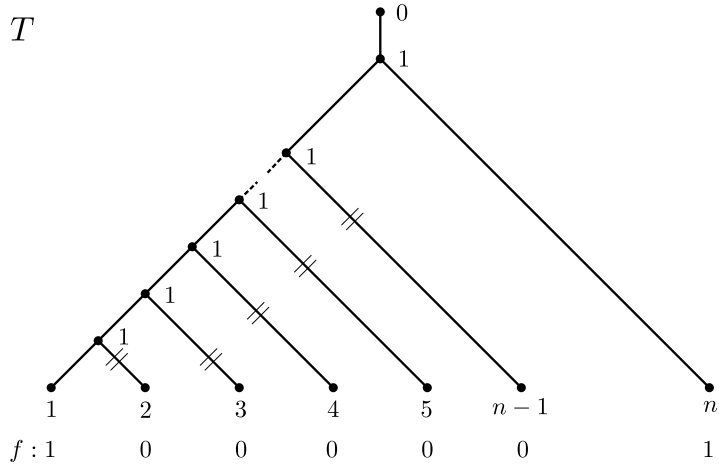}
	\caption{Rooted binary phylogenetic tree $T$ on leaf set $\{1, \ldots, n\}$ and a binary character $f$. $f$ is a Dollo-$(n-2)$-character (requiring $n-2$ death events/losses on the edges directed into the $n-2$ leaves in state $0$).}
	\label{Fig_DolloPars}
\end{figure}

\subsection{The $\cB$-node and the 1-tree}

We now introduce two concepts that will be central to the remainder of this manuscript (in particular to the linear time algorithm for finding a Dollo-$k$ labeling), namely the so-called $\cB$-node (birth node) and the 1-tree. 

\begin{definition}[$\cB$-node] \label{def_bnode}
Let $T$ be a rooted binary phylogenetic tree and let $f$ be a binary character. Then the $\cB$-node (\emph{birth node}) is defined to be the MRCA of all leaves in state 1.
\end{definition}

We now state an intuitive but crucial property of the $\cB$-node. 
\begin{lemma}\label{lemma_Bnode_birthevent}
Let $T$ be a rooted binary phylogenetic tree and let $f$ be a binary character. Consider a Dollo-$k$ labeling for $f$ on $T$. Then, there is a birth event on the edge directed into the $\cB$-node.
\end{lemma}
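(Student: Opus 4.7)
The plan is to argue by contradiction using an exchange argument: if the unique birth event is not placed on the edge into $\cB$, one can strictly reduce the number of losses, violating the minimality built into a Dollo-$k$ labeling.

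First, since the $\cB$-node is well-defined, $f$ assigns state $1$ to at least one leaf; combined with $g(\rho')=0$, any extension realizing $f$ must contain at least one $0 \to 1$ transition, and the Dollo model permits at most one. Thus a Dollo-$k$ labeling has exactly one birth event, occurring on some edge $e=(p,u)$. For any leaf $x$ with $f(x)=1$, the directed path from $\rho'$ to $x$ must contain a $0 \to 1$ transition, which can only be on $e$, so $x$ is a descendant of $u$. Hence $u$ is a common ancestor of every leaf in state $1$, and therefore an ancestor of their MRCA $\cB$.

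The main step, and the only real obstacle, is to exclude $u \neq \cB$. Suppose for contradiction that $u$ is a strict ancestor of $\cB$, and write the directed path as $u = v_0, v_1, \ldots, v_d = \cB$ with $d \geq 1$. The Dollo model forbids a second $0 \to 1$ transition, so $g(v_i)=1$ for every $i \leq d$; otherwise no leaf in state $1$ below $\cB$ could be reached. For each $i < d$, let $w_i$ denote the off-path child of $v_i$. By definition of $\cB$ as the MRCA of all state-$1$ leaves, every leaf of $T_{w_i}$ is in state $0$, so since $g(v_i)=1$, at least one loss must occur on the edge $(v_i, w_i)$ or somewhere inside $T_{w_i}$; consequently $g$ contains at least $d$ losses in these off-path regions. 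Now define a competing extension $g'$ that places the birth event on the edge entering $\cB$, assigns state $0$ to every node of $T$ outside $T_\cB$, and agrees with $g$ on $T_\cB$. Then $g'$ realizes $f$ via a single $0 \to 1$ transition followed by exactly the losses $g$ placed inside $T_\cB$, thereby saving at least $d \geq 1$ losses. This contradicts the minimality of the Dollo-$k$ labeling, so $u = \cB$ as required.
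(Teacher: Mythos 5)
Your proof is correct and takes essentially the same route as the paper's: the birth event cannot lie below the $\cB$-node (a second gain would be forced) and cannot lie strictly above it (losses could be saved by moving it down). You merely make the paper's brief appeal to ``excessive death events'' precise, by exhibiting the $d$ unavoidable losses in the off-path subtrees and the competing extension that avoids them; this is a welcome elaboration rather than a different argument.
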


\begin{proof}
As the $\cB$-node is the MRCA of all leaves in state 1, the birth event cannot be on an edge below the $\cB$-node; otherwise, $f$ would require a second birth event. 
However, the birth event cannot be on an edge above the parent of the $\cB$-node in a Dollo-$k$ labeling, either. If it was, excessive death events would be required to realize $f$ on $T$ and the corresponding labeling would not be a Dollo-$k$ labeling (as $k$ would not be minimal). This completes the proof.
\end{proof}

Biologically, given a rooted phylogenetic tree $T$ and binary character $f$ representing e.g. the presence or absence of some complex trait, the $\cB$-node represents some ancestor of the present-day species where this trait \emph{first} occurred. The $\cB$-node is thus of particular relevance when studying the \emph{origin} of traits or characteristics present in some of today's living species. 
In the following we will see that the $\cB$-node is also fundamental for finding a Dollo-$k$ labeling for a given binary character $f$. 

\begin{definition}[1-tree] \label{def_1tree}
Let $T$ be a rooted binary phylogenetic tree and let $f$ be a binary character. Then, the minimum \red{induced} subtree of $T$ connecting all leaves that are assigned state 1 by $f$ is called the \emph{1-tree} of $f$ on $T$ and is denoted by $\cT(f)$.
\end{definition}

\begin{remark}\label{remark_steiner_tree}
Note that $\cT(f)$ can be considered as a `spanning subtree' of $T$ that spans all leaves of $T$ that are assigned state 1 by $f$. More precisely, it is the unique minimum \red{induced} subtree of $T$ that connects the leaves of $T$ that are in state 1. Note that finding a minimum subtree spanning a specified set of nodes (also called `terminals') in a given edge-weighted graph is formally an instance of the so-called `minimum Steiner tree problem' which is known to be NP-complete \citep{Karp1972}. However, while we can interpret the problem of computing the 1-tree as an instance of the minimum Steiner tree problem (by specifying the leaves in state 1 as the set of terminals and using e.g. unit edge weights), this instance if of course easy to solve as the solution is simply the unique \red{induced} subtree of $T$ connecting all leaves in state 1.
\end{remark}

Moreover, note that the 1-tree is a rooted phylogenetic tree \emph{without} root edge and possibly with additional degree-2 vertices. In particular, the root of the 1-tree has degree 2 whenever $\cT(f)$ has at least two leaves (if there is only one leaf of $T$ that is assigned state 1 by $f$, $\cT(f)$ consists of only one node which is at the same time considered to be the root and only leaf of $\cT(f)$). In fact, the root of the 1-tree is the $\cB$-node. Moreover, all 0's that are descending from the 1-tree in $T$ lead to degree-2 vertices in the 1-tree.

\begin{lemma}\label{lemma_1tree_root}
Let $T$ be a rooted binary phylogenetic tree and let $f$ be a binary character. Then the $\cB$-node is the root of the 1-tree $\cT(f)$.
\end{lemma}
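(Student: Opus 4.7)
The plan is to establish two facts: (i) the $\cB$-node $b$ lies in $\cT(f)$, and (ii) every node of $\cT(f)$ is a descendant of $b$ in $T$. Together these imply that $b$ is the unique highest node of $\cT(f)$, hence its root.

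For (i), I would first dispose of the trivial case in which exactly one leaf $x$ is in state $1$: then both $b$ and $\cT(f)$ consist of that single node by the definitions, and the claim is immediate. Assuming at least two leaves are in state $1$, let $c_1, c_2$ be the two children of $b$ in $T$. Using the \emph{most recent} clause of the MRCA definition, each subtree $T_{c_1}$ and $T_{c_2}$ must contain at least one leaf in state $1$: if all state-$1$ leaves lay in $T_{c_i}$, then $c_i$ would already be a common ancestor of them, contradicting that $b$ is the MRCA. Picking $x \in T_{c_1}$ and $y \in T_{c_2}$ both in state $1$, the undirected path from $x$ to $y$ in $T$ passes through $b$, so $b$ belongs to the induced subtree $\cT(f)$.

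For (ii), any node $v \in \cT(f)$ lies on the path between some pair of state-$1$ leaves $x_i, x_j$. In a rooted tree, such a path decomposes as the concatenation of the directed path from $\text{MRCA}(x_i, x_j)$ down to $x_i$ with the directed path from $\text{MRCA}(x_i, x_j)$ down to $x_j$. Hence $v$ is a descendant of $\text{MRCA}(x_i, x_j)$, which is itself a descendant of $b$ since $b$ is a common ancestor of $x_i$ and $x_j$ (and the MRCA of any subset of state-$1$ leaves is necessarily a descendant of the MRCA of all of them). Therefore $v$ is a descendant of $b$, and combining with (i) yields the conclusion.

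The main subtlety I anticipate is being precise about the edge cases (one or zero state-$1$ leaves) and about what `induced subtree' means — in particular, that the nodes of $\cT(f)$ are exactly those lying on paths between pairs of state-$1$ leaves in $T$. Beyond that, the argument is essentially a direct unpacking of the MRCA definition and should not require additional machinery.
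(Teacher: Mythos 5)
Your proposal is correct and follows essentially the same route as the paper's proof: establish that the $\cB$-node lies in $\cT(f)$ via two state-1 leaves whose connecting path passes through it, and that nothing above the $\cB$-node is needed, handling the one-leaf case separately. You actually fill in a detail the paper leaves implicit (that each child subtree of the $\cB$-node contains a state-1 leaf); just remember to also dispatch the vacuous case $f=0\ldots0$, where the 1-tree is empty and there is no $\cB$-node.
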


\begin{proof}
First, note that if $f=0 \ldots 0$, there is no $\cB$-node in $T$, and the 1-tree $\cT(f)$ is empty. Now, if $f$ assigns state 1 to precisely one leaf $x \in X$, $x$ is the $\cB$-node and $\cT(f)$ consists only of $x$. In particular, the $\cB$-node is the root of $\cT(f)$. Finally, assume that $f$ assigns state 1 to at least two leaves. Then, $\cT(f)$ consists of the union of (undirected) paths in $T$ that connect any two leaves in state 1. As the $\cB$-node is the MRCA of all leaves in state 1, it is necessarily contained in $\cT(f)$ (as there are at least two leaves, say $x$ and $y$, in state 1 such that the unique (undirected) path between $x$ and $y$ visits the $\cB$-node). However, as no edge that is not descending from the $\cB$-node can be required to connect two leaves in state 1, the $\cB$-node must in fact be the root of $\cT(f)$. This completes the proof.
\end{proof}

As an example, consider tree $T$ and character $f$ depicted in Figure \ref{Fig_1tree}. The 1-tree $\cT(f)$ is the subtree of $T$ spanning leaves 1,2, and 4, and the root of $\cT(f)$ is the $\cB$-node. In particular, $f$ can be realized on $T$ as a Dollo-1 character by placing the birth event on the edge directed into the $\cB$-node and placing a single death event on the edge directed into leaf 3.

\begin{figure}[htbp]
\centering
\includegraphics[scale=0.25]{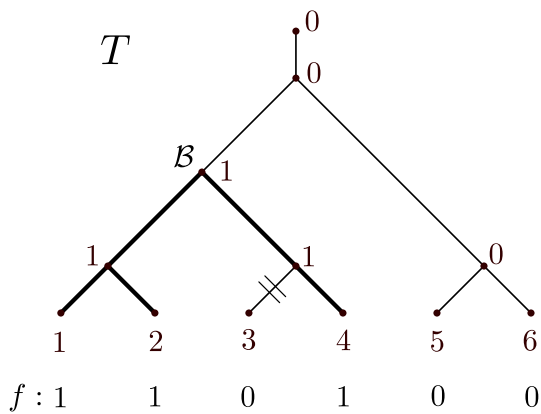}
\caption{Rooted binary phylogenetic tree $T$ and binary character $f$. The 1-tree $\cT(f)$ is depicted in bold; the root of $\cT(f)$ is the $\cB$-node. Additionally, the (unique) Dollo-1 labeling for $f$ is shown. It assigns state 1 to all nodes in $V(\cT(f))$ and state 0 to all nodes in $V(T) \setminus V(\cT(f))$.}
\label{Fig_1tree}
\end{figure}

\subsection[A linear time algorithm for finding Dollo-$k$ labelings]{A linear time algorithm for finding Dollo-$\boldsymbol{k}$ labelings}
Based on the 1-tree we can now state the first main theorem of this paper. In particular, for a given tree $T$ and character $f$, we will derive the uniqueness of the Dollo labeling, and we will show how the 1-tree can be used to find it.

\begin{theorem}\label{thm_dollo_labeling}
Let $T$ be rooted binary phylogenetic tree and let $f$ be a binary character. Then $f$ has a unique Dollo-$k$ labeling and this labeling consists of assigning state 1 to all nodes of $T$ that are in the 1-tree $\cT(f)$ and assigning state 0 to all remaining nodes (i.e. we set $g(v)=1$ for $v \in V(T) \cap V(\cT(f))=V(\cT(f))$ and $g(v)=0$ for $v \in V(T) \setminus V(\cT(f))$).
\end{theorem}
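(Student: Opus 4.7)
The plan is to show first that the proposed labeling $g^*$, defined by $g^*(v)=1$ iff $v \in V(\cT(f))$ and $g^*(v)=0$ otherwise, is a valid Dollo extension of $f$, and then to show that it uniquely minimizes the number of $1 \rightarrow 0$ transitions. The trivial case $f=0\ldots 0$ is handled separately: $\cT(f)$ is empty, $g^* \equiv 0$ realizes $f$ with no transitions at all, and visibly no other extension realizes $f$ with a valid Dollo structure and zero losses. Otherwise, since every leaf of $\cT(f)$ is a $1$-leaf of $T$ and vice versa, $g^*$ agrees with $f$ on $X$. By Lemma \ref{lemma_1tree_root}, $V(\cT(f))$ is a connected subtree of $T$ rooted at the $\cB$-node $b$, so the only $0 \rightarrow 1$ transition of $g^*$ is on the edge entering $b$ (as demanded by Lemma \ref{lemma_Bnode_birthevent}) and its $1 \rightarrow 0$ transitions are exactly the edges from $V(\cT(f))$ to $V(T) \setminus V(\cT(f))$; in particular $g^*$ is a Dollo labeling realizing $f$.

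For minimality and uniqueness, I would take an arbitrary Dollo labeling $g$ of $f$ and let $S = g^{-1}(1)$. Since Dollo's irreversibility forbids any $0 \rightarrow 1$ transition below the single birth event, $S$ must be a connected subtree of $T$, and its root $b'$ must be an ancestor of $b$ so that every $1$-leaf is a descendant of the birth edge. For each such $b'$, let $M_{b'}$ denote the minimum subtree of $T$ connecting $b'$ and all $1$-leaves; then $M_b = \cT(f)$, and for a strict ancestor $b'$ of $b$ the set $V(M_{b'})$ equals $V(\cT(f))$ together with the nodes on the directed path from $b'$ to $b$. Connectivity of $S$ forces $V(M_{b'}) \subseteq S$. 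Writing $\partial(S)$ for the number of edges with tail in $S$ and head outside $S$, the number of $1 \rightarrow 0$ transitions of $g$ equals $\partial(S)$, and the crux is to show $\partial(S) \geq \partial(V(\cT(f)))$ with equality iff $S = V(\cT(f))$.

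I would establish this in two stages. First, fixing $b'$, every $w \in S \setminus V(M_{b'})$ must be an internal node of $T$: $w$ cannot be a $0$-leaf since $g(w)=1$, and any $1$-leaf descendant of $w$ would put $w$ on a path from $b'$ to that $1$-leaf, contradicting $w \notin V(M_{b'})$. Because $T$ is binary, adding such a $w$ (whose parent already lies in the current $1$-set) turns the incoming boundary edge $(p_w,w)$ into an internal edge and creates two new boundary edges at $w$'s two children, for a net change of $+1$. Processing the nodes of $S \setminus V(M_{b'})$ in any top-down order compatible with connectivity therefore yields $\partial(S) = \partial(V(M_{b'})) + |S \setminus V(M_{b'})|$, so $V(M_{b'})$ is the unique minimizer among valid $1$-sets rooted at $b'$. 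Second, varying $b'$: when $b'$ is a strict ancestor of $b$, each node on the path from $b'$ to $b$ other than $b$ contributes one additional boundary edge through its sibling subtree (which cannot contain a $1$-leaf since all $1$-leaves descend from $b$), giving $\partial(V(M_{b'})) = \partial(V(\cT(f))) + d(b',b)$, which is uniquely minimized at $b' = b$. Chaining the two stages identifies $S = V(\cT(f))$ as the unique minimizer, so $g = g^*$ and the Dollo-$k$ labeling is unique. The main obstacle is the bookkeeping of the first stage: one has to pick the top-down order for adding the ``extra'' nodes so that each incremental step really does remove exactly one existing boundary edge and introduce exactly two new ones, which is where the binarity of $T$ is essential.
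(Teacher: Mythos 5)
Your proof is correct, but it takes a genuinely different route from the paper's. The paper proves uniqueness by local forcing: it partitions $V(T)$ into nodes above the $\cB$-node, nodes on paths from the $\cB$-node to $1$-leaves, and nodes below maximal $\0$-nodes, and invokes three separate lemmas (the Zero, One, and Death lemmas, Lemmas \ref{zero_lemma}--\ref{death_lemma}) to show that the state of every single node is already determined in any Dollo-$k$ labeling; uniqueness and the identification with $\cT(f)$ then fall out immediately. You instead run a global exchange argument on the candidate $1$-sets $S=g^{-1}(1)$: you characterize them as connected subtrees rooted at an ancestor $b'$ of the $\cB$-node that contain $M_{b'}$, and then count boundary edges to get $\partial(S)=\partial(V(\cT(f)))+|S\setminus V(M_{b'})|+d(b',b)=\partial(V(\cT(f)))+|S\setminus V(\cT(f))|$, which is uniquely minimized at $S=V(\cT(f))$. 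The incremental bookkeeping you worry about does go through: since no node of $S\setminus V(M_{b'})$ has a descendant in $V(M_{b'})$, a parent-first ordering guarantees that each added internal node removes exactly one boundary edge and creates exactly two. The paper's approach buys modularity --- its three lemmas are reused later (e.g.\ the Death lemma feeds into Theorem \ref{prop_dollo_score_1tree} and Corollary \ref{Dollo_count_lemma}) --- while yours buys a sharper quantitative statement for free: any Dollo extension of $f$ incurs exactly $|g^{-1}(1)\setminus V(\cT(f))|$ losses beyond the optimum, which subsumes Lemma \ref{lemma_Bnode_birthevent} rather than assuming it. One small caution: your parenthetical appeal to Lemma \ref{lemma_Bnode_birthevent} when verifying that $g^*$ is a valid Dollo extension should be dropped or rephrased, since that lemma is a statement about minimal labelings and citing it there before minimality is established reads as circular, even though your actual derivation (connectivity of $\cT(f)$ rooted at the $\cB$-node, via Lemma \ref{lemma_1tree_root}) does not need it.
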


As an example, consider tree $T$ and character $f$ depicted in Figure \ref{Fig_1tree}. The 1-tree $\cT(f)$ is depicted in bold and the Dollo-$k$ labeling for $f$ is given by assigning state 1 to all nodes in $\cT(f)$ and state 0 to all other nodes.

In order to prove Theorem \ref{thm_dollo_labeling}, we require some additional lemmas that further characterize Dollo-$k$ labelings.

\begin{lemma}[Zero lemma] \label{zero_lemma}
Let $T$ be a rooted binary phylogenetic tree and let $f$ be a binary character. Then, in a Dollo-$k$ labeling for $f$, every node not descending from the $\cB$-node is assigned state 0.
\end{lemma}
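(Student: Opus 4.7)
The proof plan is essentially to combine Lemma \ref{lemma_Bnode_birthevent} with the convention that $g(\rho')=0$ and then trace states along the unique directed path from $\rho'$ to an arbitrary node $v$ not descending from the $\cB$-node.

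First, I would apply Lemma \ref{lemma_Bnode_birthevent} to pin down the location of the unique $0 \rightarrow 1$ transition: in any Dollo-$k$ labeling $g$ for $f$ on $T$, the single birth event occurs on the edge $(u,\cB)$ directed into the $\cB$-node. In particular, $g(\cB)=1$ and all remaining change edges of $g$ are of type $1 \rightarrow 0$. I would also briefly handle the degenerate case $f = 0\ldots 0$ (where there is no $\cB$-node and there is nothing to prove) so that the main argument can assume a $\cB$-node exists.

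Next, I would fix any node $v \in V$ that does not descend from $\cB$ and consider the unique directed path $P$ from $\rho'$ to $v$ in $T$. Because $v$ is not a descendant of $\cB$, the edge $(u,\cB)$ cannot lie on $P$ (otherwise $v$ would be a descendant of $\cB$, contradicting the assumption). Hence every change edge of $g$ that appears on $P$ is of type $1 \rightarrow 0$. Starting from $g(\rho')=0$ and encountering only $1 \rightarrow 0$ transitions along $P$, the state can never become $1$; indeed, a simple induction on the length of $P$ shows that $g$ is identically $0$ on the nodes of $P$, so in particular $g(v)=0$.

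The argument is essentially immediate from the earlier lemma and the definition of a Dollo-$k$ labeling, so I do not anticipate a genuine obstacle; the main care is just to state clearly that the only $0 \rightarrow 1$ transition available is on the edge into $\cB$, and therefore any node whose ancestral path avoids that edge remains in state $0$. This then gives the desired conclusion that every node not descending from the $\cB$-node is assigned state $0$ by the Dollo-$k$ labeling.
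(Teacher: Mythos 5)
Your proof is correct, but it takes a genuinely different route from the paper's. The paper proves the Zero Lemma by a minimality (exchange) argument: it supposes a node $v$ not descending from the $\cB$-node is labeled $1$, asserts that the leaves below $v$ are all in state $0$, and concludes that this labeling forces avoidable death events, contradicting the minimality of $k$; it does not invoke Lemma \ref{lemma_Bnode_birthevent} at all. You instead use Lemma \ref{lemma_Bnode_birthevent} to localize the unique $0 \rightarrow 1$ transition on the edge into the $\cB$-node and then propagate the state $0$ from $\rho'$ along the directed path to $v$, observing that this path avoids that edge and hence contains no gain. Your route is a direct argument rather than a contradiction, and it handles uniformly the case where $v$ is a proper ancestor of the $\cB$-node --- a case where the paper's assertion that \emph{all} leaves of $T_v$ are in state $0$ is not literally true (since $T_v$ then contains $T_{\cB}$), so your version is arguably the cleaner of the two. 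What the paper's approach buys in exchange is self-containedness: it rests only on the definition of a Dollo-$k$ labeling as minimizing the number of losses, whereas yours inherits whatever justification underlies Lemma \ref{lemma_Bnode_birthevent}; since that lemma is established earlier and independently, there is no circularity, and your proof stands.
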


\begin{proof}
Let $v$ be a node that is not descending from the $\cB$-node, and suppose that $v$ is assigned state 1 in a Dollo-$k$ labeling for $f$. As $v$ is not descending from the $\cB$-node, by Definition \ref{def_bnode}, all leaves of the subtree $T_v$ of $T$ rooted at $v$ must be in state 0. This implies that assigning state 1 to $v$ induces at least one death event that could be avoided by assigning state 0 to $v$. As a Dollo-$k$ labeling for $f$ is by definition an extension of $f$ that minimizes the number of death events required to realize $f$ on $T$, it thus cannot be the case that $v$ is assigned state 1 in a Dollo-$k$ labeling. In particular, all nodes not descending from the $\cB$-node must be assigned state 0 in a Dollo-$k$ labeling. This completes the proof.
\end{proof}

The next lemma implies that any Dollo-$k$ labeling for a character $f$ will assign state 1 to all nodes in the 1-tree.

\begin{lemma}[One lemma] \label{one_lemma}
Let $T$ be rooted binary phylogenetic tree and let $f$ be a binary character. Then, in a Dollo-$k$ labeling for $f$, every node on the path from the $\cB$-node to a leaf in state 1 is assigned state 1.
\end{lemma}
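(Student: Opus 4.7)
The plan is to argue by contradiction, leveraging the already-established fact (Lemma \ref{lemma_Bnode_birthevent}) that in any Dollo-$k$ labeling the single birth event occurs on the edge directed into the $\cB$-node, so that the $\cB$-node itself carries state $1$.

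First I would fix a Dollo-$k$ labeling $g$ for $f$, a leaf $x \in X$ with $f(x)=1$, and the unique directed path $P=(\cB=v_0,v_1,\ldots,v_m=x)$ from the $\cB$-node to $x$. By the lemma just cited, $g(v_0)=1$, and by hypothesis $g(v_m)=f(x)=1$. Suppose for contradiction that some intermediate vertex $v_i$ on $P$ satisfies $g(v_i)=0$.

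Then, walking along $P$ from $v_0$ to $v_m$, the sequence of labels $g(v_0),g(v_1),\ldots,g(v_m)$ begins with $1$, reaches $0$ at some index, and eventually returns to $1$ at $v_m$. Hence there must be some edge $(v_j,v_{j+1})$ on $P$ with $g(v_j)=0$ and $g(v_{j+1})=1$, i.e.\ a $0 \rightarrow 1$ transition strictly below $\cB$. Combined with the birth event on the edge entering $\cB$, this yields at least two $0 \rightarrow 1$ transitions in $g$, contradicting the definition of a Dollo-$k$ labeling, which permits at most one such transition. Therefore no such $v_i$ can exist, and every vertex on $P$ is labelled $1$.

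The only nontrivial step is confirming that the \enquote{return to state $1$} forces a genuine $0 \rightarrow 1$ transition rather than being absorbed into a rearrangement of the birth event; this is exactly where Lemma \ref{lemma_Bnode_birthevent} is needed, since it pins the unique birth event to the edge entering the $\cB$-node and thereby rules out relocating it to somewhere on $P$ below $v_0$. With that anchor in place, the argument is a direct parity-of-transitions observation along $P$.
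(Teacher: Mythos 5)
Your proof is correct and follows essentially the same route as the paper's: both fix the path from the $\cB$-node to a leaf in state 1, use Lemma \ref{lemma_Bnode_birthevent} to anchor the unique birth event on the edge into the $\cB$-node, and derive a forbidden second $0 \rightarrow 1$ transition from any intermediate node labelled $0$. The only cosmetic difference is that the paper separately dispenses with the trivial cases of zero or one leaf in state 1, which your formulation handles implicitly.
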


\begin{proof}
If $f=0\ldots0$, there is no $\cB$-node and the statement is obviously satisfied. 
If there is precisely one leaf in state 1, this leaf is the $\cB$-node and the statement trivially holds. 
Finally, if there is more than one leaf in state 1, assume, for the sake of a contradiction, that a path, say $P$, between the $\cB$-node and one of the leaves in state 1, say $x$, contains a node $v$ assigned state 0. By Lemma \ref{lemma_Bnode_birthevent}, $v$ cannot be the $\cB$-node (because then $v$ would be in state 1). 
However, as by Lemma \ref{lemma_Bnode_birthevent} there is a birth event on the edge directed into the $\cB$-node, $v$ is in state 0, and $x$ is in state 1, there must be a second birth event on an edge on the path from $v$ to $x$, which is a contradiction. Thus, all nodes on the path from the $\cB$-node to a leaf in state 1 must be assigned state 1. This completes the proof.
\end{proof}

In order to state the last technical lemma, we need the following definition.
\begin{definition}[$\mathit{0}$-node, $\mathit{0}$-clade, maximal $\mathit{0}$-node, maximal $\0$-node]
Let $T$ be a rooted binary phylogenetic tree and let $f$ be a binary character. Then a subtree $T_u$ of $T$ that only contains leaves in state 0 is called a \emph{$\mathit{0}$-clade} and $u$ is called a \emph{$\mathit{0}$-node}. If $T_u$ is maximal in the sense that there is no subtree $T_{u'}$ of $T$ with $V(T_u) \subset V(T_{u'})$ and such that $T_{u'}$ only contains leaves in state 0, $u$ is called a \emph{maximal $\mathit{0}$-node}. Finally, if $u$ is both a descendant of the $\cB$ node and a maximal $\mathit{0}$-node, we refer to $u$ as a \emph{maximal $\0$-node}. 
\end{definition}

As an example, consider tree $T$ and character $f$ depicted in Figure \ref{Fig_1tree}. Then, leaf 3 is a maximal $\0$-node, and the parent of leaves 5 and 6 is a maximal $\mathit{0}$-node but not a maximal $\0$-node (because it is not a descendant of the $\cB$-node).

\begin{remark} \label{remark_maximal0node}
Even though a maximal $\0$-node is defined to be a descendant of the $\cB$-node it cannot be a child of it; this would contradict the definition of the $\cB$-node as the MRCA of all leaves in state 1. Analogously, a maximal $\0$-node cannot be the $\cB$-node itself.
\end{remark}

\begin{lemma}[Death lemma] \label{death_lemma}
Let $T$ be a rooted binary phylogenetic tree and let $f$ be a binary character. Then, in a Dollo-$k$ labeling for $f$, every maximal $\0$-node $u$ has a death event on its incoming edge. In particular, $u$ and all nodes descending from $u$ are assigned state 0. 
\end{lemma}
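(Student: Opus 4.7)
The plan is to analyze the parent $p$ of the maximal $\0$-node $u$, use the One Lemma to force $p$ into state $1$, and then use a minimality/exchange argument on the given Dollo-$k$ labeling $g$ to force $u$ into state $0$; the edge $(p,u)$ is then a death edge, and state $0$ propagates to every descendant of $u$ by the Dollo restriction on gains.

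First, by Remark \ref{remark_maximal0node}, $u$ is neither the $\cB$-node nor a child of it, so its parent $p$ exists and is itself a (proper) descendant of the $\cB$-node. By maximality of $u$ as a $\0$-node, $T_p$ is not a $\mathit{0}$-clade, so the sibling of $u$ (the other child of $p$) has at least one state-$1$ leaf $x$ in its subtree. The directed path from the $\cB$-node to $x$ passes through $p$, so Lemma \ref{one_lemma} yields $g(p)=1$.

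Next I would show $g(u)=0$. If $u$ is a leaf this is immediate since $f(u)=0$. If $u$ is internal I would argue by contradiction: assume $g(u)=1$. Because the Dollo model forbids regains and every leaf of $T_u$ is in state $0$, the path in $T_u$ from $u$ to any leaf of $T_u$ must contain at least one $1\to 0$ transition. Since $u$ has two children in $T$, the death edges of $g$ inside $T_u$ must separate $u$ from the leaves in both child-subtrees, and this forces at least two death edges to lie in $E(T_u)$. Now define the modified extension $g'$ that agrees with $g$ on $V(T)\setminus V(T_u)$ and assigns state $0$ to every node of $T_u$. Then $g'$ is still an extension of $f$ (all leaves of $T_u$ are already in state $0$ under $f$) and still realizes $f$ by the single $0\to 1$ transition on the edge into the $\cB$-node, which lies above $u$ and is untouched by the modification (cf.\ Lemma \ref{lemma_Bnode_birthevent}). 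All death edges of $g$ inside $T_u$ disappear in $g'$, and the only new death edge introduced is $(p,u)$. Hence $g'$ uses strictly fewer $1\to 0$ transitions than $g$, contradicting the minimality of $g$.

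Having established $g(u)=0$ while $g(p)=1$, the edge $(p,u)$ is by definition a death edge. Finally, every descendant of $u$ must also satisfy $g(v)=0$: the unique $0\to 1$ transition allowed by the Dollo-$k$ model has already been placed on the edge into the $\cB$-node, so no further gain is available below $u$, forcing $g\equiv 0$ on $V(T_u)$. The only genuinely delicate step is the exchange argument in the internal case, but it reduces to the elementary observation that disconnecting the root of a rooted binary tree from all of its leaves requires cutting at least two edges, so that at least two losses are paid inside $T_u$ when $g(u)=1$, as opposed to the single loss on $(p,u)$ incurred by the alternative labeling.
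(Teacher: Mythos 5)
Your proof is correct and follows essentially the same route as the paper's: locate the parent $p$ of the maximal $\0$-node, use the One Lemma to force $g(p)=1$, and then invoke minimality of the Dollo-$k$ labeling to force $g(u)=0$ and hence a death event on $(p,u)$. The only difference is that you make the minimality step fully explicit via an exchange argument (at least two losses inside $T_u$ when $g(u)=1$ versus the single loss on $(p,u)$ after relabeling), where the paper simply asserts that placing the loss on the edge into $u$ ``clearly minimizes'' the number of death events.
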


\begin{proof}
Let $f$ be a binary character that assigns state 1 to at least one leaf (if $f$ assigns state 0 to all leaves, there is no $\cB$-node and thus no maximal $\0$-node to consider). 
Let $v$ be a maximal $\0$-node and let $p$ be the parent of $v$. 
As $v$ is a maximal $\0$-node, $p$ must have a descendant leaf in state 1; otherwise $v$ would not be maximal. Moreover, as $v$ is a descendant of the $\cB$-node, one of the ancestors of $p$ must be the $\cB$-node (note that $p$ cannot be the $\cB$ node as it cannot be the MRCA of all leaves in state 1 as $v$ is a maximal $\0$-node). In particular, $p$ lies on the path between the $\cB$-node and some leaf in state 1. Thus, by Lemma \ref{one_lemma}, $p$ is assigned state 1 in any Dollo-$k$ labeling for $f$. In summary, $p$ is assigned state 1, whereas $v$ is a maximal $0$-node. Thus, at least one death event is required to realize $f$ on $T$. Putting this death event on the edge $(p,v)$ (i.e. assigning state 0 to $v$) and assigning state 0 to all other nodes in the $\0$-clade of $v$ clearly minimizes the number of death events required to realize $f$ on $T$. Thus, in a Dollo-$k$ labeling death events must occur on edges directed into maximal $\0$-nodes, and all nodes descending from maximal $\0$-nodes must be assigned state 0. This completes the proof.
\end{proof}

Based on Lemmas \ref{zero_lemma} -- \ref{death_lemma} we can now prove Theorem \ref{thm_dollo_labeling}.

\begin{proof}[Proof of Theorem \ref{thm_dollo_labeling}]
Let $T$ be a rooted binary phylogenetic tree and let $f$ be a binary character. We first show that there is a unique Dollo-$k$ labeling for $f$. 
In order to do so, we partition the nodes of $T$ into three disjoint sets:
\begin{enumerate}
\item [(i)] nodes not descending from the $\cB$-node.
\item [(ii)] nodes on paths from the $\cB$-node (including the $\cB$-node) to leaves in state 1 (including these leaves).
\item [(iii)] nodes descending from maximal $\0$-nodes (including the maximal $\0$-nodes themselves).
\end{enumerate}
The sets described in (i)--(iii) are clearly disjoint. Moreover, the union of the sets in (i)--(iii) comprises $V(T)$. Set (i) covers all nodes in $V(T) \setminus V(T_{\cB})$ (where $T_{\cB}$ is the subtree of $T$ rooted at the $\cB$-node). Moreover, all nodes in $V(T_{\cB})$ are either part of a path from the $\cB$-node to a leaf in state 1, or they are descending from a maximal $\0$-node. Thus, they are covered by sets (ii) and (iii).

Now, by Lemma \ref{zero_lemma} all nodes in (i) must be assigned state 0 in a Dollo-$k$ labeling for $f$, by Lemma \ref{one_lemma} all nodes in (ii) must be assigned state 1, and by Lemma \ref{death_lemma} all nodes in (iii) must be assigned state 0. In particular, there is no node for which there is any choice in whether it is assigned state 0 or state 1. In other words, the Dollo-$k$ labeling for $f$ is unique.

It remains to show that this Dollo-$k$ labeling is such that all nodes of $T$ that are part of the 1-tree $\cT(f)$ are assigned state 1, whereas all other nodes are assigned state 0. Note that by Lemmas \ref{zero_lemma} -- \ref{death_lemma}, the only nodes of $T$ that are assigned state 1 are those nodes that lie on a path between the $\cB$-node and a leaf in state 1. However, the union of all such paths is precisely the 1-tree, and thus we have $g(v)=1$ for all $v \in V(\cT(f))$ and $g(v)=0$ for $v \in V(T) \setminus V(\cT(f))$ as claimed. This completes the proof.
\end{proof}

We can now directly translate Theorem \ref{thm_dollo_labeling} into a linear time algorithm for finding the unique Dollo-$k$ labeling for a binary character $f$ on a rooted binary phylogenetic tree $T$.

\begin{algorithm}[H]\label{Alg_Dollo_1tree}
\KwIn{Rooted binary phylogenetic tree $T$, binary character $f$}
\KwOut{Dollo-$k$ labeling for $f$}
\DontPrintSemicolon
\caption{Compute Dollo-$k$ labeling} 
Compute the 1-tree $\cT(f)$\;
Assign state 1 to all nodes in $ V(\cT(f))$\;
Assign state 0 to all nodes in $V(T) \setminus V(\cT(f))$\;
\end{algorithm}

\begin{remark} \label{remark_complexity}
All steps in Algorithm \ref{Alg_Dollo_1tree} are linear in the number of nodes of $T$. Note, however, that there are different possibilities for computing the 1-tree. 
\begin{enumerate}
\item [(i)] One possibility is to first compute the $\cB$-node, i.e.  the MRCA of all leaves in state 1. This is, for example, possible in the software package BEAST 2 \citep{Bouckaert2019} via the method \texttt{MRCAPrior.getCommonAncestor()} and is also implemented in the BioPerl \citep{Stajich2002} package \\ \texttt{Bio::Tree::TreeFunctionsI} via the method \texttt{get\_lca()}. Once the $\cB$-node has been computed, the 1-tree can be constructed by taking the union of paths between the $\cB$-node and leaves in state 1. 
\item [(ii)] Alternatively, the 1-tree can be computed by considering tree $T$ as undirected, fixing one leaf in state 1, say $x$, and then computing all paths between $x$ and any other leaf in state 1. The union of these paths will then comprise the 1-tree, and this computation is again linear in the number of nodes of $T$.
\end{enumerate}
An implementation of Algorithm \ref{Alg_Dollo_1tree} can be found in the \texttt{DolloAnnotator} app in the Babel package for BEAST 2 \citep{Bouckaert2019}.
\end{remark}

Theorem \ref{thm_dollo_labeling} and the fact that the 1-tree can be computed in linear time immediately leads to the following corollary.

\begin{corollary}\label{alg_linear}
Let $T$ be a rooted binary phylogenetic tree and let $f$ be a binary character. Then, Algorithm \ref{Alg_Dollo_1tree} produces the Dollo-$k$ labeling for $f$ in linear time.
\end{corollary}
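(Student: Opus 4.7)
The plan is to split the verification into a correctness claim and a running-time claim, both of which are essentially packaged by results already in hand. For correctness, I would simply invoke Theorem \ref{thm_dollo_labeling}: the unique Dollo-$k$ labeling assigns state $1$ precisely to the vertices of $\cT(f)$ and state $0$ to the remaining vertices of $T$, which is exactly what Steps 2 and 3 of Algorithm \ref{Alg_Dollo_1tree} produce once Step 1 has returned $\cT(f)$. So the only content lies in the complexity analysis.

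For the running-time bound, I would argue each of the three steps runs in time linear in $|V(T)|$. Steps 2 and 3 are immediate: having the vertex set of $\cT(f)$ in hand, a single traversal of $V(T)$ suffices to assign labels. For Step 1 I would follow the first implementation strategy sketched in Remark \ref{remark_complexity}. Concretely, a single post-order traversal of $T$ computes, at each internal node $v$, the number (or a Boolean flag) indicating whether $T_v$ contains a leaf in state $1$; the $\cB$-node is then the deepest node whose two children both carry at least one such descendant (or, if only one leaf is in state $1$, that leaf itself). A second pass from the $\cB$-node downwards marks exactly those descendants whose subtrees still contain a state-$1$ leaf, and by Lemma \ref{lemma_1tree_root} together with the definition of $\cT(f)$ these are precisely the vertices of the $1$-tree. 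Both passes touch each node a constant number of times, so the total cost is $O(|V(T)|)$, which for a rooted binary phylogenetic $X$-tree is $O(n)$.

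There is no substantive obstacle here beyond making the linearity of the $1$-tree construction explicit; the rest is a direct appeal to Theorem \ref{thm_dollo_labeling}. If one preferred to avoid invoking MRCA subroutines as a black box, the bottom-up/top-down description above is self-contained and gives the linear bound unconditionally.
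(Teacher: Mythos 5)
Your proposal is correct and follows essentially the same route as the paper, which also derives correctness directly from Theorem \ref{thm_dollo_labeling} and obtains linearity from the observation (Remark \ref{remark_complexity}) that the $1$-tree can be built in time linear in $|V(T)|$; your bottom-up/top-down traversal just makes that last point more explicit than the paper's sketch. No gaps.
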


Note that it is not only possible to find the Dollo-$k$ labeling for a binary character $f$ by considering the 1-tree, the 1-tree also allows us to easily compute the Dollo score $k(f,T)$. 

\begin{theorem} \label{prop_dollo_score_1tree}
Let $T$ be a rooted binary phylogenetic tree, let $f$ be a binary character, and let $\cT(f)$ be the 1-tree.
\begin{enumerate}[\rm (i)]
\item If $f$ assigns state 1 to at most one leaf, $k(f,T)=0$.
\item If $f$ assigns state 1 to at least two leaves, $k(f,T)$ corresponds to the number of degree-2 nodes in the 1-tree minus 1, i.e.
$$ k(f,T) = | \{ v \in V(\cT(f)): deg(v)=2\}|-1.$$
\end{enumerate}
\end{theorem}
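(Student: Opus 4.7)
The plan is to read off $k(f,T)$ directly from the unique Dollo-$k$ labeling provided by Theorem~\ref{thm_dollo_labeling}. That labeling assigns state $1$ exactly to $V(\cT(f))$ and state $0$ elsewhere, so a death event is precisely an edge $(u,v) \in E(T)$ with $u \in V(\cT(f))$ and $v \notin V(\cT(f))$. Thus $k(f,T)$ equals the number of edges of $T$ leaving $\cT(f)$ downwards, and the whole task reduces to a local count at the nodes of $\cT(f)$. For part (i), if $f \equiv 0$ there is nothing to realize so $k(f,T) = 0$; if exactly one leaf $x$ has $f(x) = 1$, then $\cT(f) = \{x\}$, so no edge leaves $\cT(f)$ and again $k(f,T) = 0$.

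For part (ii), I would assume at least two leaves are in state $1$, so $\cT(f)$ has $\geq 2$ leaves and, by Lemma~\ref{lemma_1tree_root}, its root is the $\cB$-node (which must then be an internal node of $T$, not a leaf). Since $\cT(f)$ is the \emph{induced} subtree of $T$ on the nodes of $T$ appearing on paths between state-$1$ leaves, two nodes of $\cT(f)$ are adjacent in $\cT(f)$ iff they are adjacent in $T$, so the degree of $u$ in $\cT(f)$ counts its $T$-neighbors that lie in $V(\cT(f))$. I would classify each $u \in V(\cT(f))$ into three cases and compute both $\deg_{\cT(f)}(u)$ and the number of children of $u$ (in $T$) lying outside $\cT(f)$ — the latter being the number of death events contributed by $u$.

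The case analysis goes as follows. A state-$1$ leaf of $T$ has only its parent as a $T$-neighbor, and that parent lies on a path from the $\cB$-node to this leaf, so it is in $\cT(f)$; hence degree $1$ in $\cT(f)$ and $0$ death events. The $\cB$-node has its parent outside $\cT(f)$ (being the root of $\cT(f)$) and both its children in $\cT(f)$ (each child's subtree contains a state-$1$ leaf, by the MRCA property); hence degree $2$ in $\cT(f)$ and $0$ death events. For every other internal node $u \in V(\cT(f))$, its parent lies on a path from the $\cB$-node to some state-$1$ leaf through $u$, so the parent is in $\cT(f)$, and at least one child is in $\cT(f)$; the other child is in $\cT(f)$ iff it has a state-$1$ descendant. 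Thus $\deg_{\cT(f)}(u) \in \{2,3\}$, and the number of children of $u$ outside $\cT(f)$ is exactly $3 - \deg_{\cT(f)}(u) \in \{0,1\}$, which equals $1$ precisely when $\deg_{\cT(f)}(u) = 2$.

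Summing over $u \in V(\cT(f))$, the total number of death events equals the number of degree-$2$ non-$\cB$-node internal nodes in $\cT(f)$, which is $|\{v \in V(\cT(f)) : \deg(v) = 2\}| - 1$, with the $-1$ accounting for the $\cB$-node (the unique degree-$2$ node that contributes no death event). The only subtle point — which I would flag carefully in the write-up — is that the $\cB$-node is always an internal node of $T$ under the hypothesis of (ii), so it really is degree $2$ in $\cT(f)$ rather than an edge case, and it is the unique degree-$2$ node whose degree is caused by its parent being absent from $\cT(f)$ rather than by a missing child. Combining the two contributions yields the claimed formula.
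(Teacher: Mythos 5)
Your proof is correct and follows essentially the same route as the paper: both identify, via the unique Dollo-$k$ labeling of Theorem~\ref{thm_dollo_labeling}, the non-root degree-2 nodes of $\cT(f)$ with the edges of $T$ on which death events occur, and subtract $1$ for the $\cB$-node, whose degree-2 status stems from the birth edge rather than a loss. Your local degree count at each node of $\cT(f)$ just makes explicit what the paper's proof states more briefly.
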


\begin{proof}
Let $T$ be a rooted binary phylogenetic tree and let $f$ be a binary character.
\begin{enumerate}[\rm (i)]
\item If $f$ assigns state 1 to no leaf (i.e. $f=0\ldots 0$), we clearly have $k(f,T)=0$ (see also proof of Proposition \ref{dollo_0_n-2}). Moreover, if $f$ assigns state 1 to precisely one leaf of $T$, say $x$, $f$ can be realized by one birth event (on the edge incident to leaf $x$) and no death event. In particular, $k(f,T)=0$.
\item Now assume that $f$ assigns state 1 to at least two leaves and let $\cT(f)$ be the 1-tree. As $T$ does not contain degree-2 nodes by definition, the degree-2 nodes in $\cT(f)$ must correspond to edges of $T$ that `connect' $\cT(f)$ to the rest of $T$. In particular, the root of $\cT(f)$ (i.e. the degree-2 node of $\cT(f)$ which is the $\cB$-node) is connected to the rest of $T$ via an edge on which the birth event takes place. All other degree-2 nodes of $\cT(f)$ correspond to edges leading to maximal $\0$-nodes in $T$ and thus (by Lemma \ref{death_lemma}) to edges on which death events take place. By Theorem \ref{thm_dollo_labeling} these are the only death events required to realize $f$ on $T$ (as assigning state 1 to all nodes of $\cT(f)$ and state 0 to all remaining nodes of $T$ yields the unique Dollo-$k$ labeling for $f$ and as there cannot be death events above the $\cB$-node). In particular, $k(f,T)=|\{v \in V(\cT(f)): deg(v)=2\}|-1$ as claimed (note that we subtract 1 as the $\cB$-node is a degree-2 node of $\cT(f)$ that does not correspond to a death event but to the birth event). This completes the proof.
\end{enumerate}
\end{proof}
 
The proof of Part (ii) of Theorem \ref{prop_dollo_score_1tree} directly implies the following corollary.

\begin{corollary}\label{Dollo_count_lemma}
Let $T$ be a rooted binary phylogenetic tree and let $f$ be a binary character. Then, the Dollo score $k(f,T)$ equals the number of maximal $\0$-nodes in $T$.
\end{corollary}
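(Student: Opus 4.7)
The plan is to deduce the corollary directly from Theorem \ref{prop_dollo_score_1tree} by setting up a bijection between certain degree-2 vertices of the 1-tree and the maximal $\0$-nodes of $T$. First I would dispose of the degenerate cases: if $f$ assigns state 1 to at most one leaf, then Theorem \ref{prop_dollo_score_1tree}(i) gives $k(f,T)=0$, and I would observe that the set of maximal $\0$-nodes is empty as well (either there is no $\cB$-node at all, or the $\cB$-node is the unique leaf in state 1 and has no proper descendants, while Remark \ref{remark_maximal0node} excludes the $\cB$-node itself from being a maximal $\0$-node). Both sides of the claimed equality are therefore zero.

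For the main case, in which $f$ has at least two leaves in state 1, Theorem \ref{prop_dollo_score_1tree}(ii) yields
\[
 k(f,T) \;=\; |\{v \in V(\cT(f)) : \deg(v)=2\}| - 1.
\]
Since the $\cB$-node is itself a degree-2 vertex of $\cT(f)$ (by the discussion preceding Lemma \ref{lemma_1tree_root}), subtracting 1 amounts to counting the degree-2 vertices of $\cT(f)$ that are distinct from the $\cB$-node. Accordingly I would construct a bijection $\Phi$ between
\[
 \mathcal{D} \;=\; \{v \in V(\cT(f)) : \deg(v)=2,\ v \ne \cB\text{-node}\}
\]
and the set $\mathcal{M}$ of maximal $\0$-nodes of $T$.

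For the forward direction, given $v \in \mathcal{D}$, note that $v$ has degree 3 in $T$ (as $v$ is internal in $T$), so exactly one of its two children in $T$ fails to belong to $\cT(f)$; I would send $v$ to this child $u$. Since $u \notin V(\cT(f))$, the subtree $T_u$ contains no leaf in state 1, so $u$ is a $\0$-node; maximality follows because the parent $v$ has a descendant leaf in state 1 through its other child, and because $v$ descends from the $\cB$-node by Lemma \ref{one_lemma}, so does $u$. For the inverse map, given $u \in \mathcal{M}$, let $p$ denote its parent in $T$; maximality of $u$ ensures that $p$ has a descendant leaf in state 1 through its other child, and Remark \ref{remark_maximal0node} tells us that $p$ lies strictly between the $\cB$-node and some state-1 leaf, so by Lemma \ref{one_lemma} we have $p \in V(\cT(f))$ and $p \ne \cB$-node. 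In $\cT(f)$, the node $p$ has one parent and precisely one child (the sibling of $u$), so $p \in \mathcal{D}$, and sending $u$ to $p$ visibly inverts $\Phi$.

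I do not anticipate a genuine obstacle here, since the argument is essentially bookkeeping on the structure already exposed in the proof of Theorem \ref{prop_dollo_score_1tree}(ii). The only subtlety worth double-checking is that the $\cB$-node is correctly excluded on both sides: on the $\cT(f)$ side by the explicit "$-1$" correction, and on the $\mathcal{M}$ side by Remark \ref{remark_maximal0node}. Combining the bijection with the displayed identity from Theorem \ref{prop_dollo_score_1tree}(ii) then gives $k(f,T) = |\mathcal{M}|$, completing the proof.
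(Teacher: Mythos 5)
Your proof is correct and follows essentially the same route as the paper, which derives this corollary directly from the proof of Theorem \ref{prop_dollo_score_1tree}(ii), where the correspondence between the non-root degree-2 vertices of $\cT(f)$ and the maximal $\0$-nodes is asserted (``all other degree-2 nodes of $\cT(f)$ correspond to edges leading to maximal $\0$-nodes''). You merely make that implicit bijection explicit and verify both directions, which is a sound (and slightly more careful) write-up of the same argument.
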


Theorem \ref{prop_dollo_score_1tree} can also be used to decide in linear time whether a binary character $f$ is persistent on a given rooted binary phylogenetic tree $T$ (i.e. to decide whether $f$ is a Dollo-0 or a Dollo-1 character). 

\begin{corollary}\label{cor_1tree_persistent}
Let $T$ be a rooted binary phylogenetic tree, let $f$ be a binary character, and let $\cT(f)$ be the 1-tree. 
Then, $f$ is persistent on $T$ if and only if $\cT(f)$ contains at most two degree-2 nodes.
\end{corollary}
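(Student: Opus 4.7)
The plan is to deduce this directly from Theorem \ref{prop_dollo_score_1tree}, splitting into cases by how many leaves $f$ assigns state $1$. Recall that $f$ is persistent on $T$ if and only if $k(f,T) \in \{0,1\}$, i.e.\ $f$ is either a Dollo-$0$ or a Dollo-$1$ character.

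First I would handle the case where $f$ assigns state $1$ to at most one leaf. Here, by the construction in Definition \ref{def_1tree} (and the discussion just before Lemma \ref{lemma_1tree_root}), $\cT(f)$ is either empty (when $f = 0\ldots 0$) or consists of a single node (when exactly one leaf is in state $1$). In either case, $\cT(f)$ has $0 \leq 2$ degree-$2$ nodes. At the same time, by Part (i) of Theorem \ref{prop_dollo_score_1tree}, $k(f,T) = 0$, so $f$ is persistent. Both sides of the equivalence are therefore true, so the statement holds trivially in this case.

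Next I would handle the case where $f$ assigns state $1$ to at least two leaves. Here Part (ii) of Theorem \ref{prop_dollo_score_1tree} gives
$$k(f,T) \;=\; \bigl|\{v \in V(\cT(f)) : \deg(v) = 2\}\bigr| - 1.$$
Since $f$ is persistent if and only if $k(f,T) \leq 1$, rearranging yields that $f$ is persistent if and only if $\bigl|\{v \in V(\cT(f)) : \deg(v) = 2\}\bigr| \leq 2$, as claimed.

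Combining the two cases completes the proof. There is essentially no obstacle here: the corollary is a one-line consequence of Theorem \ref{prop_dollo_score_1tree} once one notices that ``persistent'' is exactly the union of the Dollo-$0$ and Dollo-$1$ regimes. The only minor care needed is to verify the edge cases (empty 1-tree, single-node 1-tree) separately, since Part (ii) of Theorem \ref{prop_dollo_score_1tree} only applies when at least two leaves are in state $1$.
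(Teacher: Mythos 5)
Your proof is correct and follows essentially the same route as the paper, namely reading the corollary off as a direct consequence of Theorem \ref{prop_dollo_score_1tree} via the equivalence ``persistent $\Leftrightarrow k(f,T)\leq 1$''; your explicit treatment of the degenerate cases (empty or single-node 1-tree) is slightly more careful than the paper's one-line argument but changes nothing of substance.
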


\begin{proof}
The statement is a direct consequence of Theorem \ref{prop_dollo_score_1tree}. If $f$ is persistent, $k(f,T) \leq 1$, and thus $\cT(f)$ contains at most 2 degree-2 nodes, namely the root and possibly one vertex that is incident to a loss edge. If, on the other hand,  $\cT(f)$ contains at most two degree-2 nodes, $k(f,T) \leq 1$, and thus $f$ is persistent. 
\end{proof}

\section{Links and contrasts between persistent characters and general Dollo-$k$ characters}

\subsection{Dollo parsimony and Fitch parsimony} \label{Sec_Fitch}
Corollary \ref{cor_1tree_persistent} implies that in order to decide whether a binary character $f$ is persistent or not it suffices to compute the 1-tree for $f$ and count the number of degree-2 nodes in it. Note that a different approach for deciding whether a binary character $f$ is persistent was recently introduced in \citep{Wicke2018} (cf. Theorem 1 therein). This approach uses a connection between maximum parsimony, the Fitch algorithm, and persistent characters. More precisely, it was shown that a character $f$ with $l(f,T) \leq 1$ is guaranteed to be persistent, whereas a character $f$ with $l(f,T) > 2$ cannot be persistent. Moreover, for a character $f$ with $l(f,T)=2$ the question whether $f$ is persistent or not solely depends on how the ancestral state sets found by the Fitch algorithm are distributed across the tree.
Additionally, it was shown in \citep{Wicke2018} that the unique `persistent' extension for a persistent character $f$ (i.e. its unique Dollo-0 or Dollo-1 labeling, respectively) is always also guaranteed to be a most parsimonious extension (Proposition 1 and Lemma 4 in \citep{Wicke2018}). While we have already seen that this is not the case for general Dollo-$k$ characters (Figure \ref{Fig_Tree}), in the following we show that the difference between the parsimony score and the Dollo score can even be made arbitrarily large. Thus, Dollo parsimony and Fitch parsimony are in general very different. However, the parsimony score can be bounded from above by considering the Dollo score of a character $f$ and its inverted character $\bar{f}$.

\begin{proposition}\label{prop_dollo_pars_bound}
Let $T$ be a rooted binary phylogenetic tree, let $f$ be a binary character, and let $\bar{f}$ be its inverted version. Then,
$$l(f,T) \leq \min \{k(f,T), k(\bar{f},T) \} +1.$$
\end{proposition}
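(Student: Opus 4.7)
The plan is to use the Dollo-$k$ labeling itself as a witness extension for the parsimony problem, and then invert the character to get the second half of the minimum. By Theorem \ref{thm_dollo_labeling}, for any binary character $f$ there is a unique Dollo-$k$ labeling $g$ of $f$ on $T$, with $g(\rho')=0$, with exactly $k(f,T)$ many $1\rightarrow 0$ transitions, and with at most one $0\rightarrow 1$ transition (the birth event, which sits on the edge directed into the $\cB$-node by Lemma \ref{lemma_Bnode_birthevent}). Thus $ch(g,T)\leq k(f,T)+1$.

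To translate this into a bound on $l(f,T)$, I would carefully account for the paper's convention that a change on the root edge $(\rho',\rho)$ does not contribute to the parsimony score. Split into three cases. First, if $f=0\ldots 0$, then there is no $\cB$-node, $g$ is constantly $0$, and $l(f,T)=0=k(f,T)$. Second, if the $\cB$-node coincides with $\rho$, the birth event sits on the root edge and is therefore free; the remaining $k(f,T)$ changes are all non-root, so $l(f,T)\leq k(f,T)$. Third, if the $\cB$-node is a strict descendant of $\rho$, then $g(\rho)=0$ and all $k(f,T)+1$ changes (one birth and $k(f,T)$ losses) lie strictly below the root edge, giving $l(f,T)\leq k(f,T)+1$. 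In every case we obtain $l(f,T)\leq k(f,T)+1$.

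For the second half of the minimum, I would apply the same argument to the inverted character $\bar f$ on $T$: by Theorem \ref{thm_dollo_labeling} there is a unique Dollo-$k$ labeling of $\bar f$, and the case analysis above yields $l(\bar f,T)\leq k(\bar f,T)+1$. Combining this with the identity $l(f,T)=l(\bar f,T)$ recorded in the preliminaries (which holds because Fitch's parsimony operation is symmetric in the two states, up to a possibly free change on the root edge), we conclude $l(f,T)\leq k(\bar f,T)+1$. Taking the minimum of the two bounds gives the claimed inequality.

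I do not expect any serious obstacle: the argument is essentially just substituting the Dollo labeling into the parsimony problem. The only subtle point is the bookkeeping of the root edge, where the definition's case split on $g(\rho)$ conveniently matches the geometric case split on whether the $\cB$-node equals $\rho$ or lies strictly below it, so that the ``$-1$'' absorbs the birth event precisely when it would be placed on the root edge. The symmetry $l(f,T)=l(\bar f,T)$ is stated in the preliminaries and needs no further justification.
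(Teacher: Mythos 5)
Your proposal is correct and follows essentially the same route as the paper: use the Dollo-$k$ labeling as a witness extension with changing number $k(f,T)+1$, handle the root-edge convention via a case split on whether the birth event lies on $(\rho',\rho)$, repeat for $\bar{f}$, and combine with $l(f,T)=l(\bar{f},T)$. Your case analysis on the location of the $\cB$-node is just a slightly more explicit version of the paper's case split on whether the Dollo labeling places the $0\rightarrow 1$ transition on the root edge.
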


\begin{proof}
First, if $f=0 \ldots 0$, we have $l(f,T)=0 = k(f,T) = k(\bar{f},T)$ and the statement trivially holds. Thus, assume now that $f \neq 0 \ldots 0$.
By definition $k(f,T)$, respectively $k(\bar{f},T)$, is the minimum number of $1 \rightarrow 0$ transitions required to realize $f$, respectively $\bar{f}$, on $T$. Additionally, these $1 \rightarrow 0$ transitions are preceded by precisely one $0 \rightarrow 1$ transition on an edge $e \in E(T) \setminus \{(\rho',\rho)\}$ or on the root edge $(\rho',\rho)$. 
In any case, the Dollo labeling for $f$ is an extension of $f$ that induces a changing number of $k(f,T)+1$, and analogously the Dollo labeling for $\bar{f}$ is an extension of $\bar{f}$ that induces a changing number of $k(\bar{f},T)+1$. Now, as $l(f,T)$ is defined as the minimum changing number over \emph{all} extensions of $f$ (but without taking into account a potential change on the root edge), we clearly have 
$$l(f,T) \leq \begin{cases}
k(f,T) & \parbox[t]{.7\textwidth}{ if the Dollo labeling for $f$ induces a $0 \rightarrow 1$  transition on edge  $(\rho', \rho)$,} \\
 k(f,T)+1 &\text{ else}.
\end{cases}$$
Analogously, 
$$l(\bar{f},T) \leq \begin{cases}
k(\bar{f},T) &\parbox[t]{.7\textwidth}{ if the Dollo labeling for $\bar{f}$ induces a $0 \rightarrow 1$ transition on edge $(\rho', \rho)$,} \\
 k(\bar{f},T)+1 &\text{ else}.
\end{cases}$$
However, as $l(f,T) = l(\bar{f},T)$, this directly implies that $l(f,T) \leq \min\{k(f,T), d(\bar{f},T)\}+1$. This completes the proof.
\end{proof}

While $l(f,T)-1$ is a lower bound for the Dollo scores of $f$ and $\bar{f}$, the absolute difference between the parsimony score and the Dollo score can be made arbitrarily large as we will show in the following proposition. Thus, unlike in the case of persistent characters, there is no close relationship between Fitch parsimony and Dollo parsimony.

\begin{proposition}\label{Prop_Difference}
Let $T$ be a rooted binary phylogenetic tree, let $f$ be a binary character, and let $\bar{f}$ be its inverted version. Then both the absolute differences between $l(f,T)$ and $k(f,T)$ as well as between $l(f,T)$ and $k(\bar{f},T)$ can be made arbitrarily large.
\end{proposition}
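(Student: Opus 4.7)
The plan is to exhibit an explicit family of tree--character pairs for which the Fitch parsimony score remains constant while the Dollo score grows linearly with the number of leaves. A caterpillar tree with ``boundary'' ones is the natural candidate.

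\textbf{Construction.} For $n \geq 4$, let $T_n$ be a rooted caterpillar tree on $X = \{1,\ldots,n\}$ whose cherry is $[n-1,n]$ and whose backbone is $\rho \to v_2 \to v_3 \to \cdots \to v_{n-1}$, with $v_i$ having leaf $i$ as its other child for $i = 2,\ldots,n-2$ and $v_{n-1}$ having children $n-1$ and $n$; the root $\rho$ has children $1$ and $v_2$. Let $f_n$ be the character defined by $f_n(1) = f_n(n) = 1$ and $f_n(i) = 0$ for all $2 \le i \le n-1$.

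\textbf{Computing $k(f_n,T_n)$.} The $\cB$-node is the MRCA of leaves $1$ and $n$, which is $\rho$. The $1$-tree $\cT(f_n)$ is the (induced) path $1 - \rho - v_2 - v_3 - \cdots - v_{n-1} - n$. Every internal node of this path, namely $\rho, v_2, \ldots, v_{n-1}$, has degree $2$ in $\cT(f_n)$, so there are $n-1$ degree-$2$ nodes. By Theorem~\ref{prop_dollo_score_1tree}(ii), $k(f_n,T_n) = n-2$. (Equivalently, by Corollary~\ref{Dollo_count_lemma}, the maximal $\0$-nodes are exactly the leaves $2,3,\ldots,n-1$.)

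\textbf{Computing $l(f_n,T_n)$.} I will run the bottom-up Fitch phase. Starting at the cherry, $\{0\} \ast \{1\} = \{0,1\}$ at $v_{n-1}$ (one union). Proceeding up the backbone, each $v_i$ for $i = n-2, n-3, \ldots, 2$ receives $\{0,1\} \ast \{0\} = \{0\}$ (intersections only). Finally at the root, $\{0\} \ast \{1\} = \{0,1\}$ (one more union). Hence $l(f_n,T_n) = 2$. Therefore
\[
|\, l(f_n,T_n) - k(f_n,T_n)\,| = |2 - (n-2)| = n-4,
\]
which can be made arbitrarily large by choosing $n$ large. This proves the first claim.

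\textbf{The second claim.} To bound $|l(f,T) - k(\bar f,T)|$ from below, simply reverse roles: define $f'_n := \overline{f_n}$, so that $\overline{f'_n} = f_n$. Since $l(f,T) = l(\bar f,T)$, one has $l(f'_n,T_n) = l(f_n,T_n) = 2$, while $k(\overline{f'_n},T_n) = k(f_n,T_n) = n-2$. Thus $|l(f'_n,T_n) - k(\overline{f'_n},T_n)| = n-4$, again unbounded. The main (very minor) obstacle is just bookkeeping on the Fitch traversal of the caterpillar; the Dollo side is immediate from the $1$-tree description.
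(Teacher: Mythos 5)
Your proof is correct, but it takes a different route from the paper. The paper exhibits a single two-parameter family (the tree of Figure \ref{Fig_f_fbar} on $n=6+a+b$ leaves) with $l(f,T)=l(\bar f,T)=3$, $k(f,T)=b+3$ and $k(\bar f,T)=a+2$, whereas you reuse the extremal caterpillar example from the proof of Proposition \ref{dollo_0_n-2} (state $1$ on the root's leaf-child and on one cherry leaf), verify $k(f_n,T_n)=n-2$ via the $1$-tree and $l(f_n,T_n)=2$ by a direct Fitch traversal, and then dispose of the second difference by the clean observation that replacing $f$ with $\bar f$ and invoking $l(f,T)=l(\bar f,T)$ reduces it to the first. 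Both computations check out (the $1$-tree is the path from leaf $1$ to leaf $n$ with $n-1$ degree-$2$ nodes, and Fitch takes exactly two unions, at the cherry and at the root). Your construction is simpler and needs no new figure; what the paper's construction buys is \emph{simultaneity}: with $a$ and $b$ both large, \emph{both} differences are large for the \emph{same} character, so in particular $\min\{k(f,T),k(\bar f,T)\}$ can exceed $l(f,T)$ by an arbitrary amount, showing that the bound of Proposition \ref{prop_dollo_pars_bound} has no converse. In your families one always has $k(\bar f_n,T_n)=1$ (resp.\ $k(f'_n,T_n)=1$), so $\min\{k(f,T),k(\bar f,T)\}=l(f,T)-1$ exactly saturates Proposition \ref{prop_dollo_pars_bound} and the simultaneous statement is not witnessed. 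Under the literal reading of the proposition --- each difference separately unbounded --- your proof is complete; if the simultaneous reading is intended, it needs a small patch (e.g., grafting your two examples onto a common root), which is essentially what the paper's figure does.
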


\begin{proof}
Consider the rooted binary phylogenetic tree $T$ with $n=6+a+b$ (for some $a,b \in \mathbb{N}_{\geq 1}$) leaves together with the binary character $f$ and its inverted character $\bar{f}$ depicted in Figure \ref{Fig_f_fbar}. Here, we have $l(f,T)=l(\bar{f},T)=3$ (regardless of $n$), whereas $k(f,T)=b+3$ and $k(\bar{f},T)=a+2$. Thus, both $|k(f,T)-l(f,T)|=b$ and $|k(\bar{f},T)-l(f,T)|=|a-1|$ can be made arbitrarily large by increasing $a$ and $b$, respectively. This completes the proof.
\end{proof}

\begin{figure}[htbp]
    \centering
    \includegraphics[scale=0.25]{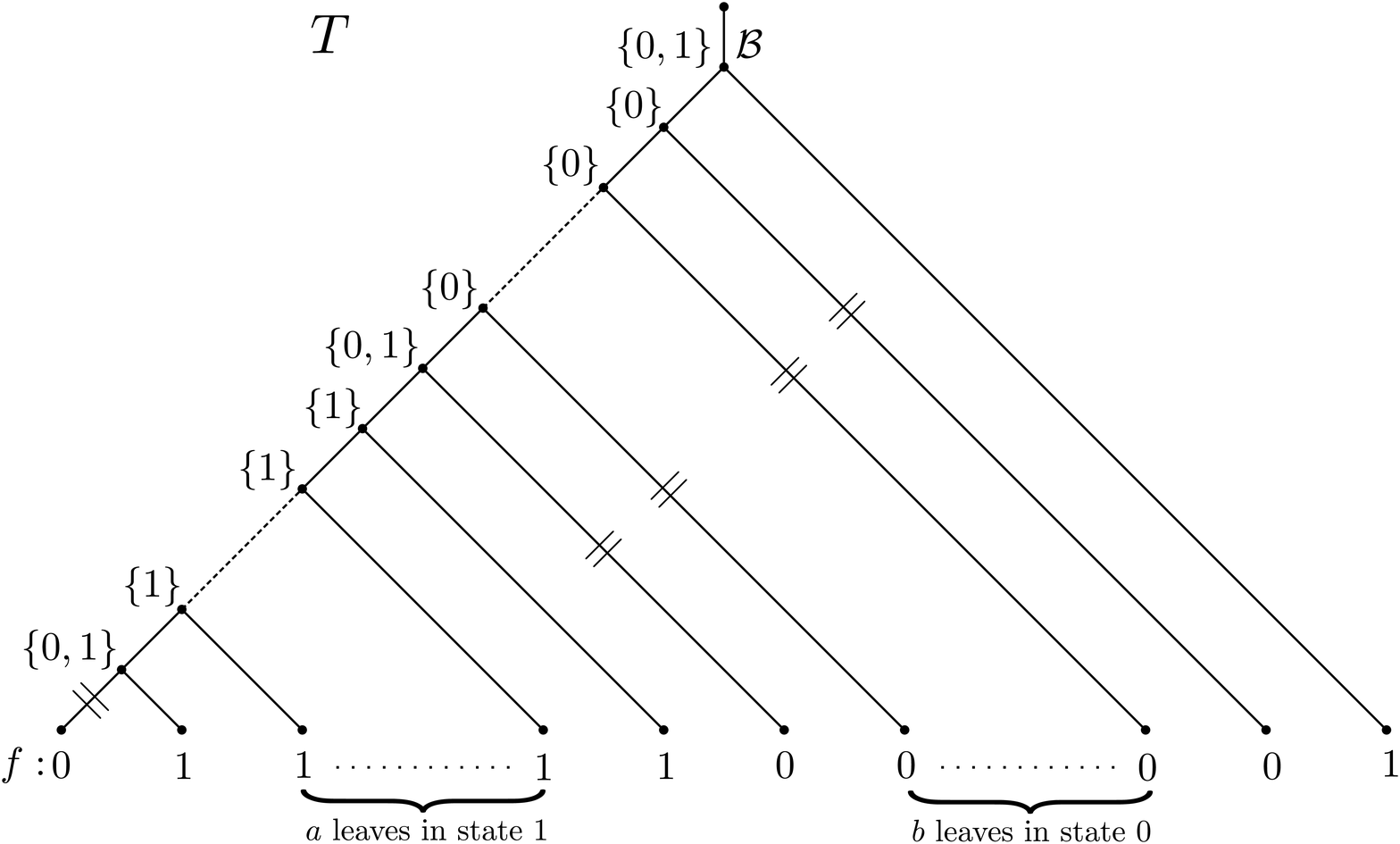} \\
    \includegraphics[scale=0.25]{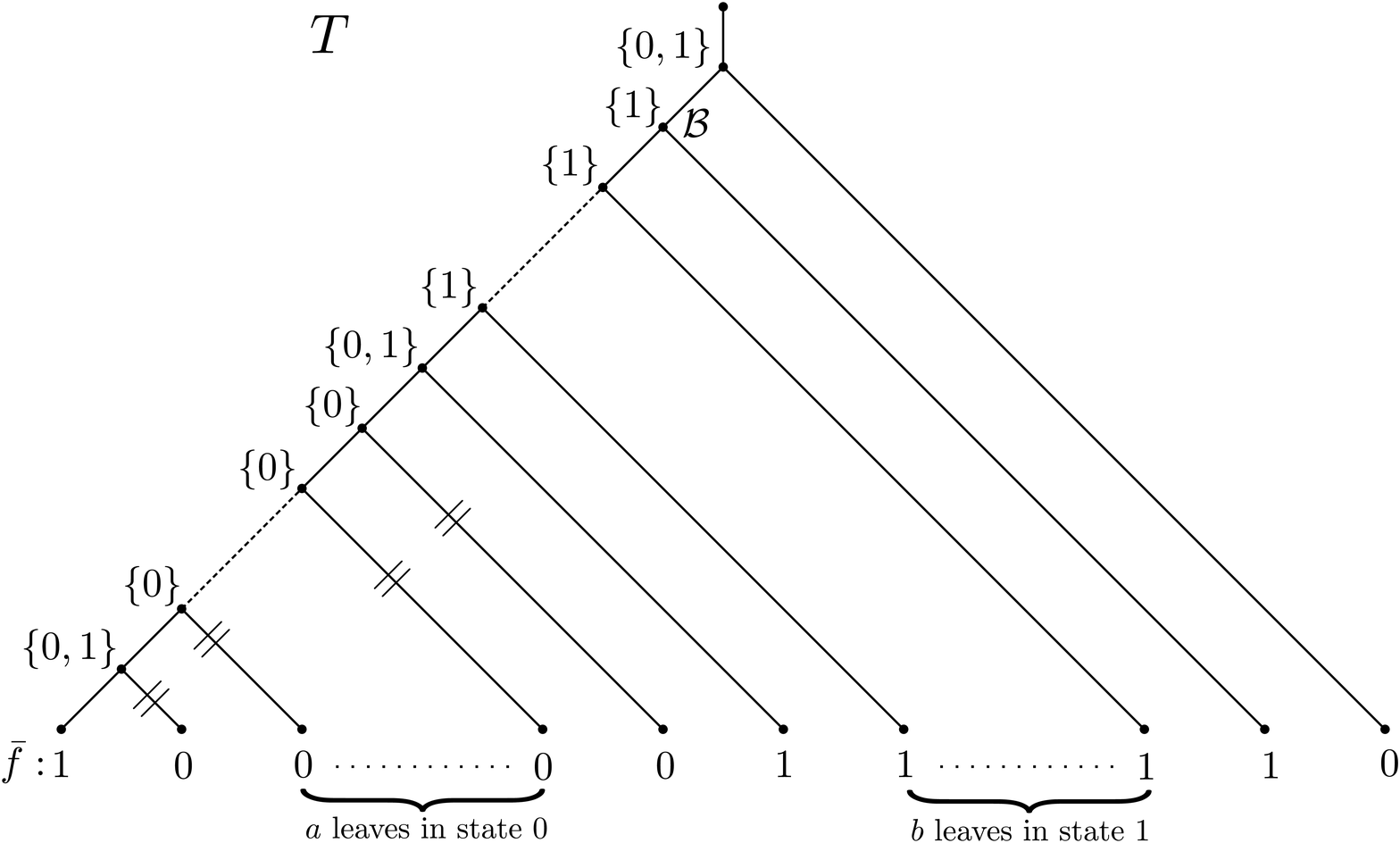}
    \caption{Rooted binary phylogenetic tree with $n=6+a+b$ leaves (leaf labels are omitted) and binary characters $f$ and $\bar{f}$. When applying the first phase of the Fitch algorithm to $f$ and $\bar{f}$, the union of $\{0\}$ and $\{1\}$ is taken three times (indicated by $\{0,1\}$ sets assigned to three nodes of $T$), respectively, and thus $l(f,T)=l(\bar{f},T)=3$. On the other hand, $k(f,T)=b+3$, and $k(\bar{f},T)=a+2$ as there are $b+3$, respectively $a+2$, death events required to realize $f$, respectively $\bar{f}$, on $T$ (namely, on all edges incident to leaves in state 0).}
    \label{Fig_f_fbar}
\end{figure}

\subsection{Relationship between the Sackin index and the number of Dollo-$k$ characters} \label{section_sackin}
A second striking difference between persistent characters and general Dollo-$k$ characters can be observed when counting the number of persistent characters, respectively general Dollo-$k$ characters, for a given tree $T$. 

For persistent characters, i.e. the union of Dollo-0 and Dollo-1 characters, it has been observed in \citep{Wicke2018} that there is a direct relationship between the so-called Sackin index \citep{Sackin1972}, an index of tree balance, and the number of persistent characters (Theorem 2 in \citep{Wicke2018}). In particular, the number of persistent characters of a rooted binary phylogenetic tree $T$ can directly be obtained by calculating the Sackin index of $T$, a task easy to accomplish.

Therefore, recall that the Sackin index of a rooted binary phylogenetic tree is defined as follows.
\begin{definition}[\citep{Sackin1972}] \label{Def_Sackin}
Let $T$ be a rooted binary phylogenetic tree. Then, its \emph{Sackin index} is defined as 
$$ \mathcal{S}(T) = \sum\limits_{u \in \mathring{V}(T)} n_u,$$
where $\mathring{V}(T)$ denotes the set of internal nodes of $T$ and $n_u$ denotes the number of leaves of the subtree of $T$ rooted at $u$.
\end{definition}

Note that the higher the Sackin index of a tree $T$, the more imbalanced it is. 
Now, in \citep{Wicke2018} the following relationship between the Sackin index and the number of persistent characters was  established.

\begin{theorem}[adapted from Theorem 2 in \citep{Wicke2018}] \label{Thm_Sackin}
Let $T$ be a rooted binary phylogenetic tree with $n$ leaves. Let $\cP(T)$ denote the number of persistent characters of $T$. Then, 
$$ \cP(T) = 2 \mathcal{S}(T) - 2n + 4.$$
\end{theorem}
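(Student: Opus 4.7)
The plan is to count Dollo-$0$ and Dollo-$1$ characters separately (they partition the set of persistent characters) and add the totals. The uniqueness of the Dollo-$k$ labeling from Theorem \ref{thm_dollo_labeling}, together with Corollary \ref{Dollo_count_lemma}, turns the enumeration of characters into an enumeration of combinatorial structures on $T$.

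For Dollo-$0$ characters, Theorem \ref{prop_dollo_score_1tree}(i) contributes every character with at most one state-1 leaf, yielding $n+1$ characters (the all-zero character together with one per leaf). For characters with at least two state-1 leaves and $k(f,T)=0$, Theorem \ref{prop_dollo_score_1tree}(ii) forces $\cT(f)$ to have exactly one degree-2 node, which must be the $\cB$-node; hence every other internal node of $\cT(f)$ has degree $3$ in $\cT(f)$, so $\cT(f)=T_v$ for $v$ equal to the $\cB$-node. This puts such characters in bijection with the $n-1$ internal nodes of $T$. In total there are $2n$ Dollo-$0$ characters.

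For Dollo-$1$ characters, Corollary \ref{Dollo_count_lemma} says each such $f$ has a unique maximal $\0$-node $w$, and Theorem \ref{thm_dollo_labeling} implies that $f$ is completely determined by the pair $(v,w)$, where $v$ is its $\cB$-node. Conversely, for any internal node $v$ with children $a,b$ and any proper descendant $w$ of $v$ with $w\notin\{a,b\}$ (the exclusion demanded by Remark \ref{remark_maximal0node}), assigning state $1$ to the leaves of $T_v$ that do not lie in $T_w$ produces a genuine Dollo-$1$ character with $\cB$-node $v$ and maximal $\0$-node $w$ (both $T_a$ and $T_b$ retain at least one state-$1$ leaf, so $v$ remains the MRCA of state-$1$ leaves). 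For fixed internal $v$, the subtree $T_v$ has $2n_v-1$ nodes, hence $2n_v-2$ proper descendants; removing the two children leaves $2n_v-4$ valid choices of $w$. Summing over internal nodes and using $\sum_{v\in\mathring V(T)} n_v = \mathcal S(T)$ together with $|\mathring V(T)|=n-1$ gives $2\mathcal S(T)-4(n-1)$ Dollo-$1$ characters.

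Adding the two counts gives $\cP(T)=2n+2\mathcal S(T)-4(n-1)=2\mathcal S(T)-2n+4$, as claimed. The main subtlety is the bijection for Dollo-$1$: one must verify that every pair $(v,w)$ in the stated range yields a character whose $\cB$-node is genuinely $v$ (and not, say, some ancestor of $v$ or the sibling of $w$). Forbidding $w$ from being a child of $v$ is exactly what guarantees that the other child of $v$ retains a state-$1$ descendant, so that $v$ remains the MRCA of state-$1$ leaves; this is the content of Remark \ref{remark_maximal0node} and is what produces the $-4$ correction in $2n_v-4$.
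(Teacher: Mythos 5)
Your proof is correct. Note that the paper itself does not prove this statement: it is imported from \citet{Wicke2018} as an adapted citation, so there is no internal proof to compare against line by line. What you have written is, in effect, the specialization to $k\in\{0,1\}$ of the counting machinery the paper develops later: your Dollo-$0$ count reproduces Theorem \ref{thm_counting}(i), and your bijection between Dollo-$1$ characters and pairs $(v,w)$ with $v$ the $\cB$-node and $w$ the unique maximal $\0$-node is exactly the $k=1$ case of Lemma \ref{Thm_sum_ik}, since $i_1(T_u)=2n_u-4$ for an internal node $u$ and $0$ for a leaf, giving $\cD_1(T)=\sum_{u\in\mathring{V}(T)}(2n_u-4)=2\mathcal{S}(T)-4(n-1)$. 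You correctly identify and handle the one genuine subtlety (excluding the children of $v$ so that $v$ remains the MRCA of the state-$1$ leaves, per Remark \ref{remark_maximal0node}), and both surjectivity and injectivity of the correspondence go through as you indicate. As a side remark, your value $\cD_1(T)=2\mathcal{S}(T)-4n+4$ also corrects a typo in Section \ref{section_sackin}, where the number of Dollo-$1$ characters is misstated as $2\mathcal{S}(T)+4$.
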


Note that as persistent characters are the union of Dollo-0 and Dollo-1 characters. It is easy to see that \emph{every} rooted binary phylogenetic tree $T$ with $n$ leaves induces $2n$ Dollo-0 characters (we will formally show this in Theorem \ref{Thm_sum_ik}). Thus, the correspondence between the Sackin index and the number of persistent characters is really a correspondence between the Sackin index and the number of Dollo-1 characters, and we have that the number of Dollo-1 characters equals $\cP(T)-2n = 2  \mathcal{S}(T)+4$. In particular, the more imbalanced a tree $T$ is (i.e. the higher its Sackin index), the more Dollo-1 characters it induces. However, this `trend' does not continue for $k \geq 2$. There neither seems to be a direct relationship between the Sackin index and the number of Dollo-2 characters nor between the Sackin index and the cardinality of the union of Dollo-0, Dollo-1 and Dollo-2 characters. As an example, consider trees $T_1, T_2$, and $T_3$ on 5 leaves depicted in Figure \ref{Fig_Balance}. We have, $\mathcal{S}(T_1)=12, \mathcal{S}(T_2)=13$ and $\mathcal{S}(T_3)=14$. In particular, $T_3$ has more Dollo-1 characters than $T_2$, and $T_2$ has more Dollo-1 characters than $T_1$. However, $T_3$ has fewer Dollo-2 characters than $T_1$ and $T_2$. Moreover, if we consider the union of Dollo-0, Dollo-1 and Dollo-2 characters, $T_3$ has fewer such characters than $T_2$ even though it is more imbalanced. Moreover, this example suggests that the highest $k$ such that $T$ induces Dollo-$k$ characters is not directly related to the Sackin index, either. For example, while $T_2$ is less balanced than $T_1$ and more balanced than $T_3$, it does not induce Dollo-3 characters, whereas \emph{both} $T_1$ and $T_3$ do. In this case, this is due to the fact that both $T_1$ and $T_3$ are semi-caterpillar trees. We will, however, see in Proposition \ref{Existence_Dollon-2} that there is a connection between the number of Dollo-$(n-2)$ characters and the shape of a tree. 
 
\begin{figure}[htbp]
    \centering
    \includegraphics[scale=0.25]{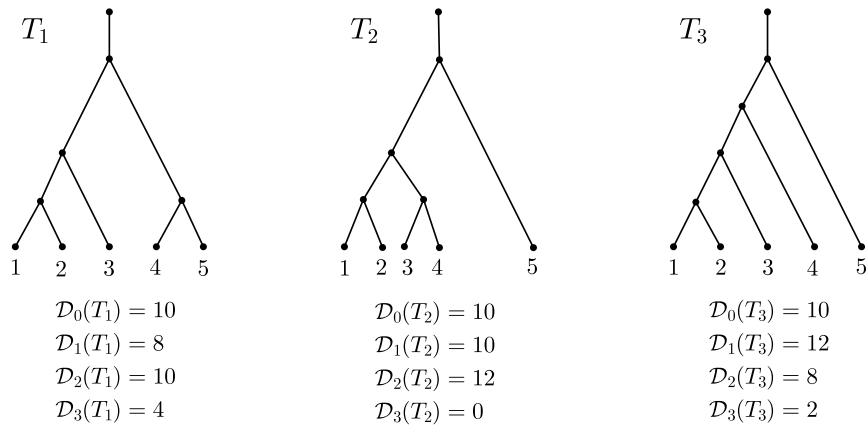}
    \caption{Rooted binary phylogenetic trees $T_1, T_2$ and $T_3$ with leaf set $\{1, \ldots, 5\}$. The trees are ordered according to their balance (from balanced to imbalanced according to the Sackin index). More precisely, we have $\mathcal{S}(T_1)=12$, $\mathcal{S}(T_2)=13$ and $\mathcal{S}(T_3)=14$. Note that for $n=5$, there are $2^5=32$ binary characters, and the numbers $\cD_k(T_i)$ indicate how many of them are Dollo-$k$ characters for $T_i$ with $i \in \{1,2,3\}$ and $k=0, \ldots, 3$.}
    \label{Fig_Balance}
\end{figure}

\section{Counting Dollo-$k$ characters}
The question of how many characters are persistent on given tree $T$ can easily be answered by calculating the Sackin index of $T$ and applying Theorem \ref{Thm_Sackin}. 
For general $k$, however, it turns out that determining the number of Dollo-$k$ characters is much more involved, and we need to introduce further definitions and notations to do so.

\subsection{Recursively computing the number of Dollo-$k$ characters}
In the following let $\cD_k(T)$ denote the number of Dollo-$k$ characters of $T$. 
\red{Note that if $T$ contains $n$ leaves, then $\sum_{k \in \mathbb{N}_{\geq 0}} \cD_k(T)=2^n$.
To see this, notice that there are $2^n$ binary characters of length $n$. Moreover, for each character $f$, the Dollo score $k(f,T)$ of $f$ on $T$ is a unique non-negative integer in $\{0, \ldots, n-2\}$ (cf. Proposition \ref{dollo_0_n-2}). In particular, each of the $2^n$ binary characters contributes to precisely one of the $\cD_k(T)$-values for $k \in \mathbb{N}_{\geq 0}$ (if $k(f,T) = k_f$, then $f$ is a Dollo-$k_f$ character and contributes to $\mathcal{D}_{k_f}(T)$). Thus, if we sum over the number of Dollo-$k$ characters of $T$ and range over all possible values of $k$, we get the total number of binary characters.}
Determining the $\cD_k(T)$-values themselves, however, is much more involved.

Before we can state the main theorem of this section showing how to calculate $\cD_k(T)$ recursively, we require two technical definitions.

\begin{definition}[Extended independent node set] \label{def_extendedindependent}
Let $T$ be a rooted binary phylogenetic tree with root $\rho$. A set of $k$ nodes $\{u_1, \ldots, u_k\}$ of $T$ such that
    \begin{enumerate}[\rm (i)]
        \item $u_s \neq \rho \, \forall s$,
        \item $u_s \text{ is not an ancestor of } u_t \, \forall s \neq t,$
        \item $u_s \text{ and } u_t \text{ are not siblings } \forall s \neq t$
    \end{enumerate}
is called an \emph{extended independent node set of size $k$ for $T$}. We use $E_k(T)$ to denote the set of all extended independent node sets of size $k$ for $T$, i.e.
\begin{align}
E_k(T) = \{ \{u_1, \ldots, u_k\} \, | \, &u_s \neq \rho \, \forall s, \nonumber \\
&u_s \text{ is not an ancestor of } u_t \, \forall s \neq t,\\
&u_s \text{ and } u_t \text{ are not siblings } \forall s \neq t \}, \nonumber 
\end{align}
and let $e_k(T) \coloneqq \vert E_k(T) \vert$ denote its cardinality. For technical reasons, we set $e_0(T) \coloneqq 1$.
\end{definition} 
Note that it is totally possible that $e_k(T)=0$. In particular, if $T$ contains only one leaf, then $e_k(T)=0$ for all $k \geq 1$. \\

Next to extended independent node sets, we also require the notion of \emph{independent node sets}, which are defined as follows.
\begin{definition}[Independent node set] \label{Def_independentnodeset}
Let $T$ be a rooted binary phylogenetic tree with root $\rho$. A set of $k$ nodes $\{u_1, \ldots, u_k\}$ of $T$ is called an \emph{independent node set of size $k$ for $T$} if it is an extended independent node set of size $k$ for $T$ that next to conditions (i)--(iii) (cf. Definition \ref{def_extendedindependent}) additionally satisfies
\begin{enumerate}
    \item [(iv)] $u_s$ is not a child of $\rho \, \forall \, s.$
\end{enumerate}
We use $I_k(T)$ to denote the set of independent node sets of size $k$ of $T$, i.e.
\begin{align}
I_k(T) = \{ \{u_1, \ldots, u_k\} \, | \, &u_s \neq \rho \text{ and $u_s$ is not a child of $\rho \, \forall s$,} \nonumber \\
&u_s \text{ is not an ancestor of } u_t \, \forall s \neq t, \\
&u_s \text{ and } u_t \text{ are not siblings } \forall s \neq t \}. \nonumber
\end{align}
Moreover, we use $i_k(T) \coloneqq \vert I_k(T) \vert$ to denote the number of independent node sets of size $k$ for $T$. For technical reasons, $i_0(T) \coloneqq 1$.
\end{definition}

\noindent Based on this, we can now state the main theorem of this section. This theorem states that the number of Dollo-$k$ characters a rooted binary phylogenetic tree $T$ induces can be calculated by considering extended independent node sets.
\begin{theorem}\label{thm_counting}
Let $T$ be a rooted binary phylogenetic tree with $n \geq 2$ leaves and let $\mathcal{D}_k(T)$ denote the number of Dollo-$k$ characters for $T$. Then,
\begin{enumerate}[\rm (i)]
    \item For $k=0$, we have $\cD_0(T)=2n$.
    \item For $k \geq 1$, we have:
    \begin{align}
    \cD_k(T) &= \sum\limits_{\substack{u \in V(T) \setminus \{\rho'\} \\ n_u \geq 2}}  \, \sum\limits_{i=0}^k e_i(T_u^1) \cdot e_{k-i}(T_u^2),
    \end{align}
    where $T_u=(T_u^1, T_u^2)$ denotes the standard decomposition of $T_u$ and where
    $$e_i(T_u^1) = \begin{cases}
    1, &\text{if $i=0$} \\
    0, &\parbox[t]{.5\textwidth}{if $i > 0$ and $T_u^1$ contains precisely one leaf.}  \\
    \sum\limits_{j=0}^i \left(e_j(T_u^{11}) \cdot e_{i-j}(T_u^{12}) \right) \\
    \quad + e_{i-1}(T_u^{11}) + e_{i-1}(T_u^{12}), &\parbox[t]{.5\textwidth}{if $i > 0$ and $T_u^1=(T_u^{11},T_u^{12})$ contains at least two leaves,}
    \end{cases} $$
    and where
    $$e_{k-i}(T_u^2) = \begin{cases}
    1 &\text{if $k-i=0$} \\
    0 &\parbox[t]{.4\textwidth}{if $k-i > 0$ and $T_u^2$ contains precisely one leaf.}  \\
    \sum\limits_{j=0}^{k-i} \left(e_j(T_u^{21}) \cdot e_{k-i-j}(T_u^{22}) \right) \\
    \quad + e_{k-i-1}(T_u^{21}) + e_{k-i-1}(T_u^{22}), &\parbox[t]{.4\textwidth}{if $k-i > 0$ and $T_u^2=(T_u^{21},T_u^{22})$ contains at least two leaves.}
    \end{cases} $$
\end{enumerate}
\end{theorem}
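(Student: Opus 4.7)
The plan is to dispatch part (i) directly using the existing characterisation of $k(f,T)=0$, and then build part (ii) around a bijection between Dollo-$k$ characters and pairs of extended independent node sets arising from the standard decomposition at a chosen $\cB$-node; the recursion for $e_i$ will follow from a short case analysis at the root. For part (i), Theorem \ref{prop_dollo_score_1tree} says that $k(f,T)=0$ exactly when $f$ assigns state 1 to at most one leaf, or when $f$ assigns state 1 to at least two leaves and $\cT(f)$ has exactly one degree-2 vertex. The first alternative produces the all-zeros character and the $n$ single-1 characters, contributing $n+1$. In the second alternative, the absence of further degree-2 vertices below the root of $\cT(f)$ forces $\cT(f)$ to coincide with a full subtree $T_u$ with $n_u\geq 2$; as $T$ has exactly $n-1$ internal nodes, this contributes $n-1$ further characters, for a total of $2n$.

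For part (ii), fix $k\geq 1$. By Corollary \ref{Dollo_count_lemma} every Dollo-$k$ character has exactly $k$ maximal $\0$-nodes, and these sit strictly below a uniquely determined $\cB$-node $u$ that must be internal with $n_u\geq 2$. Splitting these $\0$-nodes along the standard decomposition $T_u=(T_u^1,T_u^2)$, I plan to show, for each such $u$ and each $i\in\{0,\dots,k\}$, a bijection between Dollo-$k$ characters with $\cB$-node $u$ and exactly $i$ maximal $\0$-nodes in $T_u^1$, and pairs $(S_1,S_2)\in E_i(T_u^1)\times E_{k-i}(T_u^2)$. The forward direction is direct: maximality rules out ancestor-descendant and sibling relations among the $\0$-nodes (two siblings would merge into a strictly larger $\0$-clade at their parent), and $u$ being the MRCA of the state-1 leaves forces the root of each $T_u^j$ to carry state 1 and hence to be absent from $S_j$. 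The reverse map sends $(S_1,S_2)$ to the character assigning state 0 to every leaf outside $T_u$ and every leaf of $T_u$ descending from an element of $S_1\cup S_2$, and state 1 elsewhere.

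The main obstacle is checking that the reverse map lands where we want: that $u$ really becomes the $\cB$-node of the constructed character, and that its maximal $\0$-nodes are exactly $S_1\cup S_2$. Both reduce to the following auxiliary lemma, which I would establish by induction on the number of leaves using the standard decomposition: for any rooted binary phylogenetic tree $\widetilde{T}$ and any extended independent node set $S$ of $\widetilde{T}$, at least one leaf of $\widetilde{T}$ fails to descend from any element of $S$. The induction step exploits that the root of $\widetilde{T}$ is excluded from $S$ and that its two children cannot both lie in $S$ (they would be siblings), so the restriction of $S$ to whichever maximal pending subtree still contains elements of $S$ is itself an extended independent node set there and the hypothesis applies. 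Applied to $T_u^1$ and $T_u^2$ this lemma guarantees a state-1 leaf in each branch so that $u$ is the MRCA, and applied to subtrees below the $\cB$-node it rules out any stray maximal $\0$-node outside $S_1\cup S_2$.

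Finally, the recursion for $e_i$ on a subtree $\widetilde{T}=(\widetilde{T}^1,\widetilde{T}^2)$ with at least two leaves comes from partitioning extended independent node sets of size $i$ by which of the roots $r_1,r_2$ of the two branches are included. Both cannot be included (as siblings), and the three remaining cases contribute exactly $\sum_{j=0}^i e_j(\widetilde{T}^1)e_{i-j}(\widetilde{T}^2)$ (neither), $e_{i-1}(\widetilde{T}^2)$ (only $r_1$) and $e_{i-1}(\widetilde{T}^1)$ (only $r_2$), matching the stated recursion; the degenerate cases ($i=0$ and $\widetilde{T}$ a single leaf) follow from the conventions.
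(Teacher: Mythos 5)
Your proposal is correct and follows essentially the same route as the paper: it reduces $\cD_k(T)$ to a sum over candidate $\cB$-nodes $u$ of counts of pairs of extended independent node sets arising from the standard decomposition of $T_u$, and obtains the recursion for $e_i$ by the same case analysis on which children of the root may be included. The only substantive differences are your auxiliary lemma (every extended independent node set leaves at least one leaf uncovered), which explicitly verifies that the inverse of the bijection underlying the paper's Lemma \ref{Thm_sum_ik} is well defined --- a point the paper asserts without detailed proof --- and your equivalent derivation of $\cD_0(T)=2n$ via Theorem \ref{prop_dollo_score_1tree} instead of directly counting the $2n-1$ possible $\cB$-nodes.
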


In order to prove Theorem \ref{thm_counting}, we require several technical lemmas. The first one establishes a connection between the number of Dollo-$k$ characters (with $k \geq 1$) for a rooted binary phylogenetic tree $T$ and the number of independent node sets for $T$ and its subtrees.

\begin{lemma} \label{Thm_sum_ik}
Let $T$ be a rooted binary phylogenetic tree $T$ and let $\mathcal{D}_k(T)$ be the number of Dollo-$k$ characters for $T$ and $k \geq 1$. Let $i_k(T_u)$ denote the number of independent node sets of size $k$ for a subtree $T_u$ of $T$. Then,
\begin{align*}
\mathcal{D}_k(T) &= \sum\limits_{u \in V(T) \setminus \{\rho'\}} i_k(T_u).
\end{align*}
\end{lemma}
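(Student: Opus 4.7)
The plan is to establish a bijection between Dollo-$k$ characters on $T$ (with $k \geq 1$) and pairs $(u, S)$, where $u \in V(T) \setminus \{\rho'\}$ and $S \in I_k(T_u)$. Summing $i_k(T_u)$ over $u$ then yields the identity; contributions from leaf $u$ vanish automatically, since $I_k(T_u) = \emptyset$ whenever $T_u$ is a single node and $k \geq 1$.

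For the forward direction, given a Dollo-$k$ character $f$, I would take $u$ to be its $\cB$-node (which lies in $V(T) \setminus \{\rho'\}$, because $\rho'$ is never the MRCA of a set of leaves) and $S$ to be the set of maximal $\0$-nodes of $f$, so $|S| = k$ by Corollary \ref{Dollo_count_lemma}. To verify $S \in I_k(T_u)$: each element of $S$ is a descendant of $u$, and by Remark \ref{remark_maximal0node} is neither $u$ itself nor a child of $u$, giving conditions (i) and (iv); if one maximal $0$-node were an ancestor of another, the latter would not be maximal, yielding (ii); and two sibling maximal $0$-nodes would force their shared parent to be a strictly larger $0$-clade, yielding (iii).

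For the reverse direction, given a pair $(u, S)$, I would define $f$ by $f(x) = 1$ precisely when $x$ is a leaf of $T_u$ not descending from any element of $S$. The key auxiliary claim, proved by induction on subtree size, is that any $v \in V(T_u)$ with $v \notin S$ and no $T_u$-ancestor in $S$ has a descendant leaf in state $1$; the inductive step splits on whether one of the two children of $v$ lies in $S$, crucially using that both cannot lie in $S$ by the sibling condition (iii). Applied to the two children of $u$ (neither in $S$ by (iv)), this shows both children of $u$ have state-$1$ descendants, so the $\cB$-node of $f$ is exactly $u$. Applied to the sibling of each $v \in S$, it shows that the parent of $v$ (which lies in $T_u$ because $v$ is not a child of $u$) has a state-$1$ descendant, so $v$ is a maximal $\0$-node.

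The main obstacle is then ruling out ``extra'' maximal $\0$-nodes outside $S$. Let $v$ be any maximal $\0$-node. A strict ancestor $s \in S$ of $v$ is impossible: all descendants of $s$ then lie in $T_u$ and descend from $s$, making $s$ itself a $0$-clade strictly containing $v$ and contradicting maximality. Hence every descendant leaf of $v$ descends from some element of $S \cap T_v$, and a second induction on $|T_v|$ shows $v \in S$: if $v$ were internal with $v \notin S$, the same property would hold for each of its children, and by induction both children would lie in $S$, violating (iii). Thus the maximal $\0$-nodes of $f$ are exactly $S$, so by Corollary \ref{Dollo_count_lemma}, $k(f,T) = |S| = k$, and $f$ is Dollo-$k$ with $\cB$-node $u$. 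Summing $|I_k(T_u)|$ over $u \in V(T) \setminus \{\rho'\}$ therefore gives $\cD_k(T)$.
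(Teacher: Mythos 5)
Your proof is correct and follows essentially the same route as the paper: both identify a Dollo-$k$ character with the pair consisting of its $\cB$-node $u$ and its set of $k$ maximal $\0$-nodes, and then match such sets with the independent node sets of $T_u$ using Corollary \ref{Dollo_count_lemma} and Remark \ref{remark_maximal0node}. The only difference is that you verify in detail (via your two inductive arguments) that the reverse construction yields a character whose $\cB$-node is exactly $u$ and whose maximal $\0$-nodes are exactly $S$ --- a converse step the paper's proof asserts without elaboration.
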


\begin{proof}
By Corollary \ref{Dollo_count_lemma}, the Dollo score $k(f,T)$ equals the number of maximal $\0$-nodes in $T$ (which are by definitions descendants of the $\cB$-node).

\noindent Recall that maximal $\0$-nodes have the following properties:
\begin{enumerate}[\rm (a)]
   \item All maximal $\0$-nodes have the $\cB$-node as an ancestor (by definition) but they cannot be children of the $\cB$-node or the $\cB$-node itself (cf. Remark \ref{remark_maximal0node}).
   \item If $u$ and $v$ are both maximal $\0$-nodes, $u$ cannot be an ancestor of $v$ or vice versa (otherwise, one of them would not be a maximal $\0$-node).
   \item If $u$ and $v$ are both maximal $\0$-nodes, they cannot be siblings (if $u$ and $v$ were siblings, they would not be maximal $\0$-nodes but their parent would be a maximal $\0$-node).
\end{enumerate}

\noindent Comparing properties (a)--(c) with Definition \ref{Def_independentnodeset}, this implies that every set of $k$ maximal $\0$-nodes forms an independent node set of size $k$ for the subtree of $T$ rooted at the $\cB$-node. Conversely, any independent node set of size $k$ for $T_{\cB}$ (i.e. for the subtree of $T$ rooted at $\cB$ can be considered as a set of $k$ maximal $\0$-nodes). \\

\noindent Now let $k \geq 1$. In order to count the number of Dollo-$k$ characters $T$ induces, we need to count the number of ways to choose one node of $T$ as the $\cB$-node and $k$ additional nodes as maximal $\0$-nodes. 
Each such choice of $k+1$ nodes in total yields precisely one Dollo-$k$ character (namely the one where all leaves not descending from the $\cB$-node are assigned state 0, all leaves descending from the $\cB$-node but not from any of the $k$ maximal $\0$-nodes are assigned state 1, and all remaining leaves are assigned state 0). Now, if we fix the $\cB$-node, there are $i_k(T_\cB)$ many independent node sets of size $k$ for $T_\cB$ and thus $i_k(T_\cB)$ sets of $k$ maximal $\0$-nodes below the $\cB$-node. 
As all nodes of $T$ except for $\rho'$ can potentially be the $\cB$-node, summing over all nodes $u \in V(T) \setminus \{\rho'\}$ and adding up the number of independent node sets of size $k$, respectively, yields the number $\cD_k(T)$ of Dollo-$k$ characters for $T$. 
Thus, $\cD_k(T) =  \sum_{u \in V(T) \setminus \{\rho'\}} i_k(T_u)$ as claimed. This completes the proof.
\end{proof}

\begin{example}
Consider the rooted binary phylogenetic tree $T$ with 5 leaves depicted in Figure \ref{Fig_IndependentSets} and let $k=3$. Then, by Lemma \ref{Thm_sum_ik}
\begin{align*}
\cD_3(T) &= \sum\limits_{u \in V(T) \setminus \{\rho'\}} i_3(T_u) = i_3(T_{\rho}) + i_3(T_a) + i_3(T_b) + i_3(T_c) + i_3(T_1) + \ldots + i_3(T_5) \\
&= \left\vert \left\lbrace \{1,3,4\}, \, \{1,3,5\}, \, \{2,3,4\}, \, \{2,3,5\}  \right\rbrace \right\vert + \left\vert \emptyset \right\vert + \ldots + \left\vert \emptyset \right\vert \\
&\quad \text{ (as there are no independent node sets of size 3 for } T_a, T_b, T_c, T_1, \ldots, T_5) \\
&= 4.
\end{align*}
Thus, $T$ induces four Dollo-3 characters. These are $f_1=01001$, $f_2=01010$, $f_3=10001$, and $f_4=10010$.
Analogously, $\mathcal{D}_1(T) = 8$ and $\mathcal{D}_2(T)=10$. Moreover, $\mathcal{D}_0(T) = 2n = 10$.
Note that $\mathcal{D}_k(T) = 0$ for $k > 3=n-2$ by Proposition \ref{dollo_0_n-2}.
\end{example}

\begin{figure}[h!]
	\centering
	\includegraphics[scale=0.25]{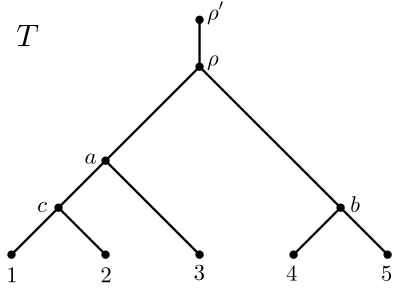}
	\caption{Rooted binary phylogenetic tree $T$ with leaf set $\{1,\ldots,5\}$. Exemplarily, $I_3(T_{\rho}) = \left\lbrace \{1,3,4\}, \, \{1,3,5\}, \, \{2,3,4\}, \, \{2,3,5\} \right\rbrace$ and $i_3(T_{\rho})=4$.}
	\label{Fig_IndependentSets}
\end{figure}

The next lemma states that the number of independent node sets for a rooted binary phylogenetic tree $T$ can be obtained by calculating the numbers of extended independent node sets for its maximal pending subtrees.

\begin{lemma} \label{cor_number_independent_extended}
Let $T = (T_a, T_b)$ be a rooted binary phylogenetic tree with $n \geq 2$ leaves and root $\rho$, where $T_a$ and $T_b$ are the two maximal pending subtrees of $T$ rooted at the children $a$ and $b$ of $\rho$. Then, 
\begin{align*}
i_k(T) = \sum\limits_{i=0}^k  e_i(T_a) \cdot e_{k-i}(T_b),
\end{align*}
where $e_0(T_a) = e_0(T_b)=1$. 
\end{lemma}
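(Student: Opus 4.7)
The plan is to establish a bijection between independent node sets of size $k$ for $T$ and pairs of extended independent node sets for $T_a$ and $T_b$ whose sizes sum to $k$, and then count.

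First, I would observe that since $V(T) = \{\rho\} \cup V(T_a) \cup V(T_b)$ (disjoint union of node sets, excluding $\rho'$ which is not part of $V(T)$ in the subtree context, though this does not affect the argument) and $\rho \notin S$ for any $S \in I_k(T)$, every $S \in I_k(T)$ decomposes uniquely as $S = S_a \sqcup S_b$ with $S_a \coloneqq S \cap V(T_a)$ and $S_b \coloneqq S \cap V(T_b)$.

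Next, I would verify that this decomposition lands in $E_i(T_a) \times E_{k-i}(T_b)$ for $i = |S_a|$. Let me check the defining properties from Definition \ref{def_extendedindependent} for $S_a$ as a subset of $V(T_a)$: (i) the root of $T_a$ is $a$, which is a child of $\rho$ in $T$, so $a \notin S$ by condition (iv) of Definition \ref{Def_independentnodeset}, hence $a \notin S_a$; (ii) ancestor-freeness and (iii) non-sibling within $T_a$ are inherited from the same properties of $S$ in $T$, since the ancestor and sibling relations restricted to $V(T_a)$ coincide with those of the subtree $T_a$. The analogous verification applies to $S_b$.

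Conversely, given any pair $(S_a, S_b) \in E_i(T_a) \times E_{k-i}(T_b)$, I would show that $S \coloneqq S_a \cup S_b$ belongs to $I_k(T)$. Indeed, $S$ has size $k$, and: $\rho \notin S$ because $\rho \notin V(T_a) \cup V(T_b)$; $a \notin S$ and $b \notin S$ by condition (i) of Definition \ref{def_extendedindependent} applied in $T_a$ and $T_b$, so no element of $S$ is a child of $\rho$; no element of $S_a$ can be an ancestor or sibling of any element of $S_b$ (and vice versa) because they lie in disjoint subtrees whose only common ancestor in $T$ is $\rho$ or above; the ancestor-freeness and non-sibling conditions within $S_a$ and within $S_b$ transfer directly to $T$.

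These two constructions are clearly mutually inverse, so they give a bijection
\[
I_k(T) \;\longleftrightarrow\; \bigsqcup_{i=0}^{k} \bigl( E_i(T_a) \times E_{k-i}(T_b) \bigr).
\]
Taking cardinalities yields the claimed identity, where the boundary conventions $e_0(T_a) = e_0(T_b) = 1$ (from Definition \ref{def_extendedindependent}) correctly account for the cases $i = 0$ and $i = k$. The main obstacle is really just the bookkeeping for the edge conditions at $\rho$, $a$, and $b$: condition (iv) for $T$ translates precisely into condition (i) for each subtree because the roots of $T_a$ and $T_b$ are exactly the forbidden children of $\rho$, which is why extended independent sets (not ordinary independent sets) appear on the subtrees.
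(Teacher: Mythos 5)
Your proof is correct and follows essentially the same route as the paper: both arguments decompose an independent node set of $T$ into its intersections with $V(T_a)$ and $V(T_b)$, check that condition (iv) at the root of $T$ translates into condition (i) at the roots $a$ and $b$ of the subtrees (which is exactly why \emph{extended} independent node sets appear), verify the converse direction, and count. Your explicit phrasing as a bijection with the disjoint union over $i$ is just a slightly cleaner packaging of the paper's two-directional correspondence.
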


\begin{proof}
In order to prove Lemma \ref{cor_number_independent_extended}, we first show that a set $\{u_1, \ldots, u_k\}$ of nodes is an independent node set of size $k$ for $T$ if and only if it is the union of an extended independent node set of size $i$ for $T_a$ and an extended independent node set of size $k-i$ for $T_b$ (where $i \in \mathbb{N}_0$). \\

\noindent Let $\mathcal{I}_k = \{u_1, \ldots, u_k\} \in I_k(T)$ be an independent node set of size $k$ for $T$.
Now, suppose that $i$ of the $k$ nodes in $\mathcal{I}_k$, without loss of generality $u_1, \ldots, u_i$, are in $T_a$ and $k-i$ nodes, without loss of generality $u_{i+1}, \ldots, u_k$, are in $T_b$, i.e.
$$ \mathcal{I}_k = \{u_1, \ldots, u_k\} = \underbrace{\{u_1, \ldots, u_i\}}_{\in V(T_a)} \, \cup \, \underbrace{\{u_{i+1}, \ldots, u_k\}}_{\in V(T_b)}.$$
Then, as $\mathcal{I}_k$ by definition does not contain nodes $a$ and $b$, in particular the set $\{u_1, \ldots, u_i\}$ does not contain $a$ and is thus an extended independent node set of size $i$ of $T_a$, i.e. $\{u_1, \ldots, u_i\} \in E_i(T_a)$. Analogously, the set $\{u_{i+1}, \ldots, u_k\}$ is an extended independent node set of size $k-i$ of $T_b$, i.e. $\{u_{i+1}, \ldots, \} \in E_j(T_b)$. In particular, $\mathcal{I}_k$ is the union of an extended independent node set of size $i$ of $T_a$ and an extended independent node set of size $k-i$ of $T_b$. \\

\noindent Now, suppose that $\mathcal{E}^a_i = \{a_1, \ldots, a_i\} \in E_i(T_a)$ is an arbitrary extended independent node set of size $i$ for $T_a$ and $\mathcal{E}^b_{k-i} = \{b_1, \ldots, b_{k-i}\} \in E_{k-i}(T_b)$ is an arbitrary extended independent node set of size $k-i$ for $T_b$. 
Consider $\mathcal{I} = \mathcal{E}_i^a \, \cup \, \mathcal{E}_{k-i}^b = \{a_1, \ldots, a_i, b_1, \ldots, b_{k-i}\}$. Then, $\mathcal{I}$ is an independent node set of size $k$ for $T$. To see this, consider the following:
	\begin{itemize}
	\item As $i+(k-i)=k$ and as $\mathcal{E}_i^a \, \cap \, \mathcal{E}_{k-i}^b = \emptyset$, $\mathcal{I}$ contains $k$ nodes (i.e. it has the correct size).
	\item As $\mathcal{E}_i^a$ by definition does not contain node $a$ and $\mathcal{E}_{k-i}^b$ by definition does not contain node $b$ and as neither $\mathcal{E}_i^a$ nor $\mathcal{E}_{k-i}^b$ can contain node $u$ (as $u$ is not contained in $T_a$ or $T_b$), we have $a, b, u \notin \mathcal{I}$.
	\item As $\mathcal{E}_i^a$ and $\mathcal{E}_{k-i}^b$ are extended independent node sets, 
		\begin{itemize}
		\item $a_s$ is not an ancestor of $a_t$ and $b_s$ is not an ancestor of $b_t$ for all $s \neq t$,
		\item neither $a_s$ and $a_t$ nor $b_s$ and $b_t$ are siblings for all $s \neq t$.
		\end{itemize}
		Moreover, a node $a' \in \mathcal{E}_i^a$ cannot be an ancestor of a node $b' \in \mathcal{E}_{k-i}^b$ in $T_u$ and vice versa (since $a'$ is in $T_a$ and $b'$ is in $T_b$). 
		Similarly, $a'$ and $b'$ cannot be siblings in $T_u$ (note that nodes $a$ and $b$ are siblings in $T_u$, but $a \notin \mathcal{E}_i^a$ and $b \notin \mathcal{E}_{k-i}^b$).
	\end{itemize}
Thus, $\mathcal{I}$ is an independent node set of size $k$ for $T_u$. \\

\noindent In summary, every independent node set of size $k$ of a rooted binary phylogenetic tree $T$ with $n \geq 2$ leaves corresponds to a union of extended independent node sets of suitable size of its two maximal pending subtrees and each such union of extended independent node sets yields an independent node set of size $k$ for $T$. This directly implies $$i_k(T) = \sum\limits_{i=0}^k  e_i(T_a) \cdot e_{k-i}(T_b)$$ as claimed.
\end{proof}

\begin{example} \label{Example_Independent_Extended}
Consider the rooted binary phylogenetic tree $T$ on 5 leaves depicted in Figure \ref{Fig_IndependentSets}. Here, we have $I_3(T) = \left\lbrace \{1,3,4\}, \, \{1,3,5\}, \, \{2,3,4\}, \, \{2,3,5\} \right\rbrace$, i.e. $i_3(T)=4$. 
Using Lemma \ref{cor_number_independent_extended} this can be verified as follows:
For the two maximal pending subtrees $T_a$ and $T_b$ of $T$, we have: $e_0(T_a)=e_0(T_b)=1$ (by definition), $e_1(T_a)=4$ (as $E_1(T_a) = \left\lbrace \{1\}, \{2\}, \{3\}, \{c\} \right\rbrace$), $e_1(T_b)=2$ (as $E_1(T_b) = \left\lbrace \{4\}, \{5\} \right\rbrace$), $e_2(T_a)=2$ (as $E_2(T_a) = \left\lbrace \{1,3\}, \{2,3\} \right\rbrace$) and $e_i(T_a) = e_j(T_b) = 0$ for $i > 2$ and $j > 1$. Thus, using Lemma \ref{cor_number_independent_extended}, we have
\begin{align*}
i_3(T) &= \sum\limits_{i=0}^3 e_i(T_a) \cdot e_{3-i}(T_b) \\
&= e_0(T_a) \cdot e_3(T_b) + e_1(T_a) \cdot e_2(T_b) + e_2 (T_a) \cdot e_1(T_b) + e_3(T_a) \cdot e_0(T_b) \\
&= 1 \cdot 0 + 4 \cdot 0 + 2 \cdot 2 + 0 \cdot 1 = 4.
\end{align*}
\end{example}

Lemma \ref{cor_number_independent_extended} implies that the calculation of the number of independent node sets of size $k$ of a rooted tree $T$ can be reduced to the calculation of the number of extended independent node sets of a certain size of its two maximal pending subtrees. The next lemma shows how the latter can be computed.

\begin{lemma} \label{extended_number_recursion}
Let $T = (T_a, T_b)$ be a rooted binary phylogenetic tree with $n \geq 2$ leaves and root $\rho$, where $T_a$ and $T_b$ are the two maximal pending subtrees of $T$ rooted at the children $a$ and $b$ of $\rho$. Then, 
\begin{align*}
e_k(T) = \sum\limits_{i=0}^k  e_i(T_a) \cdot e_{k-i}(T_b) + e_{k-1}(T_a) + e_{k-1}(T_b),
\end{align*}
where $e_0(T_a) = e_0(T_b)=1$. 
\end{lemma}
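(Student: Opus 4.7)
The plan is to partition $E_k(T)$ according to which of the two children $a, b$ of $\rho$ appear in an extended independent node set. Since neither $a$ nor $b$ equals $\rho$, condition~(i) does not exclude them; this is precisely the feature that distinguishes extended from ordinary independent node sets. However, $a$ and $b$ are siblings in $T$, so condition~(iii) forbids them from appearing together. This leaves three mutually exclusive and exhaustive subcases, and the plan is to count each separately.

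First, for sets $\mathcal{E} \in E_k(T)$ with $a, b \notin \mathcal{E}$, I would show that $\mathcal{E}$ decomposes uniquely as a disjoint union $\mathcal{E}_a \cup \mathcal{E}_b$, where $\mathcal{E}_a = \mathcal{E} \cap V(T_a)$ and $\mathcal{E}_b = \mathcal{E} \cap V(T_b)$, with $\mathcal{E}_a \in E_i(T_a)$ and $\mathcal{E}_b \in E_{k-i}(T_b)$ for some $0 \leq i \leq k$, and that conversely every such pair gives a valid element of $E_k(T)$. The forward direction is immediate because the ancestor and sibling relations restricted to $V(T_a) \setminus \{a\}$ (respectively $V(T_b) \setminus \{b\}$) agree in $T$ and in $T_a$ (respectively $T_b$). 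For the converse, the key observation is that no node of $V(T_a) \setminus \{a\}$ can be an ancestor, descendant, or sibling in $T$ of any node of $V(T_b) \setminus \{b\}$, so no new violations of conditions (ii) or (iii) are introduced by taking the union. Summing over $i$ yields the contribution $\sum_{i=0}^{k} e_i(T_a) \cdot e_{k-i}(T_b)$.

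Next, consider $\mathcal{E} \in E_k(T)$ with $a \in \mathcal{E}$ and $b \notin \mathcal{E}$. By condition~(ii), no other node of $\mathcal{E}$ can be a descendant of $a$, and no node (other than $\rho$, which is excluded) can be an ancestor of $a$; by condition~(iii) the sibling $b$ of $a$ cannot appear, consistent with our case assumption. Hence $\mathcal{E} \setminus \{a\} \subseteq V(T_b) \setminus \{b\}$, and it is straightforward to verify that it forms an extended independent node set of size $k-1$ for $T_b$. Conversely, appending $a$ to any element of $E_{k-1}(T_b)$ yields a valid element of $E_k(T)$, so this case contributes $e_{k-1}(T_b)$. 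By the symmetric argument, the case $b \in \mathcal{E}$, $a \notin \mathcal{E}$ contributes $e_{k-1}(T_a)$.

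Summing the three contributions gives the claimed formula. The main technical care is needed in the first case: one must verify that arbitrary extended independent node sets for $T_a$ and $T_b$, which may include children of $a$ and $b$ respectively, can always be combined without creating forbidden ancestor or sibling pairs spanning the two halves. This obstacle dissolves once one observes that the only siblings straddling the decomposition are $a$ and $b$ themselves, neither of which appears in a combined set arising from this case.
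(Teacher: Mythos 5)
Your proof is correct and takes essentially the same approach as the paper: both partition the extended independent node sets of $T$ according to whether $a$, $b$, or neither belongs to the set (noting both cannot, as siblings), establish the corresponding bijections with $E_i(T_a)\times E_{k-i}(T_b)$, $E_{k-1}(T_b)$, and $E_{k-1}(T_a)$, and sum the three contributions.
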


\begin{proof}
In order to prove Lemma \ref{extended_number_recursion}, we show that 
a set $\{u_1, \ldots, u_k\}$ is an extended independent node set of size $k$ for $T$ if and only if one of the following holds:
	\begin{itemize}
	\item $\{u_1, \ldots, u_k\}$ is the union of an extended independent node set of size $i$ for $T_a$ and an extended independent node set of size $k-i$ for $T_b$ (where $i \in \mathbb{N}_0$);
	\item $\{u_1, \ldots, u_k\}$ is the union of $\{a\}$ and an extended independent node set of size $k-1$ for $T_b$;
	\item $\{u_1, \ldots, u_k\}$ is the union of $\{b\}$ and an extended independent node set of size $k-1$ for $T_a$.
	\end{itemize}
	
\noindent Let $\mathcal{E}_k = \{u_1, \ldots, u_k\} \in E_k(T)$ be an extended independent node set of size $k$ for $T$.
Suppose that $i$ of the $k$ nodes in $\mathcal{E}_k$, without loss of generality $u_1, \ldots, u_k$, are in $T_a$ and $k-i$ nodes, without loss of generality $u_{i+1}, \ldots, u_k$ are in $T_b$, i.e.
$$ \mathcal{E}_k = \{u_1, \ldots, u_k\} = \underbrace{\{u_1, \ldots, u_i\}}_{\in V(T_a)} \, \cup \, \underbrace{\{u_{i+1}, \ldots, u_k\}}_{\in V(T_b)}.$$
First, assume that $a, b \notin \mathcal{E}_k$. Then, by definition, the set $\{u_1, \ldots, u_i\}$ is an extended independent node set of size $i$ for $T_a$, i.e. $\{u_1, \ldots, u_i\} \in E_i(T_a)$. Analogously, the set $\{u_{i+1}, \ldots, u_k\}$ is an extended independent node set of size $k-i$ for $T_b$, i.e. $\{u_{i+1}, \ldots, u_k\} \in E_{k-i}(T_b)$. In particular, $\mathcal{E}_k$ is the union of an extended independent node set of size $i$ for $T_a$ and an extended independent node set of size $k-i$ for $T_a$. 

Now, assume that $a \in \mathcal{E}_k$. This implies that $b \notin \mathcal{E}_k$ (since $a$ and $b$ are siblings in $T$). Moreover, it implies that no other node in $\mathcal{E}_k$ can be contained in $T_a$ (since $a$ is the ancestor of all nodes in $T_a$). Without loss of generality suppose that $u_1 = a$, i.e.
$$ \mathcal{E}_k = \{u_1, \ldots, u_k\} = \underbrace{\{a\}}_{\in V(T_a)} \, \cup \, \underbrace{\{u_2, \ldots, u_k\}}_{\in V(T_b) \setminus\{b\}}.$$
Then, by definition, $\{u_2, \ldots, u_k\}$ is an extended independent node set of size $k-1$ of $T_b$. In particular, $\mathcal{E}_k$ is the union of $\{a\}$ and an extended independent node set of size $k-1$ of $T_b$. 

Analogously, it can be shown that if $b \in \mathcal{E}_k$, $\mathcal{E}_k$ is the union of $\{b\}$ and an extended independent node set of size $k-1$ of $T_a$.\\

\noindent Conversely, first suppose that $\mathcal{E}_i^a = \{a_1, \ldots, a_i\} \in E_i(T_a)$ is an arbitrary extended independent node set of size $i$ for $T_a$ and $\mathcal{E}_{k-i}^b = \{b_1, \ldots, b_{k-i}\} \in E_{k-i}(T_b)$ is an arbitrary extended independent node set of size $k-i$ for $T_b$. 
Consider $\mathcal{E} = \mathcal{E}_i^a \, \cup \, \mathcal{E}_{k-i}^b = \{a_1, \ldots, a_i, b_1, \ldots, b_j\}$. Then, $\mathcal{E}$ is an extended independent node set of size $k$ of $T$, since:
	\begin{itemize}
	\item $\mathcal{E}$ contains $k$ nodes since $i+(k-i)=k$ and $\mathcal{E}_i^a \, \cap \, \mathcal{E}_{k-i}^b = \emptyset$,
	\item $\mathcal{E}$ does not contain node $\rho$ (since neither $\mathcal{E}_i^a$ nor $\mathcal{E}_{k-i}^b$ contain $\rho$),
	\item As $\mathcal{E}_i^a$ and $\mathcal{E}_{k-i}^b$ are extended independent node sets,
		\begin{itemize}
		\item $a_s \in \mathcal{E}_i^a$ is not an ancestor or sibling of $a_t \in \mathcal{E}_i^a$ for all $s \neq t$,
		\item $b_s \in \mathcal{E}_{k-i}^b$ is not an ancestor or sibling of $b_t \in \mathcal{E}_{k-i}^b$ for all $s \neq t$.
		\end{itemize}
		Moreover, a node $a' \in \mathcal{E}_i^a$ cannot be an ancestor of a node $b' \in \mathcal{E}_{k-i}^b$ in $T$ (and vice versa), since $a' \in T_a$ and $b' \in T_b$. Similarly, $a'$ and $b'$ cannot be siblings in $T$ (note that nodes $a$ and $b$ are siblings in $T$, but by definition $a \notin \mathcal{E}_i^a$ and $b \notin \mathcal{E}_{k-i}^b$).
	\end{itemize}
	Thus, $\mathcal{E}$ is an extended independent node set of size $k$ for $T$.
	
	Now, suppose that $\mathcal{E}_{k-1}^b = \{b_1, \ldots, b_{k-1}\} \in E_{k-1}(T_b)$ is an arbitrary extended independent node set of size $k-1$ for $T_b$ and consider $\mathcal{E} = \{a\} \, \cup \, \mathcal{E}_{k-1}^b = \{a, b_1, \ldots, b_{k-1}\}$. Then, again, $\mathcal{E}$ is an extended independent node set of size $k$ for $T_u$ since:
		\begin{itemize}
		\item $\mathcal{E}$ contains $1+(k-1)=k$ nodes,
		\item $\rho \notin \mathcal{E}$ (since $a \neq \rho$ and $\mathcal{E}_{k-1}^b$ cannot contain node $\rho$),
		\item $b_s \in \mathcal{E}_{k-1}^b$ is not an ancestor or sibling of $b_t \in \mathcal{E}_{k-1}^b$. Moreover, $a$ is not an ancestor or sibling of a node $b_i \in \mathcal{E}_{k-1}^b$ (or vice versa). 
		\end{itemize}
	Thus, $\mathcal{E}$ is an extended independent node set of size $k$ for $T$.
	
	Analogously this follows for $\mathcal{E} = \{b\} \, \cup \, \mathcal{E}_{k-1}^a$, where $\mathcal{E}_{k-1}^a \in E_{k-1}(T_a)$ is an arbitrary extended independent node set of size $k-1$ for $T_a$.\\
	
\noindent From this one-to-one correspondence it now directly follows that 
\begin{align*}
e_k(T) = \sum\limits_{i=0}^k  e_i(T_a) \cdot e_{k-i}(T_b)  + e_{k-1}(T_a) + e_{k-1}(T_b)
\end{align*}
as claimed. 
This completes the proof.
\end{proof}

\begin{example}
Consider the rooted binary phylogenetic tree $T$ on 5 leaves depicted in Figure \ref{Fig_IndependentSets}. For its two maximal pending subtrees $T_a$ and $T_b$, we have: $e_0(T_a) = e_0(T_b)=1$ (by definition), $e_1(T_a) = 4$, $e_1(T_b)=2$, $e_2(T_a) = 2$ and $e_i(T_a)=e_j(T_b)=0$ for $i > 2$ and $j > 1$ (see also Example \ref{Example_Independent_Extended}). Thus, using Lemma \ref{extended_number_recursion}, we have
\begin{align*}
e_3(T) &= \sum\limits_{i=0}^3 \left(e_i(T_a) \cdot e_{3-i}(T_b) \right)  + e_2(T_a) + e_2(T_b) \\
&= \left( e_0(T_a) \cdot e_3(T_b) + e_1(T_a) \cdot e_2(T_b) + e_2 (T_a) \cdot e_1(T_b) + e_3(T_a) \cdot e_0(T_b) \right) \\
&\quad + e_2(T_a) + e_2(T_b) \\
&= \left( 1 \cdot 0 + 4 \cdot 0 + 2 \cdot 2 + 0 \cdot 1 \right) + 2 + 0 = 6.
\end{align*}
And indeed, as $E_3(T) = \left\lbrace \{1,3,4\}, \, \{1,3,5\}, \, \{2,3,4\}, \, \{2,3,5\}, \, \{1,3,b\}, \, \{2,3,b\} \right\rbrace$, we have $e_3(T) = 6$.
\end{example}

We are now finally in the position to prove Theorem \ref{thm_counting}.
\begin{proof}[Proof of Theorem \ref{thm_counting}] \leavevmode \\
For Part (i), recall that by Corollary \ref{Dollo_count_lemma}, $k$ equals the number of maximal $\0$-nodes in $T$. Now, for $k=0$, there are no maximal $\0$-nodes. However, every node of $T$ except for $\rho'$ may be the $\cB$-node. As every rooted binary phylogenetic tree with $n$ leaves has $2n-1$ nodes, this yields $2n-1$ Dollo-0 characters containing at least one 1. Additionally, the constant character $f=0 \ldots 0$ (for which there is no $\cB$-node) is a Dollo-0 character. Thus, in total there are $2n$ Dollo-0 characters. \\

\noindent Part (ii) now follows from Lemmas \ref{Thm_sum_ik} -- \ref{extended_number_recursion}. More precisely, by Lemma \ref{Thm_sum_ik}, we have that 
$$  \cD_k(T) = \sum\limits_{u \in V(T) \setminus \{\rho'\}} i_k(T_u) =  \sum\limits_{\substack{u \in V(T) \setminus \{\rho'\} \\ n_u \geq 2}} i_k(T_u),$$
where the last equality follows from the fact that $i_k(T_u)=0$ if $T_u$ contains only one leaf.
Furthermore, by Lemma \ref{cor_number_independent_extended}, we have that 
$$ \sum\limits_{\substack{u \in V(T) \setminus \{\rho'\} \\ n_u \geq 2}} i_k(T_u) = \sum\limits_{\substack{u \in V(T) \setminus \{\rho'\} \\ n_u \geq 2}} \, \sum\limits_{i=0}^k e_i(T_u^1) \cdot e_{k-i}(T_u^2),$$
where $T_u=(T_u^1, T_u^2)$ denotes the standard decomposition of $T_u$. Using Definition \ref{def_extendedindependent} and Lemma \ref{extended_number_recursion}, we now have
 $$e_i(T_u^1) = \begin{cases}
    1, &\text{if $i=0$} \\
    0, &\parbox[t]{.5\textwidth}{if $i > 0$ and $T_u^1$ contains precisely one leaf.}  \\
    \sum\limits_{j=0}^i e_j(T_u^{11}) \cdot e_{i-j}(T_u^{12}) \\
    \quad + e_{i-1}(T_u^{11}) + e_{i-1}(T_u^{12}), &\parbox[t]{.5\textwidth}{if $i > 0$ and $T_u^1=(T_u^{11},T_u^{12})$ contains at least two leaves,}
    \end{cases} $$
    and 
    $$e_{k-i}(T_u^2) = \begin{cases}
    1 &\text{if $k-i=0$} \\
    0 &\parbox[t]{.45\textwidth}{if $k-i > 0$ and $T_u^2$ contains precisely one leaf.}  \\
    \sum\limits_{j=0}^{k-i} e_j(T_u^{21}) \cdot e_{k-i-j}(T_u^{22})  \\
    \quad + e_{k-i-1}(T_u^{21}) + e_{k-i-1}(T_u^{22}), &\parbox[t]{.45\textwidth}{if $k-i > 0$ and $T_u^2=(T_u^{21},T_u^{22})$ contains at least two leaves.}
    \end{cases} $$
This completes the proof.
\end{proof}

Theorem \ref{thm_counting} suggests that the number of Dollo-$k$ characters a rooted binary phylogenetic tree $T$ induces can be calculated recursively by decomposing $T$ and its subtrees. This is summarized in Algorithm \ref{Alg_Counting}. The subroutine \texttt{extended}$(T',k')$ takes as an input a rooted binary phylogenetic tree $T'$ and an integer $k'$. It returns the number of extended independent node sets of size $k'$ for $T'$ (employing Lemma \ref{extended_number_recursion} in the case that $T'$ contains at least two leaves). The function \texttt{main} then calculates the number of Dollo-$k$ characters for a given rooted binary phylogenetic tree $T$ and a given integer $k \geq 0$ according to Theorem \ref{thm_counting} (for $k=0$, Part (i) of Theorem \ref{thm_counting} is used, and for $k \geq 1$, Part (ii) of Theorem \ref{thm_counting} is used). In order to compute the quantities $i_k(T_u)$ for a subtree $T_u=(T_u^1,T_u^2)$ of $T$ that contains at least two leaves, the subroutine \texttt{extended} is applied to $T_u^1$ and $T_u^2$, respectively. \\

We thus have the following corollary.
\begin{corollary}
Let $T$ be a rooted binary phylogenetic tree and let $k \geq 1$. Then,  $\cD_k(T)$ can be calculated with Algorithm \ref{Alg_Counting}. 
\end{corollary}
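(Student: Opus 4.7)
The plan is to verify that Algorithm \ref{Alg_Counting} faithfully implements the recursive formula given in Theorem \ref{thm_counting}; once this is established, correctness follows immediately. Since Theorem \ref{thm_counting} has already been proved, the corollary is essentially a matter of certifying that the algorithm's recursion terminates and computes the right quantities.

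First, I would argue by induction on the number of leaves of the input tree $T'$ that the subroutine \texttt{extended}$(T',k')$ correctly returns $e_{k'}(T')$. For the base case, a single-leaf tree $T'$ has no non-root nodes and thus $e_{k'}(T')=1$ if $k'=0$ and $e_{k'}(T')=0$ otherwise; by inspection, this matches the boundary cases of the subroutine (recall $e_0(T')=1$ by convention, cf.\ Definition \ref{def_extendedindependent}). For the inductive step, let $T'=(T'_a,T'_b)$ be a rooted binary phylogenetic tree with at least two leaves. By the inductive hypothesis, \texttt{extended}$(T'_a,i)$ and \texttt{extended}$(T'_b,k'-i)$ correctly return $e_i(T'_a)$ and $e_{k'-i}(T'_b)$ for all relevant $i$, since both $T'_a$ and $T'_b$ have strictly fewer leaves than $T'$. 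Lemma \ref{extended_number_recursion} then guarantees that the value returned by the subroutine, which combines these recursive calls according to the formula $\sum_{i=0}^{k'} e_i(T'_a)\cdot e_{k'-i}(T'_b) + e_{k'-1}(T'_a) + e_{k'-1}(T'_b)$, equals $e_{k'}(T')$.

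With the correctness of \texttt{extended} established, the correctness of \texttt{main} follows directly. For $k=0$, Part (i) of Theorem \ref{thm_counting} gives $\cD_0(T)=2n$, which is returned immediately. For $k\geq 1$, Part (ii) of Theorem \ref{thm_counting} expresses $\cD_k(T)$ as a double sum over nodes $u\in V(T)\setminus\{\rho'\}$ with $n_u\geq 2$ and over $i=0,\ldots,k$, of products $e_i(T_u^1)\cdot e_{k-i}(T_u^2)$, where $T_u=(T_u^1,T_u^2)$ is the standard decomposition of $T_u$. The function \texttt{main} iterates over exactly these nodes $u$ and these indices $i$, invoking \texttt{extended} on the relevant subtrees, so it evaluates precisely this sum.

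There is no substantive obstacle here beyond a careful case analysis; the only subtle point is ensuring that the subroutine handles the single-leaf base case correctly (so that no spurious nonzero contributions arise from subtrees with too few leaves) and that the \texttt{main} function restricts to nodes $u$ with $n_u\geq 2$, matching the restriction justified in the proof of Theorem \ref{thm_counting} by $i_k(T_u)=0$ when $T_u$ has a single leaf. Termination is immediate since each recursive call to \texttt{extended} strictly decreases the number of leaves of the tree argument, and the outer iteration in \texttt{main} ranges over the finite node set of $T$.
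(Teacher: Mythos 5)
Your proposal is correct and matches the paper's intent: the paper offers no separate proof of this corollary, treating it as an immediate consequence of Theorem \ref{thm_counting} together with Lemmas \ref{Thm_sum_ik}--\ref{extended_number_recursion} and the descriptive paragraph preceding the algorithm, and your induction on the number of leaves for \texttt{extended} plus the direct check that \texttt{main} evaluates the double sum simply makes that implicit argument explicit.
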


\begin{algorithm}[H]\label{Alg_Counting}
\SetKwInput{KwInput}{Input}                
\SetKwInput{KwOutput}{Output}              
\DontPrintSemicolon
\KwInput{Rooted binary phylogenetic tree $T$ with $n$ leaves, integer $k \geq 0$}
\KwOutput{Number $\cD_k(T)$ of Dollo-$k$ characters for $T$}
\caption{Compute number of Dollo-$k$ characters}
\vspace*{2mm}
\textbf{Subroutine} \texttt{extended($T',k'$)}\;
$n' \coloneqq $ number of leaves of $T'$\;
\uIf{$k'=0$}{\KwRet 1\;} 
\uElseIf{$k' > 0$ and $n=1$}{\KwRet 0\;}
\Else{
  Decompose $T'$ into its two maximal pending subtrees $T'_a$ and $T'_b$\;
  $e_{k'}(T') \coloneqq 0$\;
  \For{$i=0, \ldots, k'$}{
       $e_{k'}(T') = e_{k'}(T') + \texttt{extended}(T'_a,i) \cdot \texttt{extended}(T'_b,k'-i)$\;
        }
  $e_{k'}(T') = e_{k'}(T') + \texttt{extended}(T'_a,k'-1) + \texttt{extended}(T'_b,k'-1)$\;
  \KwRet $e_{k'}(T')$\;
  }
\vspace*{2mm} 
\textbf{Function} \texttt{main}\;
\uIf{$k=0$}{\KwRet $\mathcal{D}_0(T)=2n$\;}
\Else{
$\cD_k(T) \coloneqq 0$\;
\ForAll{$u \in V(T) \setminus \{\rho'\}$ with $n_u \geq 2$}{
    Decompose $T_u$ into its two maximal pending subtrees $T_u^1$ and $T_u^2$\;
    $i_k(T_u) \coloneqq 0$\;
    \For{$i=0, \ldots, k$}{
        $i_k(T_u) = i_k(T_u) + \texttt{extended}(T_u^1,i) \cdot \texttt{extended}(T_u^2,k-i)$\;
    }
$\cD_k(T) = \cD_k(T) + i_k(T_u)$\;
}
\KwRet $\cD_k(T)$\;}
\end{algorithm}

\begin{remark}
Note that the pseudocode given in Algorithm \ref{Alg_Counting} leads to an exponential run time for calculating the number of Dollo-$k$ characters for a given rooted binary tree $T$ with $n$ leaves. However, by caching the values of $e_k(T)$ and $i_k(T)$, we can derive a $O(k^2 |V(T)|)$ algorithm (in the number of nodes of $T$), i.e. to a polynomial time algorithm (for details see the java code for a modified version of Algorithm \ref{Alg_Counting} in Section \ref{subsec_java} in the Appendix). Also note that when calculating the number of Dollo-$k$ characters for multiple $k$, the values of $e_k(T)$ and $i_k(T)$ can be reused, which further reduces computation times. This polynomial time algorithm has been implemented in the \texttt{DolloAnnotator} app in the Babel package for BEAST 2 \citep{Bouckaert2019}, which is publicly available. 
\end{remark}

Using the \texttt{DolloAnnotator} app the number of Dollo-$k$ characters can quickly be computed, even if the trees are large. As an example, Figure \ref{Fig_c128} in Section \ref{subsec_dollocounts} in the Appendix shows the number of Dollo-$k$ characters for $k=0, \ldots, 128$ for both the fully balanced tree of height 7 (i.e. on 128 leaves) and the caterpillar tree on 128 leaves. It is interesting to note that the `distribution' of the number of  Dollo-$k$ characters for the caterpillar tree is almost symmetric (with the highest numbers occuring for $k=62$ and $k=63$), while this is not the case for the fully balanced tree. In particular, the are no Dollo-$k$ characters with $k > 64 = n/2$ for the latter. This is simply due to the fact that there are no independent node sets of size $k > 2^{h-1}$ for the fully balanced tree $T_h^\mathit{fb}$ of height $h$, as we will show in the following.

\begin{proposition} \label{Prop_MaxK_Balanced}
Let $T_h^\mathit{fb}$ be a fully balanced tree of height $h$ with $h \geq 2$. Then the maximum cardinality of any independent node set for $T$ equals $2^{h-1}$ and such an independent node set of size $2^{h-1}$ always exists. In particular, $i_{2^{h-1}}(T_h^\mathit{fb}) > 0$ and $i_k(T_h^\mathit{fb})=0$  for all $k > 2^{h-1}$.
\end{proposition}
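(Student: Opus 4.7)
The plan is to introduce an auxiliary quantity and reduce the claim to a statement about extended independent node sets, where the recursion from Lemma \ref{extended_number_recursion} makes induction easy. Concretely, let $M_h := \max\{k \in \mathbb{N}_{\geq 0} : e_k(T_h^\mathit{fb}) > 0\}$ denote the largest size of an extended independent node set of $T_h^\mathit{fb}$. I will first prove by induction on $h \geq 1$ that $M_h = 2^{h-1}$, and then transfer this to independent node sets via Lemma \ref{cor_number_independent_extended}.

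For the base case $h = 1$, the tree $T_1^\mathit{fb}$ consists of $\rho$ with two leaf children that are siblings; since $\rho$ is forbidden and at most one of the sibling leaves may be chosen, $M_1 = 1 = 2^0$. For the inductive step, assume $M_{h-1} = 2^{h-2}$ with $h \geq 2$. Using the standard decomposition $T_h^\mathit{fb} = (T_{h-1}^\mathit{fb}, T_{h-1}^\mathit{fb})$, Lemma \ref{extended_number_recursion} yields
\begin{equation*}
e_k(T_h^\mathit{fb}) \;=\; \sum_{i=0}^{k} e_i(T_{h-1}^\mathit{fb}) \, e_{k-i}(T_{h-1}^\mathit{fb}) \;+\; 2\, e_{k-1}(T_{h-1}^\mathit{fb}).
\end{equation*}
Taking $k = 2^{h-1} = 2M_{h-1}$, the summand with $i = M_{h-1}$ contributes $e_{M_{h-1}}(T_{h-1}^\mathit{fb})^2 > 0$, showing $e_{2^{h-1}}(T_h^\mathit{fb}) > 0$. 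Conversely, for any $k > 2^{h-1}$, each summand $e_i(T_{h-1}^\mathit{fb})\, e_{k-i}(T_{h-1}^\mathit{fb})$ vanishes because $i + (k-i) = k > 2M_{h-1}$ forces $i > M_{h-1}$ or $k - i > M_{h-1}$, and analogously $e_{k-1}(T_{h-1}^\mathit{fb}) = 0$ since $k-1 \geq 2^{h-1} > M_{h-1}$. Hence $M_h = 2^{h-1}$.

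To conclude, apply Lemma \ref{cor_number_independent_extended} to $T_h^\mathit{fb}$ (valid for $h \geq 2$, so that $T_h^\mathit{fb}$ has at least two leaves):
\begin{equation*}
i_k(T_h^\mathit{fb}) \;=\; \sum_{i=0}^{k} e_i(T_{h-1}^\mathit{fb}) \, e_{k-i}(T_{h-1}^\mathit{fb}).
\end{equation*}
By the same argument as above, the summand with $i = k - i = M_{h-1} = 2^{h-2}$ is strictly positive when $k = 2^{h-1}$, while for $k > 2^{h-1}$ every summand vanishes. Therefore $i_{2^{h-1}}(T_h^\mathit{fb}) > 0$ and $i_k(T_h^\mathit{fb}) = 0$ for all $k > 2^{h-1}$, which is precisely the statement of the proposition.

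I do not foresee a substantial obstacle here: the result is essentially a bookkeeping consequence of the recursions already proved. The only subtlety is to verify that the upper bound $M_h = 2^{h-1}$ is attained, which is handled by the square term $e_{M_{h-1}}(T_{h-1}^\mathit{fb})^2$ in the recursion — a contribution corresponding biologically to choosing a maximum extended independent node set in each of the two isomorphic halves of $T_h^\mathit{fb}$.
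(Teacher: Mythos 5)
Your proof is correct and follows essentially the same route as the paper: an induction on $h$ establishing that the maximum size of an \emph{extended} independent node set of $T_h^\mathit{fb}$ is $2^{h-1}$ (the paper states this as a separate lemma, Lemma \ref{Lemma_MaxEK_Balanced}), followed by a transfer to independent node sets via Lemma \ref{cor_number_independent_extended}. The only cosmetic difference is that you certify attainment of the maximum algebraically through the square term $e_{M_{h-1}}(T_{h-1}^\mathit{fb})^2$ in the recursion, whereas the paper constructs the witnessing set explicitly as a union of two maximum sets from the halves — these are the same observation.
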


In order to prove Proposition \ref{Prop_MaxK_Balanced}, we require the following lemma, which is basically the corresponding statement for \emph{extended} independent node sets (but note the different minimum value for $h$!).

\begin{lemma}\label{Lemma_MaxEK_Balanced}
Let $T_h^\mathit{fb}$ be a fully balanced tree of height $h$ with $h \geq 1$. Then the maximum cardinality of any extended independent node set for $T$ equals $2^{h-1}$ and such an extended independent node set of size $2^{h-1}$ always exists. In particular, $e_{2^{h-1}}(T_h^\mathit{fb}) > 0$ and $e_k(T_h^\mathit{fb})=0$ for $k > 2^{h-1}$.
\end{lemma}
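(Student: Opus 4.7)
My plan is to prove Lemma \ref{Lemma_MaxEK_Balanced} by induction on the height $h$, exploiting the self-similar structure of fully balanced trees together with the recursion established in Lemma \ref{extended_number_recursion}.

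For the base case $h=1$, the tree $T_1^{\mathit{fb}}$ consists of the root $\rho$ and two leaves forming a cherry. Since the root cannot be included and the two leaves are siblings, any extended independent node set has size at most $1$, and $\{x\}$ for either leaf $x$ realizes this bound. Hence the maximum is $1=2^{0}=2^{h-1}$, as claimed.

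For the inductive step, assume $h\geq 2$ and that the statement holds for $T_{h-1}^{\mathit{fb}}$. Write $T_h^{\mathit{fb}}=(T_a,T_b)$ with $T_a\cong T_b\cong T_{h-1}^{\mathit{fb}}$. By the structural characterization used in the proof of Lemma \ref{extended_number_recursion}, every extended independent node set $\mathcal{E}$ of $T_h^{\mathit{fb}}$ is of one of the following three forms: (a) $\mathcal{E}=\mathcal{E}_a\cup\mathcal{E}_b$ with $\mathcal{E}_a\in E_i(T_a)$ and $\mathcal{E}_b\in E_{k-i}(T_b)$; (b) $\mathcal{E}=\{a\}\cup\mathcal{E}_b$ with $\mathcal{E}_b\in E_{k-1}(T_b)$; or (c) the symmetric case with $b$ and $T_a$. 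By the inductive hypothesis, any extended independent node set in $T_a$ or $T_b$ has size at most $2^{h-2}$. In case (a) this gives $|\mathcal{E}|\leq 2\cdot 2^{h-2}=2^{h-1}$, while in cases (b) and (c) we obtain $|\mathcal{E}|\leq 1+2^{h-2}\leq 2^{h-1}$ (since $h\geq 2$). Consequently no extended independent node set exceeds size $2^{h-1}$, which shows $e_k(T_h^{\mathit{fb}})=0$ for every $k>2^{h-1}$.

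To finish, I need to exhibit an extended independent node set of size exactly $2^{h-1}$. By the inductive hypothesis there exist $\mathcal{E}_a\in E_{2^{h-2}}(T_a)$ and $\mathcal{E}_b\in E_{2^{h-2}}(T_b)$; the union $\mathcal{E}_a\cup\mathcal{E}_b$ is disjoint (the two sets live in vertex-disjoint subtrees), avoids $\rho$ (since it is contained in $V(T_a)\cup V(T_b)$), and inherits the non-ancestor and non-sibling conditions from $\mathcal{E}_a$ and $\mathcal{E}_b$ (no vertex of $T_a$ is an ancestor or sibling of a vertex of $T_b$ in $T_h^{\mathit{fb}}$, as $a$ and $b$ themselves are excluded). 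Hence $\mathcal{E}_a\cup\mathcal{E}_b\in E_{2^{h-1}}(T_h^{\mathit{fb}})$, so in particular $e_{2^{h-1}}(T_h^{\mathit{fb}})>0$, completing the induction. The only subtle point to keep in mind is the boundary of the induction: the statement requires $h\geq 1$ because $T_0^{\mathit{fb}}$ is a single leaf that coincides with the root and therefore admits no nonempty extended independent node set, but the inductive step only appeals to the hypothesis for $h-1\geq 1$, so this causes no difficulty.
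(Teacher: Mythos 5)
Your proof is correct and follows essentially the same route as the paper: induction on $h$ using the decomposition $T_h^{\mathit{fb}}=(T_{h-1}^{\mathit{fb}},T_{h-1}^{\mathit{fb}})$, with the existence part given by the union of two maximum sets from the subtrees. The only (cosmetic) difference is in the upper bound: you argue directly on set sizes via the trichotomy underlying Lemma \ref{extended_number_recursion}, whereas the paper substitutes into the numerical recursion of that lemma and observes that every term vanishes; both rest on the same structural fact.
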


\begin{proof}
We prove this statement by induction on $h$. For $h=1$, $T_1^\mathit{fb}$ consists of a single cherry, say $[1,2]$, and the maximum size of an extended independent node set for $T_1^\mathit{fb}$ is $1=2^0 = 2^{h-1}$ (there are two extended independent node sets of size 1 for $T_1^\mathit{fb}$, namely $\{1\}$ and $\{2\}$, but there is no extended independent node set of size equal or greater than 2). In particular, $e_1(T_1^\mathit{fb})=2 > 0$ and $e_k(T)=0$ for $k > 1 = 2^{h-1}$. 
Now, assume that the statement holds for all $h' < h$ and consider a fully balanced tree $T_h^\mathit{fb} = (T_{h-1}^\mathit{fb}, T_{h-1}^\mathit{fb})=(T_a,T_b)$ of height $h$. By the inductive hypothesis, the size of a largest extended independent node set for both maximal pending subtrees of $T_h^\mathit{fb}$ equals $2^{h-2}$ (and there are extended independent node sets of this size). By taking the union of such an extended independent node set of size $2^{h-2}$ for $T_a$ and one for $T_b$, we obtain an extended independent node set of size $2 \cdot 2^{h-2} = 2^{h-1}$ for $T_h^\mathit{fb}$. In particular, $e_{2^{h-1}}(T_h^\mathit{fb}) > 0$. Moreover, using Lemma \ref{extended_number_recursion}, it easily follows that $e_k(T_h^\mathit{fb}) = 0$ for $k > 2^{h-1}$. As an example, for $k=2^{h-1}+1$, by Lemma \ref{extended_number_recursion} we have
\begin{align*}
    e_{2^{h-1}+1}(T_h^\mathit{fb}) &= \sum\limits_{i=0}^{2^{h-1}+1} e_i(T_a) \cdot e_{2^{h-1}+1-i}(T_b) + e_{2^{h-1}}(T_a) + e_{2^{h-1}}(T_b)\\
    &= e_0(T_a) \cdot \underbrace{e_{2^{h-1}+1}(T_b)}_{= 0 \text{ by induction}} + \ldots + e_{2^{h-2}}(T_a) \cdot \underbrace{e_{2^{h-2}+1}(T_b)}_{= 0 \text{ by induction}} \\ 
    &\qquad + \underbrace{e_{2^{h-2}+1}(T_a)}_{= 0 \text{ by induction}} \cdot e_{2^{h-2}}(T_b) + \ldots + \underbrace{e_{2^{h-1}+1}(T_a)}_{= 0 \text{ by induction}} \cdot e_0(T_a) \\
    &\qquad + \underbrace{e_{2^{h-1}}(T_a)}_{= 0 \text{ by induction}} + \underbrace{e_{2^{h-1}}(T_b)}_{= 0 \text{ by induction}} \\
    &= 0.
\end{align*}
Analogously, it follows that $e_k(T_h^\mathit{fb}) = 0$ for $k > 2^{h-1}+1$. This completes the proof.
\end{proof}

Using Lemma \ref{cor_number_independent_extended} and Lemma \ref{Lemma_MaxEK_Balanced}, we can now prove Proposition \ref{Prop_MaxK_Balanced}.

\begin{proof}[Proof of Proposition \ref{Prop_MaxK_Balanced}]
Let $T_h^\mathit{fb} = (T_{h-1}^\mathit{fb}, T_{h-1}^\mathit{fb})=(T_a,T_b)$ be a fully balanced tree of height $h$ with $h \geq 2$. Then, by Lemma \ref{Lemma_MaxEK_Balanced}, there exists an extended independent node set of size $2^{h-2}$ for $T_a$, say $\mathcal{E}_{2^{h-2}}^a$, and there exists an extended independent node set of size $2^{h-2}$ for $T_b$, say $\mathcal{E}_{2^{h-2}}^b$. It is now easily verified that $\mathcal{I} \coloneqq \mathcal{E}_{2^{h-2}}^a \cup \mathcal{E}_{2^{h-2}}^b$ yields an independent node set of size $2^{h-1}$ for $T_h^\mathit{fb}$. In particular, $i_{2^{h-1}}(T_h^\mathit{fb}) > 0$. Moreover, using Lemma \ref{cor_number_independent_extended} it follows that $i_k(T_h^\mathit{fb})=0$ for $k > 2^{h-1}$. Exemplarily, for $k=2^{h-1}+1$, we have by Lemma \ref{cor_number_independent_extended}
\begin{align*}
     i_{2^{h-1}+1}(T_h^\mathit{fb}) &= \sum\limits_{i=0}^{2^{h-1}+1} e_i(T_a) \cdot e_{2^{h-1}+1-i}(T_b) \\
    &= e_0(T_a) \cdot \underbrace{e_{2^{h-1}+1}(T_b)}_{= 0 \text{ by Lemma \ref{Lemma_MaxEK_Balanced}}} + \ldots + e_{2^{h-2}}(T_a) \cdot \underbrace{e_{2^{h-2}+1}(T_b)}_{= 0 \text{ by Lemma \ref{Lemma_MaxEK_Balanced}}} \\ 
    &\qquad + \underbrace{e_{2^{h-2}+1}(T_a)}_{= 0 \text{ by Lemma \ref{Lemma_MaxEK_Balanced}}} \cdot e_{2^{h-2}}(T_b) + \ldots + \underbrace{e_{2^{h-1}+1}(T_a)}_{= 0 \text{ by Lemma \ref{Lemma_MaxEK_Balanced}}} \cdot e_0(T_a) \\
    &= 0.
\end{align*}
Analogously, it follows that $i_k(T_h^\mathit{fb}) = 0$ for $k > 2^{h-1}+1$. This completes the proof.
\end{proof}

\subsection{The extremal case of Dollo-$(n-2)$ characters}
While we have seen in Section \ref{section_sackin} that there is no direct correspondence between the balance of a tree and its number of Dollo-$k$ characters, the final aim of this manuscript is to show that there is, however, a relationship between the shape of a tree $T$ and the question whether $T$ induces Dollo-$(n-2)$ characters. 

\begin{proposition} \label{Existence_Dollon-2}
Let $T = (T_a, T_b)$ be a rooted binary phylogenetic tree with $n \geq 3$ leaves and root $\rho$. Let $n_a$ and $n_b$ denote the numbers of leaves of $T_a$ and $T_b$, respectively, where $n_a \geq n_b$. Then, $\cD_k(T)>0$ (i.e. $T$ induces Dollo-$(n-2)$ characters) if and only if $T$ is a semi-caterpillar tree. 
\end{proposition}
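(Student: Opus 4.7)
My plan is to treat the two directions of the equivalence separately and, in both directions, to work through Corollary \ref{Dollo_count_lemma}, which says that the Dollo score equals the number of maximal $\0$-nodes.

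For the \emph{if}-direction, I would give an explicit construction of a character realizing $k(f,T) = n-2$. Given a semi-caterpillar $T=(T_a,T_b)$, let $x_a$ be a leaf of the cherry of $T_a$ (or the unique leaf if $n_a=1$) and define $x_b$ analogously; set $f(x_a)=f(x_b)=1$ and $f(z)=0$ for all other leaves. Then the MRCA of $x_a$ and $x_b$ is $\rho$, so the $\cB$-node equals $\rho$, and I need to verify that each of the $n-2$ state-$0$ leaves is a maximal $\0$-node. This is where the caterpillar shape of $T_a$ (and $T_b$) is crucial: walking down the unique internal path from $a$ to $x_a$, each off-path child is a single leaf, so its parent lies on the path from $\rho$ to a state-$1$ leaf and therefore has a state-$1$ descendant. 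Thus each state-$0$ leaf has a non-$\0$-node parent and is itself a maximal $\0$-node. By Corollary \ref{Dollo_count_lemma} this gives $k(f,T)=n-2$.

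For the \emph{only if}-direction, I would suppose $f$ is a character with $k(f,T)=n-2$ and extract structural information forcing $T$ to be a semi-caterpillar. By Corollary \ref{Dollo_count_lemma}, there are $n-2$ maximal $\0$-nodes in $T$, whose $\0$-clades are pairwise vertex-disjoint (by maximality), so they contain at least $n-2$ distinct state-$0$ leaves. Since $f$ must assign state $1$ to at least two leaves (otherwise $k(f,T)=0$, which by Proposition \ref{dollo_0_n-2} is strictly less than $n-2$ for $n \geq 3$), exactly two leaves receive state $1$ and exactly $n-2$ receive state $0$. Counting forces each maximal $\0$-node to consist of a single leaf, and moreover every state-$0$ leaf must descend from the $\cB$-node; since the $\cB$-node has all $n$ leaves as descendants, it must coincide with $\rho$. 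Consequently the two state-$1$ leaves $x_a \in T_a$ and $x_b \in T_b$ lie in different maximal pending subtrees.

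The remaining step, which is the heart of the argument, is to show that $T_a$ is a caterpillar (and analogously $T_b$). I would argue along the path from $a$ to $x_a$: at every internal node $v$ of this path, the off-path child $s$ must be a leaf, for otherwise $T_s$ would be a non-trivial $\0$-clade, making every leaf in $T_s$ a non-maximal $\0$-node and contradicting the fact that all $n-2$ state-$0$ leaves are maximal $\0$-nodes. This forces exactly the caterpillar shape. The edge cases $n_a=1$ or $n_b=1$ are handled by the convention that a single leaf is a caterpillar, so in all cases $T=(T_a,T_b)$ is a semi-caterpillar, completing the equivalence. The main (but entirely manageable) obstacle is formalising the ``off-path child must be a single leaf'' step cleanly enough to cover the base of the induction (the cherry at the end of the path) together with the internal nodes along the way.
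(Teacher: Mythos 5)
Your proof is correct, and both directions rest on the same foundation as the paper's, namely Corollary \ref{Dollo_count_lemma}, which identifies the Dollo score with the number of maximal $\0$-nodes (the paper phrases this via independent node sets of size $n-2$ through Lemma \ref{Thm_sum_ik}, which is the same object in different clothing). Your ``if'' direction is essentially the paper's construction: the character with exactly two 1's, one on a cherry leaf of each maximal pending subtree, is precisely the complement of the paper's independent node set $X \setminus \{x,v\}$ (respectively $X \setminus \{x,b\}$ when $n_b=1$). Where you genuinely diverge is in the two sub-steps of the ``only if'' direction. To show that all $n-2$ loss nodes are leaves, the paper deletes the clade below a hypothetical internal maximal $\0$-node and invokes Proposition \ref{dollo_0_n-2} on the resulting smaller tree; you instead count --- $n-2$ pairwise disjoint $\0$-clades plus at least two state-1 leaves already exhausts all $n$ leaves --- which is more elementary and also delivers $\cB$-node $=\rho$ for free (the paper gets that separately). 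To then pin down the caterpillar shape, the paper counts cherries globally ($T$ has at most two, and a short case analysis shows they cannot both sit in $T_a$), whereas you walk the path from the root of each subtree to its unique state-1 leaf and force every off-path child to be a leaf. Both work; your path argument is more local and constructive, while the paper's cherry count is shorter once the ``leaves only'' step is in place. The one step you flag as needing care --- formalizing that an off-path child must be a single leaf --- is indeed routine given your counting step: an internal off-path child would be a maximal $\0$-node whose clade contains at least two state-0 leaves, contradicting that every maximal $\0$-clade is a singleton. (A minor citation point: the fact that a character with at most one state-1 leaf has Dollo score $0$ is Theorem \ref{prop_dollo_score_1tree}(i) rather than Proposition \ref{dollo_0_n-2}, but the claim itself is correct.)
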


\begin{proof}
Let $T$ be a rooted binary phylogenetic tree with root $\rho$ and $n \geq 3$ leaves. In order to analyze the existence of Dollo-$(n-2)$ characters, by Theorem \ref{Thm_sum_ik} we need to analyze the existence of independent node sets of size $n-2$ for $T$.

First, suppose that $T=(T_a,T_b)$ is s semi-caterpillar tree, i.e. both $T_a$ and $T_b$ are rooted caterpillar trees. Let $n_a$ and $n_b$ denote the number of leaves of $T_a$ and $T_b$, respectively, and assume that $n_a \geq n_b$ (and $n_a+n_b=n\geq3$). We now distinguish two cases:

\begin{enumerate}[\rm (i)]
    \item If $n_b=1$, $T_a$ is a rooted caterpillar tree with at least two leaves (as $n_a+n_b \geq 3$) and thus it contains precisely one cherry, say $[x,y]$. Let $b$ denote the single leaf of $T_b$. Then the set $\mathcal{I} \coloneqq X \setminus \{x,b\}$ (and analogously the set $\mathcal{I}' \coloneqq X \setminus \{y,b\}$) is an independent node set of size $n-2$ of $T$ because:
    \begin{itemize}
        \item $|\mathcal{I}|=n-2$.
        \item $\rho, a,b \notin \mathcal{I}$ (i.e. neither the root of $T$ nor its children are contained in $\mathcal{I}$).
        \item $u_s$ is not an ancestor of $u_t$ for all $s \neq t$ (since $\mathcal{I}$ contains only leaves of $T$),
	    \item $u_s$ and $u_t$ are not siblings for all $s \neq t$ (since $\mathcal{I}$ contains only leaves of $T$ and the only leaves that are siblings in $T$ are $x$ and $y$, but $x \notin \mathcal{I}$ (all other leaves in $X$ have internal nodes as siblings)). 
    \end{itemize}
    \item If $n_b \geq 2$, both $T_a$ and $T_b$ are rooted caterpillar trees on at least two leaves and thus they both contain precisely one cherry. Let $[x,y]$ denote the cherry of $T_a$ and let $[v,w]$ denote the cherry of $T_b$. Then the set $\mathcal{I} \coloneqq X \setminus \{x,v\}$ (and analogously the sets $\mathcal{I}' \coloneqq X \setminus \{x,w\}$, $\mathcal{I}'' \coloneqq X \setminus \{y,v\}$ and $\mathcal{I}''' \coloneqq X \setminus \{y,w\}$) is an independent node set of size $n-2$ of $T$, since:
\begin{itemize}
	\item $\vert \mathcal{I} \vert = n-2$.
	\item $\rho, a, b \notin \mathcal{I}$ (i.e. neither the root of $T$ nor its children are contained in $\mathcal{I}$).
	\item $u_s$ is not an ancestor of $u_t$ for all $s \neq t$ (since $\mathcal{I}$ contains only leaves of $T$),
	\item $u_s$ and $u_t$ are not siblings for all $s \neq t$ (since $\mathcal{I}$ contains only leaves of $T$ and the only leaves that are siblings in $T$ are $x$ and $y$, and $v$ and $w$ but $x,v \notin \mathcal{I}$ (all other leaves in $X$ have internal nodes as siblings)). 
	\end{itemize}
    \end{enumerate}
Thus, in both cases there exists an independent node set of size $n-2$ for $T$, and thus by Theorem \ref{Thm_sum_ik}, $\cD_{n-2}(T)>0$. This completes the first part of the proof.

We now show that any rooted binary phylogenetic tree $T=(T_a,T_b)$ with root $\rho$ and $\cD_{n-2}(T) > 0$ has the property that both $T_a$ and $T_b$ are rooted caterpillar trees with $n_a$ and $n_b$ leaves, respectively (where without loss of generality $n_a \geq n_b$).

Therefore, let $\mathcal{I}_{n-2} = \{u_1, \ldots, u_{n-2}\} \in I_{n-2}(T_u)$ be an independent node set of size $n-2$ of some subtree $T_u$ of $T$. 

We first claim that $T_u \equiv T_{\rho} \equiv T$, i.e. no other subtree of $T$ has an independent node set of size $n-2$. This is due to the fact that by Proposition \ref{dollo_0_n-2} we have for any rooted binary phylogenetic tree with $n_u$ leaves and any character $f$ that $d(f,T_u) \leq n_u-2$. As $n_u < n$ for every $T_u \neq T_\rho$ this implies that $T_u \neq T_\rho$ does not have Dollo-$(n-2)$ characters and thus in particular no independent node set of size $n-2$. In particular, $\rho$ is the $\cB$-node.

We now claim that $\mathcal{I}_{n-2}$ can only contain elements of the leaf set $X$ (and no internal node of $T$). In particular, $\mathcal{I}_{n-2} \subset X$. To see this, recall that we can identify the elements of $\mathcal{I}_{n-2}$ with the maximal $\0$-nodes of $T$ (as in the proof of Lemma \ref{Thm_sum_ik}). For the sake of a contradiction, assume that $u_i \in \mathcal{I}_{n-2}$ is an internal node of $T$. Then no node in $V(T_{u_i}) \setminus \{u_i\}$ can be an element of $\mathcal{I}_{n-2}$ (as $u_i$ is an ancestor of all of them). Consider the tree $T'$ obtained from $T$ by deleting all elements in $V(T_{u_i}) \setminus \{u_i\}$. Then, $T'$ is a rooted binary phylogenetic tree with $n'=n-n_{u_i}+1 \leq n-2+1=n-1$ leaves. In particular, by Proposition \ref{dollo_0_n-2} and Corollary \ref{Dollo_count_lemma}, $T'$ has at most $n'-2 \leq n-3$ maximal $\0$-nodes (and $u_i$ is one of them). However, as no node in $V(T_{u_i}) \setminus \{u_i\}$ can be a maximal $\0$-node (as $u_i$ is an ancestor of all of them), this implies that $T$ also has at most $n-3$ maximal $\0$-nodes contradicting the fact that $\mathcal{I}_{n-2}$ is an independent node set of size $n-2$. Thus, all elements contained in $\mathcal{I}_{n-2}$ must be leaves of $T$.

We now claim that both $T_a$ and $T_b$ contain at most one cherry. To see this, first note that as any two elements $u_s$ and $u_t$ of $\mathcal{I}_{n-2}$ cannot be siblings, $T$ can have at most two cherries (as $\mathcal{I}_{n-2}$ contains $n-2$ elements, all of which are leaves, and no pair of these leaves can form a cherry). Now, if $T$ has precisely one cherry (as $n \geq 3$ there must be a cherry), it is clear that both $T_a$ and $T_b$ can have at most one cherry (in fact, as we assume that $n_a \geq n_b$, the cherry is in $T_a$ in this case and $T_b$ consists of a single leaf). In particular, both $T_a$ and $T_b$ are caterpillar trees and therefore, $T$ is a semi-caterpillar. So in this case, there remains nothing to show.
Thus, we now consider the case that $T$ contains precisely two cherries and, for the sake of a contradiction, assume that both of them are in $T_a$. This implies that $T_b$ contains no cherry at all. In particular, $T_b$ consists of a single leaf $b$. As $\rho$ is the $\cB$-node, leaf $b$ must be in state 1, i.e. it cannot be a maximal $\0$-node. This implies that all of the $n-2$ maximal $\0$-nodes are in $T_a$, which is a contradiction (since $n_a=n-1$ and thus by Proposition \ref{dollo_0_n-2} and Corollary \ref{Dollo_count_lemma}, $T_a$ can contain at most $n_a-2=n-3$ maximal $\0$-nodes). This implies that $T_a$ contains at most one cherry. Analogously, we can conclude that $T_b$ contains at most one cherry.  
Thus, we can conclude that both $T_a$ and $T_b$ contain at most one cherry. Therefore, $T_a$ and $T_b$ are both rooted caterpillar trees, and thus $T$ is a semi-caterpillar. This completes the proof.
\end{proof}

A direct consequence of this proposition and its proof is the following corollary:
\begin{corollary}  \label{Number_Dollon-2}
Let $T=(T_a, T_b)$ be a rooted binary phylogenetic tree with $n \geq 3$ leaves. Then, the number of Dollo-$(n-2)$ characters of $T$ is either $0$, $2$ or $4$, i.e. $\mathcal{D}_{n-2}(T) \in \{0,2,4\}$.
\end{corollary}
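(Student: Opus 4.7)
The plan is to combine Proposition~\ref{Existence_Dollon-2} with a direct enumeration of the independent node sets of size $n-2$ in a semi-caterpillar. By Proposition~\ref{Existence_Dollon-2}, if $T$ is not a semi-caterpillar then $\cD_{n-2}(T) = 0$, which handles one of the three values. The remaining task is to show that if $T$ is a semi-caterpillar then $\cD_{n-2}(T) \in \{2,4\}$, depending on whether $T$ is a full caterpillar or genuinely has two cherries.

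First, I would use Lemma~\ref{Thm_sum_ik} to write $\cD_{n-2}(T) = \sum_{u \in V(T) \setminus \{\rho'\}} i_{n-2}(T_u)$, and then invoke the argument already carried out in the proof of Proposition~\ref{Existence_Dollon-2} to show that only the summand at $u=\rho$ can be nonzero (since every proper subtree $T_u$ has $n_u < n$ leaves and so, by Proposition~\ref{dollo_0_n-2} combined with Corollary~\ref{Dollo_count_lemma}, admits no independent node set of size $n-2$), and moreover that every $\mathcal{I} \in I_{n-2}(T)$ consists exclusively of leaves of $T$ (for if $u_i \in \mathcal{I}$ were internal, then collapsing $T_{u_i}$ to the single leaf $u_i$ would yield a tree on $n - n_{u_i} + 1 \leq n-1$ leaves for which $\mathcal{I}$ is still a valid set of maximal $\0$-nodes, forcing $|\mathcal{I}| \leq n-3$, a contradiction). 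Hence $\cD_{n-2}(T) = i_{n-2}(T)$, and each such independent node set is determined purely by the choice of the two leaves of $T$ that are \emph{excluded}.

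Next I would translate the conditions of Definition~\ref{Def_independentnodeset} into two concrete exclusion rules on the chosen leaf pair: (a) any leaf that is itself a child of $\rho$ must be excluded, and (b) the two leaves forming a cherry of $T$ cannot both be retained. A case distinction on $n_b$ then finishes the argument. If $n_b = 1$, then $T$ is a caterpillar with a unique cherry $[x,y]$ in $T_a$ and a leaf-child $b$ of $\rho$; rule (a) forces $b$ out, and rule (b) forces out exactly one of $x,y$, giving $2$ sets. If $n_b \geq 2$, then neither child of $\rho$ is a leaf, and $T_a$ and $T_b$ are caterpillars with cherries $[x,y]$ and $[v,w]$ respectively; rule (b) forces the exclusion of one leaf from each cherry, and these two choices are independent, giving $2 \cdot 2 = 4$ sets.

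The only delicate step is the ``leaves-only'' reduction, which rests on the sharp bound of Proposition~\ref{dollo_0_n-2} together with the non-ancestor condition of Definition~\ref{Def_independentnodeset}; but this reduction was already carried out inside the proof of Proposition~\ref{Existence_Dollon-2}, so it can simply be cited. Everything after that is a routine finite enumeration, yielding $\cD_{n-2}(T) \in \{0, 2, 4\}$ as claimed.
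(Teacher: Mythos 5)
Your proposal is correct and follows essentially the same route as the paper: it invokes Proposition \ref{Existence_Dollon-2} for the zero case, reuses the ``leaves-only'' and ``no retained cherry pair'' reductions from that proposition's proof, and then enumerates the two excluded leaves to obtain exactly $2$ sets when $n_b=1$ and $4$ sets when $n_a,n_b\geq 2$. The paper's proof is the same case analysis, likewise relying on the first part of the proof of Proposition \ref{Existence_Dollon-2} to confirm that the listed candidates really are independent node sets of size $n-2$.
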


\begin{proof}
Let $T=(T_a, T_b)$ be a rooted binary phylogenetic tree with $n \geq 3$ leaves. Let $n_a$ and $n_b$ denote the number of leaves of $T_a$ and $T_b$, respectively, where $n_a \geq n_b$.
	\begin{itemize}
	\item If $T=(T_a, T_b)$ does not have the property that both $T_a$ and $T_b$ are rooted caterpillar trees, $\mathcal{D}_{n-2}(T) = 0$ by Proposition \ref{Existence_Dollon-2}.
	\item If $T=(T_a, T_b)$ is such that both $T_a$ and $T_b$ are rooted caterpillar trees and $n_b=1$, $\mathcal{D}_{n-2}(T) = 2$. To see this, let $[x,y]$ denote the cherry of $T_a$ and let $b$ denote the single leaf of $T_b$. As in the proof of Proposition \ref{Existence_Dollon-2} we can conclude that any independent node set of size $n-2$ can only contain elements of the leaf set $X$ and no pair of elements in this set can form a cherry. Moreover, node $b$ cannot be in any independent node set of size $n-2$ of $T$. Thus, the only independent node sets of size $n-2$ of $T$ are $\mathcal{I}^1 = X \setminus\{x,b\}$ and $\mathcal{I}^2 = X \setminus\{y,b\}$ (they indeed \emph{are} independent node sets of size $n-2$ as shown in the first part of the proof of Proposition \ref{Existence_Dollon-2}). Hence $\mathcal{D}_{n-2}(T) = 2$.
	\item If $T=(T_a, T_b)$ is such that both $T_a$ and $T_b$ are rooted caterpillar trees with $n_a, n_b \geq 2$, we have $\mathcal{D}_{n-2}(T) = 4$. To see this, let $[x,y]$ denote the cherry of $T_a$ and let $[u,v]$ denote the cherry of $T_b$. Again, any independent node set of size $n-2$ can only contain elements of the leaf set $X$ and no pair of elements in the independent node set can form a cherry. Thus, there are exactly 4 possible independent node sets of size $n-2$: $\mathcal{I}^1 = X \setminus \{x,u\}$, $\mathcal{I}^2 = X \setminus \{x,v\}$, $\mathcal{I}^3 = X \setminus \{y,u\}$ and $\mathcal{I}^4 =  X \setminus \{y,v\}$ (they indeed \emph{are} independent node sets of size $n-2$ as shown in the first part of the proof of Proposition \ref{Existence_Dollon-2}). 
	\end{itemize}
	This completes the proof.
\end{proof}

Thus, any tree on $n \geq 3$ leaves induces either $0$, $2$ (Figure \ref{Fig_Caterpillar1}) or $4$ (Figure \ref{Fig_Caterpillar2}) Dollo-$(n-2)$ characters and this depends on whether $T$ is a caterpillar tree, a semi-caterpillar tree, or none of them.

\begin{figure}[h!]
	\centering
	\includegraphics[scale=0.3]{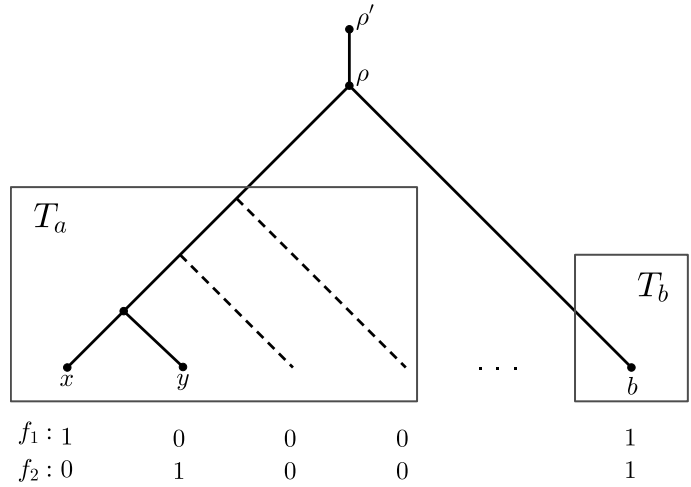}
	\caption{Rooted binary phylogenetic tree $T=(T_a, T_b)$ such that both $T_a$ and $T_b$ are rooted caterpillar trees, where $n_b=1$ and $n_a \geq 2$. The two characters depicted at the leaves are the only Dollo-$(n-2)$ characters of $T$. }
	\label{Fig_Caterpillar1}
\end{figure}

\begin{figure}[h!]
	\centering
	\includegraphics[scale=0.125]{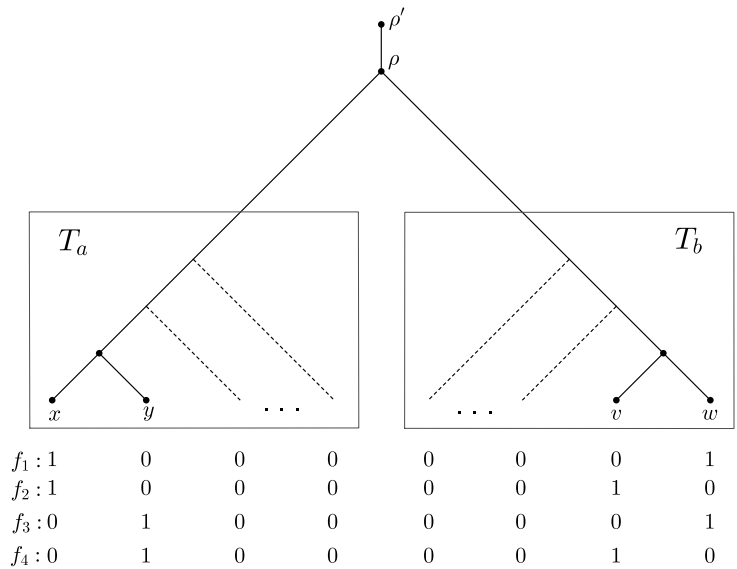}
	\caption{Rooted binary phylogenetic tree $T=(T_a, T_b)$ such that both $T_a$ and $T_b$ are rooted caterpillar trees, where $n_a, n_b \geq 2$. The four characters depicted at the leaves are the only Dollo-$(n-2)$ characters of $T$.}
	\label{Fig_Caterpillar2}
\end{figure}

\section{Discussion}
So-called persistent characters have recently attracted great attention in the literature, both from an algorithmic point of view  \citep{Bonizzoni2012,Bonizzoni2014,Bonizzoni2016,Bonizzoni2017} as well as from a combinatorial perspective \citep{Wicke2018}.

Here, we have taken a combinatorial perspective on the more general notion of Dollo-$k$ characters that generalize persistent characters (as persistent characters are simply the union of Dollo-0 and Dollo-1 characters) and have thoroughly analyzed their properties. 

First of all, we have introduced an Algorithm (Algorithm \ref{Alg_Dollo_1tree}) that can be used to calculate the Dollo-score as well as a Dollo-$k$ labeling for a binary character $f$ on a rooted binary phylogenetic tree $T$ in linear time by considering the 1-tree, i.e. the minimum subtree of $T$ that connects all leaves assigned state 1 by $f$. 

While this Algorithm and the 1-tree can also be used to characterize persistent characters, thereby complementing a  characterization of persistent characters based on the Fitch algorithm obtained in \citep{Wicke2018}, we have then highlighted that there are striking differences between persistent characters and general Dollo-$k$ characters. 
First, we have shown that while there is a close relationship between Fitch parsimony and persistent characters, this is not the case for general Dollo-$k$ characters. More precisely, the absolute difference between the parsimony score and the Dollo score of a character $f$ can be made arbitrarily large. 
Second, while there is a direct connection between the number of persistent characters and the Sackin index of a rooted binary phylogenetic tree $T$, we showed that this correspondence does not generalize to Dollo-$k$ characters.

In fact, counting the number of Dollo-$k$ characters turned out to be much more involved than counting the number of persistent characters and we have devoted the last part of this manuscript to establishing a recursive approach (cf. Theorem \ref{thm_counting} for this task, resulting in a polynomial-time algorithm. Both this algorithm as well as the algorithm for computing the Dollo-score and Dollo-labeling for a binary character $f$ have been added to the Babel package of BEAST 2 \citep{Bouckaert2019} (in form of the \texttt{DolloAnnotator} app) and are publicly available. Nevertheless, it would definitely be of interest to further study the number of Dollo-$k$ characters for a given tree $T$ and analyze whether it is possible to find an explicit formula for the quantities $\cD_k(T)$, e.g. based on the shape of $T$. We leave this as an open problem for future research.

\section*{Authors' contributions}
MF and KW contributed most (but not all) of the mathematical contents, while RB contributed the \texttt{DolloAnnotator} implementation.

\section*{Acknowledgments}
Mareike Fischer thanks the joint research project \textit{\textbf{DIG-IT!}} supported by the European Social Fund (ESF), reference: ESF/14-BM-A55-0017/19, and the Ministry of Education, Science and Culture of Mecklenburg-Vorpommern, Germany.
Moreover, Kristina Wicke thanks the German Academic Scholarship Foundation for a doctoral scholarship, under which parts of this work were conducted. All authors thank an anonymous reviewer for valuable suggestions on an earlier version of this manuscript.

\bibliographystyle{abbrvnat}
\bibliography{references_dollo}

\section{Appendix} \label{sec_Appendix}

\subsection{Java code for Algorithm \ref{Alg_Counting}} \label{subsec_java}
Below we present an efficient Java implementation of Algorithm \ref{Alg_Counting}. In contrast, to the pseudocode given in the main part of this paper, it caches the values for $i_k(T)$ and $e_k(T)$. Note that the \verb|ikt| and \verb|ekt| arrays are of dimension $|V(T)| \cdot (k+1)$. Since calculating each entry takes $k$ calculations, the complexity of the algorithm is in $O(k^2 |V(T)|)$. This java implementation (with an additional BigDecimal version) is used in the \texttt{DolloAnnotator} app in the Babel package for BEAST 2 \citep{Bouckaert2019} that is publicly available.

\begin{lstlisting}[language=Java,frame=single,caption=Efficient Java implementation of Algorithm \ref{Alg_Counting}., label=alg2java]
/* 
* Count the number of possible Dollo-k characters (that is, Dollo-assignments requiring k death events) on a given tree.
* 
* Implements Algorithm 2 of the present manuscript with ik and ek cached.
*
* @param tree
* @param k
* @param ik: independent node set of size k for subtree under node. Must be initialised as -1 at first call.
* @param ek: extended independent node set of size k for subtree under node. Must be initialised as -1 at first call.
* @return number of possible Dollo-k characters
* throws MathException when encountering underflow
*/
public long dolloKCount(Tree tree, int k, long [][] ik, long [][] ek) throws MathException {
		if (k == 0) {
			return tree.getNodeCount() + 1;
		}
		long count = 0;
		for (int j = tree.getLeafNodeCount(); j < tree.getNodeCount(); j++) {
			if (verboseInput.get()) System.err.print(j % 10 == 0 ? '|' : '.');
			Node node = tree.getNode(j);
			Node left = node.getLeft();
			Node right = node.getRight();
			long ikt = ik[node.getNr()][k];
			if (ikt < 0) {
				ikt = 0;
				for (int i = 0; i <= k; i++) {
					ikt += extended(left, i, ek) * extended(right, k-i, ek);
				}					
				ik[node.getNr()][k] = ikt;
			}
			count += ikt;
		}
    	if (count < 0) {
    		throw new MathException("Underflow encountered! Count > " + Long.MAX_VALUE + " ");
    	}
		return count;
	}

	/** extended method from Algorithm 2, but with ek cache for extended independent node set of size k for subtree under node **/
	private long extended(Node node, int k, long [][] ek) {
		if (k == 0) {
			return 1;
		}
		int n = node.getLeafNodeCount();
		if (k > 0 && n == 1) {
			return 0;
		}
		Node left = node.getLeft();
		Node right = node.getRight();
		long ekt = ek[node.getNr()][k];
		if (ekt < 0) {
			ekt = 0;
			for (int i = 0; i <= k; i++) {
				ekt += extended(left, i, ek) * extended(right, k-i, ek);
			}
			ekt = ekt + extended(left, k-1, ek) + extended(right, k-1, ek);
			ek[node.getNr()][k] = ekt;
		}
		return ekt;
	}
\end{lstlisting}

\subsection{Number of Dollo-$k$ characters for the fully balanced tree of height seven and the caterpillar tree on 128 leaves} \label{subsec_dollocounts}
\begin{figure}
    \centering
    \includegraphics[scale=0.6]{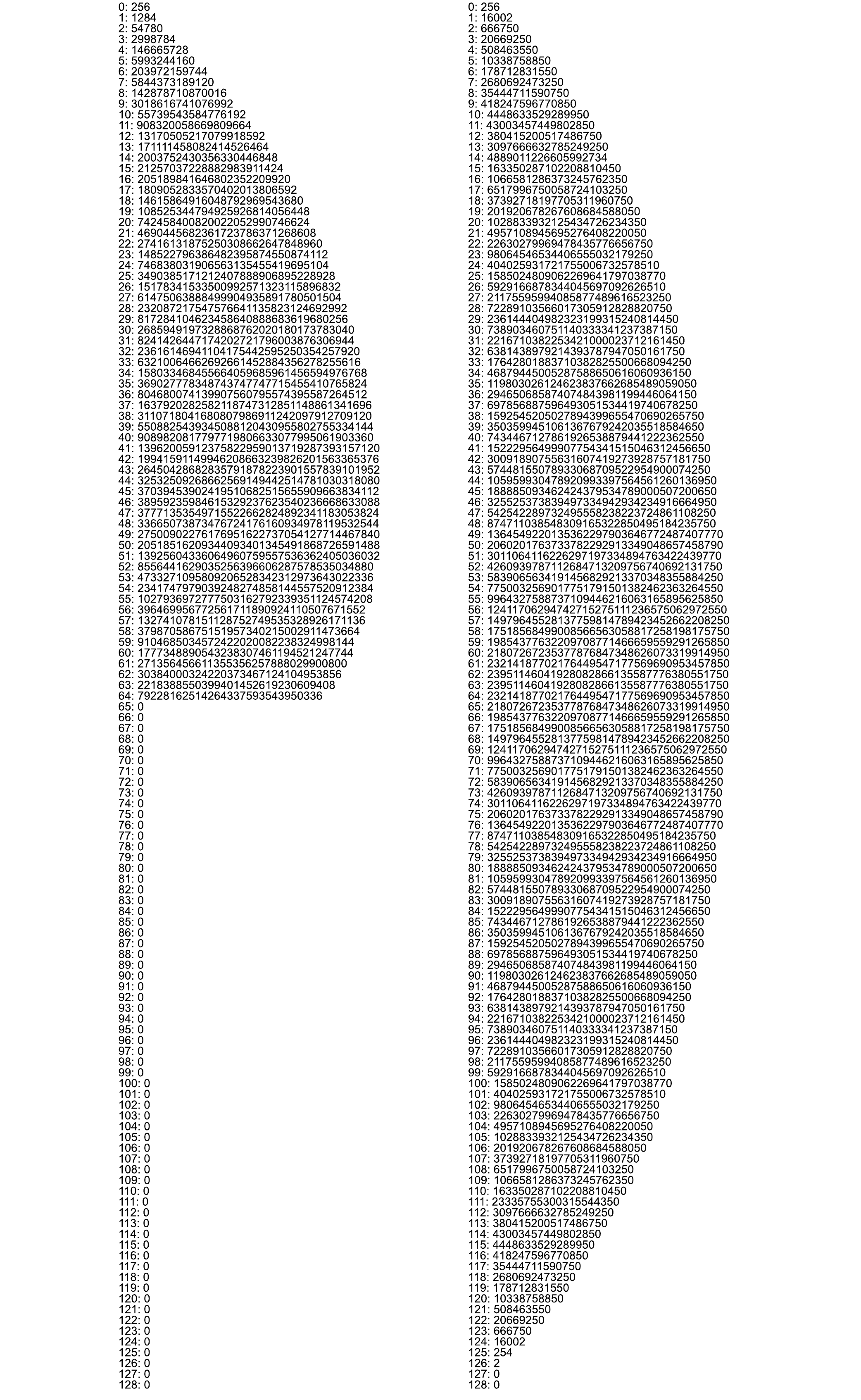}
    \caption{Number of Dollo-$k$ characters for $k=0, \ldots, 128$ for the fully balanced tree of height 7 (left) and the caterpillar tree on 128 leaves (right). The obvious cut-off in the left figure, which is due to the 0's for all values larger than 64, is explained by Proposition \ref{Prop_MaxK_Balanced}.}
    \label{Fig_c128}
\end{figure}

\end{document}